\newcommand{\thickhline}{%
    \noalign {\ifnum 0=`}\fi \hrule height 1pt
    \futurelet \reserved@a \@xhline
}
\newtheorem{theorem}{Theorem}[section]
\newtheorem{lemma}{Lemma}[section]
\newtheorem{proposition}{Proposition}[section]
\theoremstyle{definition}
\newtheorem{definition}{Definition}[section]
\newtheorem{example}{Example}[section]
\numberwithin{equation}{section}
\begin{document}
\setcounter{page}{1}
\title[A Quasi-Newton Method for Uncertain Multiobjective Optimization Problems]
{A Quasi-Newton Method to Solve Uncertain Multiobjective Optimization Problems with Uncertainty Set of Finite Cardinality}
\author[K. Gupta, D. Ghosh, C. Tammer, X. Zhao, J. C. Yao]{Krishna Gupta$^1$, Debdas Ghosh$^{1}$, Christiane Tammer$^2$, Xiaopeng Zhao$^{3,*}$, Jen-Chih Yao$^{4,5}$}
\maketitle
\vspace*{-0.6cm}
\begin{center}
{\footnotesize {\it
$^1$Department of Mathematical Sciences, Indian Institute of Technology (BHU), Varanasi, UP, 221005, India\\
$^2$Department of Mathematics, Martin-Luther-University Halle-Wittenberg, 06099, Halle (Saale), Germany\\
$^3$School of Mathematical Sciences, Tiangong University, Tianjin, 300387, China \\ 
$^4$Center for General Education, China Medical University, Taichung, 40402, Taiwan\\ 
$^5$Academy of Romanian Scientists, Bucharest, 50044, Romania
}}\end{center}

\renewcommand{\thefootnote}{}
\footnotetext{ $^*$Corresponding author.
\par
E-mail addresses: krishnagupta.rs.mat23@itbhu.ac.in (K. Gupta), debdas.mat@iitbhu.ac.in (D. Ghosh), \\ christiane.tammer@mathematik.uni-halle.de (C. Tammer), 
zhaoxiaopeng.2007@163.com (X. Zhao), \\ 
yaojc@mail.cmu.edu.tw (J. C. Yao) \\ 
Received 15 August 2024; Accepted 1 February 2025. 
\par
}

\vskip 4mm {\small\noindent {\bf Abstract.}
In this article, we derive an iterative scheme through a quasi-Newton technique to capture robust weakly efficient points of uncertain multiobjective optimization problems under the upper set less relation. It is assumed that the set of uncertainty scenarios of the problems being analyzed is of finite cardinality. We also assume that corresponding to each given uncertain scenario from the uncertainty set, the objective function of the problem is twice continuously differentiable. In the proposed iterative scheme, at any iterate, by applying the \emph{partition set} concept from set-valued optimization, we formulate an iterate-wise class of vector optimization problems to determine a descent direction. To evaluate this descent direction at the current iterate, we employ one iteration of the quasi-Newton scheme for vector optimization on the formulated class of vector optimization problems. As this class of vector optimization problems differs iterate-wise, the proposed quasi-Newton scheme is not a straight extension of the quasi-Newton method for vector optimization problems. Under commonly used assumptions, any limit point of a sequence generated by the proposed quasi-Newton technique is found to be a robust weakly efficient point of the problem. We analyze the well-definedness and global convergence of the proposed iterative scheme based on a regularity assumption on stationary points. Under the uniform continuity of the Hessian approximation function, we demonstrate a local superlinear convergence of the method. Finally, numerical examples are presented to demonstrate the effectiveness of the proposed method. \\

\noindent {\bf Keywords.}
Uncertain optimization; Set optimization; Quasi-Newton method;   Gerstewitz function;  Upper set less relation; Partition set; Regular point.}

\section{Introduction}
Uncertain multiobjective optimization problems (UMOPs) encompass a category of optimization problems where multiple uncertain objectives of a conflicting nature must be addressed. In many practical optimization problems, the objectives are inherently uncertain due to incomplete information, imperfect data, unpredictable factors, or uncertain predictions \cite{1}. Analysis of the effects of uncertain scenarios in optimization theory serves many crucial purposes, such as enhancing the robustness of solutions, improving decision-making under variability, trade-off analysis and managing risk in practical applications.

The area of uncertain (robust) optimization is growing quickly. For a detailed discussion on fundamental concepts and applications of robust optimization, we refer to exploring the research contributions of Ben-Tal et al. \cite{2}. Ehrgott et al. \cite{3}, Kuroiwa and Lee \cite{4} introduced the concept of minimax robustness for uncertain multiobjective optimization problems, replacing the objective of the uncertain problem with a multiobjective function that accounts for the worst-case outcomes of each objective component. This concept of robustness involves converting an uncertain multiobjective optimization problem into a deterministic one, known as the robust counterpart. Various types of robust counterparts (see \cite{36} and Section $15.4$ in \cite{23}) can be derived for uncertain multiobjective optimization problems. Approaches like the minimax strategy and the objective-wise worst-case robust counterpart \cite{3} are used to address uncertainty by creating a deterministic formulation. These deterministic problems can be addressed using scalarization techniques, such as the popular normal-boundary intersection or the weighted sum method. For more information on these techniques, refer to Ehrgott \cite{5}. In \cite{6,7}, various definitions of solution concepts for UMOPs were proposed and reported based on several set less relations \cite{8}. For additional insights, the references \cite{9,10,11,12} offer detailed information on these developments, and for a new robust counterpart through aggregation operator, see recently developed ``generalized ordered weighted aggregation robustness'' in \cite{13,14}.

Regarding numerical methods for solving UMOPs, the authors in \cite{15,16} effectively tackled UMOPs with a finite uncertainty set. To convert an uncertain optimization problem into a deterministic one, the idea of an objective-wise robust counterpart has been utilized in \cite{15,16}. The set of efficient solutions derived from the objective-wise robust counterpart is a significantly smaller subset of the complete set of robust optimal solutions for UMOPs. In order to identify all robust weakly minimal solutions of UMOPs, recently Ghosh et al. \cite{17} proposed a set-valued optimization view-point of UMOPs instead of just using objective-wise robust counterpart. In \cite{17}, a Newton method is proposed for solving UMOPs. However, in \cite{17}, the objective function for each uncertain scenario is assumed to be locally strong convex, which is restrictive. So, in this study, we focus on developing a quasi-Newton scheme to solve UMOPs without any convexity assumption of the objective. By transforming the considered UMOPs into their min-max counterparts, problems are observed as optimization problems with set-valued maps under the set-ordering relation given by upper less ordering. This reformulated optimization problem with set-valued objectives is solved using a quasi-Newton scheme, which helps to directly obtain robust weakly efficient solutions for the UMOPs under consideration. To achieve this, we construct a sequence of vector optimization problems (VOPs) for the given UMOP by employing the concept of a partition set, as defined in \cite{18}, to solve the pertaining set optimization problem. Based on the ordering cone of the UMOP we find descent direction at every iterate by applying quasi-Newton approach to the constructed sequence of VOPs. This approach aims to find sequences that converge to weakly robust efficient points of the considered UMOPs by systematically analyzing and partitioning the maximal elements within the uncertainty set of objective values across scenarios.

The entire study is organized as follows: In Section \ref{section:2}, we provide notations, definitions, and preliminary results that are used throughout the paper. Section \ref{section 3} reports results on optimality conditions for set-valued optimization, which are later connected with robust weakly minimal solutions of UMOPs under consideration. With the help of these optimality conditions, Section \ref{section:4} derives a quasi-Newton scheme for UMOPs. We establish the well-definedness of the proposed algorithm, supported by the existence of the Armijo step length and detailed convergence analysis. Section \ref{section:5} provides numerical illustrations of the proposed method. Finally, the study is concluded in Section \ref{section 6} by highlighting the key findings and suggesting potential directions for subsequent research.

\section{Notation and Preliminaries}\label{section:2}

The set of all nonempty subsets of $\mathbb{R}^m$ is represented as $P(\mathbb{R}^m)$. The notation $\lVert \cdot \rVert$ denotes either the Euclidean norm for vectors or the spectral norm for matrices, depending on the context. The cardinality of a finite set $A$ is denoted by $|A|$. For any $p\in\mathbb{N}$, we denote  $[p]:= \{1,2,\ldots,p\}$. The notations $\mathcal{N}[\bar{x}, \delta]$ and $\mathcal{N}(\bar{x}, \delta)$, respectively, represent the closed and open balls centred at $\bar{x}\in \mathbb{R}^{n}$ with radius $\delta$. We denote  
\begin{align*}
\mathbb{R}^m_\geqq:= &~ \{z\in\mathbb{R}^m:z\geqq0\}, \text{ where } z\geqq0 \text{ means no component of } z \text{ is negative}, \\
\mathbb{R}^m_\geq:= &~\{z\in\mathbb{R}^m:z\geq0\}, \text{ where } z\geq0 \text{ means } z\geqq0 \text{ but } z\neq0, \\
\text{ and } \mathbb{R}^m_>:= &~ \{z\in\mathbb{R}^m:z>0\}, \text{ where } z>0 \text{ means all components of $z$ are positive}. 
\end{align*}

A nonempty set $K\in P(\mathbb{R}^m)$ is called a cone if $\tau z\in K$ for every $z\in K$ and $\tau\geq 0$. A cone $K$ is called pointed if $K\cap(-K) =\{0\}$, and 
solid if its interior, denoted by $\text{int}(K)$, is nonempty.  

The dual cone of a cone $K$ is the set 
\begin{align*}
K^*:= \{z\in\mathbb{R}^m : z^\top y\geq 0\ \text{for all $y\in K$}\}. 
\end{align*}

The following definitions are well-known in the literature; see \cite{5}, \cite{20}, \cite{23} and references therein.

\begin{definition}
If $K$ is a pointed, closed and convex cone, then it induces a partial order $\preceq$ on $\mathbb{R}^m$ defined as follows: For $y, z \in \mathbb{R}^m$, 
\begin{align*}
z\preceq y \iff y-z\in K.
\end{align*}
In addition, if $K$ is a solid cone, then $K$ induces a strict order $\prec$ on $\mathbb{R}^m$: 
\begin{align*}
z\prec y \iff y-z\in \text{int}(K).
\end{align*}

\end{definition}

A deterministic vector optimization problem is given by
\begin{equation}\label{Pro} 
\underset{x\in S\subseteq \mathbb{R}^n}{\text{min}}~ f(x),
\end{equation}
where $S$ is a nonempty subset of $\mathbb{R}^n$, the function $f:S\to\mathbb{R}^m$ is given by 
$$f(x):= (f_1(x), f_2(x),\ldots, f_m(x))^\top,$$ 
and the optimality concept is given by the following definition with respect to a pointed, closed and convex cone $K$.

\begin{definition}\label{de-effi 2.2}
For the optimization problem \eqref{Pro}, we categorize a point $\bar{x}\in S$ as 
\begin{enumerate}[(i)]
\item efficient if there is no $x\in S$ for which  $f(x)\preceq f(\bar{x})$, and 
\item weakly efficient if for no $x\in S$ we have  $f(x)\prec f(\bar{x})$.
\end{enumerate}  
\end{definition}

In this paper, we consider an UVOP as a family of parameterized optimization problems: 
\begin{align*}
\mathcal{P}(U):=\{\mathcal{P}(\xi):\xi\in U\},
\end{align*}
where for any $\xi\in U$, the  parameterized problem is
\begin{equation}\tag{$\mathcal{P(\xi)}$} \label{Pxi}\underset{x\in S\subseteq \mathbb{R}^n}{\text{min}} F(x,\xi):= \big(f_1(x, \xi), f_2(x,\xi),\ldots, f_m(x, \xi)\big)^\top,
\end{equation}
where $U$ is a nonempty subset of $\mathbb{R}^r$ and $F:\mathbb{R}^n\times U\to\mathbb{R}^m$ is a vector-valued function. The minimization in \eqref{Pxi} is to be understood in the sense of Definition \ref{de-effi 2.2} with respect to a partial order induced by a proper, closed, convex, pointed, solid cone $K$. The set $U$ represents the collection of uncertain (parametric) scenarios. Importantly, note that corresponding to each $\xi\in U$, the problem \eqref{Pxi} is a deterministic vector optimization problem. \\ 

Throughout this paper, we assume the following for the class of problems $\mathcal{P}(U)$.
\begin{enumerate}[(i)]
\item The uncertainties in the problem are given by a finite set of scenarios $\xi$, which collectively form the uncertainty set $U\subseteq\mathbb{R}^r$. Specifically, we denote $ U:= \{\xi_1, \xi_2,\ldots,\xi_p \}.$
\item The feasible set $S$ is assumed to be independent of the uncertainties. The cone $K\subseteq\mathbb{R}^m$ is closed, convex, pointed and solid. Furthermore, let $e\in\text{int}(K)$ be a given element.
\item The vector-valued functions $F(\cdot,\xi_1), F(\cdot,\xi_2),\ldots, F(\cdot,\xi_p):\mathbb{R}^n\to\mathbb{R}^m$ are twice continuously differentiable. 
 \end{enumerate}
For any given $x\in S$, the notation $F_U(x)$ will be used to represent the set $\{F(x,\xi):\xi\in U \}$. It is important to note that for a given $\mathcal{P}(U)$, the mapping $F_U: S\rightrightarrows\mathbb{R}^m$ defines a set-valued map.

In the next definition, we introduce (weakly) robust efficient elements as solutions of a robust counterpart problem to UVOP $\mathcal{P}(U)$ (see \cite{3}).
\begin{definition}\label{De-2.2} 
 For a given UVOP $\mathcal{P}(U)$, a feasible solution $\bar{x}\in S$ is named 
 \begin{enumerate}[(i)]
\item \text{robust efficient} if there is no $x\in S$ such that $F_U(x)\subseteq F_U(\bar{x})-K$, and 
\item weakly robust efficient if there is no $x\in S$ such that $F_U(x) \subset F_U(\bar{x})-\text{int}(K)$. 
\end{enumerate}
\end{definition}

% Next subsection, we recall a few concepts from set optimization, formulate a set-valued optimization problem and explore a relationship between robust efficient points of $P(U)$ and the efficient points of the set-valued optimization problem that has the objective function $F_U$.

Next, we present the concept of maximal and weakly maximal elements of a set with respect to $\preceq$.
\begin{definition}
Let $C\in P(\mathbb{R}^m)$ and $K$ be a pointed, closed, convex and solid cone.
 \begin{enumerate}[(i)]
\item  The set of maximal elements of $C$ with respect to $K$ is defined as 
\begin{align*}
\text{Max}(C,K) := \{z\in C: (z+K)\cap C = \{z\}\}.
\end{align*}
\item  The set of weakly maximal elements of $C$ with respect to $K$ is defined as  
\begin{align*}
\text{WMax}(C,K) := \{z\in C: (z+\text{int}(K)) \cap C= \emptyset\}.
\end{align*}
 \end{enumerate}
\end{definition}

In the paper \cite{36}, it is shown that the solution concept in Definition \ref{De-2.2} can be formulated using the upper set less relation introduced by Kuroiwa in \cite{37,38,39}. Employing other set less relations (see\cite{23}), it is possible to obtain other concepts of robustness as discussed in \cite{36}.
\begin{definition}\label{De-2.5}\cite{37,38,39}
For a given pointed, closed, convex and solid cone $K\subset\mathbb{R}^m$, the upper set less relation $\preceq^u$ is a binary relation defined on $ {P}(\mathbb{R}^m)$ as follows:
\begin{align*}
    \forall~~ C, D \in  {P}(\mathbb{R}^m) : C \preceq^u D \iff C \subseteq D -K.
\end{align*}
Similarly, if $K$ is solid, the strict upper set less relation $\prec^u$ is a binary relation defined on $ {P}(\mathbb{R}^m)$ by
\begin{align*}
    \forall ~~ C, D \in  {P}(\mathbb{R}^m) : C \prec^u D \iff C \subseteq D - \text{int}(K).
\end{align*}
\end{definition}
Obviously, the (strict) upper set less relation $\preceq^u$ is involved in the formulation of the solution concept in Definition \ref{De-2.2}.

\begin{proposition}\label{2.1}\cite{17}
Let $C\in P(\mathbb{R}^m)$ be compact, and $K$ be a closed, convex and pointed cone. Then, we have 
\begin{align*}
    C-K = \emph{Max}(C,K)-K.
\end{align*}
\end{proposition}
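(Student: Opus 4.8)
The plan is to prove the two inclusions separately, with the nontrivial content residing in a domination property for compact sets. The inclusion $\text{Max}(C,K)-K \subseteq C-K$ is immediate, since by definition $\text{Max}(C,K)\subseteq C$, and if $A\subseteq B$ then $A-K\subseteq B-K$. Thus the whole task reduces to establishing the reverse inclusion $C-K \subseteq \text{Max}(C,K)-K$.

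To handle the reverse inclusion, I would first isolate the following domination claim: for every $c\in C$ there exists a maximal element $z\in \text{Max}(C,K)$ with $z-c\in K$ (equivalently $c\preceq z$). Granting this, fix an arbitrary $x\in C-K$ and write $x=c-k$ with $c\in C$ and $k\in K$. Choosing $z\in\text{Max}(C,K)$ as in the claim, I would rewrite $x=z-\big((z-c)+k\big)$; since $z-c\in K$, $k\in K$, and $K$ is a convex cone (hence closed under addition), the element $(z-c)+k$ belongs to $K$. Therefore $x\in\text{Max}(C,K)-K$, which closes the inclusion.

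The heart of the argument is the domination claim, and this is where I expect the main obstacle to lie. The key idea is to scalarize using a strictly positive functional. Because $K$ is closed, convex and pointed, its dual cone $K^*$ is solid, so I may select $\ell\in\text{int}(K^*)$; a short argument shows that such an $\ell$ satisfies $\ell^\top k>0$ for every $k\in K\setminus\{0\}$. For the fixed $c\in C$, consider the set $A:=C\cap(c+K)$, which is nonempty (it contains $c$) and compact, being the intersection of the compact set $C$ with the closed set $c+K$. Maximizing the continuous linear functional $\ell$ over $A$ then yields a maximizer $z\in A$, again by compactness.

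It remains to verify that this maximizer $z$ actually lies in $\text{Max}(C,K)$. Suppose, to the contrary, that $z\notin\text{Max}(C,K)$; then there is $w\in C$ with $w-z\in K\setminus\{0\}$. Strict positivity of $\ell$ gives $\ell^\top w>\ell^\top z$. Moreover, $w=z+(w-z)\in(c+K)+K\subseteq c+K$ because $z\in c+K$ and $K+K\subseteq K$; together with $w\in C$ this shows $w\in A$, contradicting the maximality of $z$ on $A$. Hence $z\in\text{Max}(C,K)$, and since $z\in c+K$ we have $z-c\in K$, completing the domination claim and therefore the proposition. The only delicate points are the existence and strict positivity of $\ell$ (where pointedness of $K$ is used essentially) and the stability of $c+K$ under addition of elements of $K$, both of which are routine.
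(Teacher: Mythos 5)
Your proposal is correct. Note that the paper itself offers no proof of this proposition: it is quoted verbatim from reference \cite{17}, so there is no in-paper argument to compare against. Your proof is a complete and standard one: the equality reduces, exactly as you say, to the domination property of compact sets (every $c\in C$ satisfies $c\preceq z$ for some $z\in\text{Max}(C,K)$), and your scalarization argument establishes it rigorously. The two ingredients you flag as delicate are indeed the load-bearing ones, and both are sound: pointedness of the closed convex cone $K$ is equivalent to solidity of $K^*$ (otherwise $K^*$ lies in a hyperplane $\{x:x^\top v=0\}$, forcing $\pm v\in K^{**}=K$), and any $\ell\in\text{int}(K^*)$ is strictly positive on $K\setminus\{0\}$. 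Your verification that the maximizer $z$ of $\ell$ over $A=C\cap(c+K)$ is maximal in all of $C$ (not merely in $A$) is the step most often fumbled, and you handle it correctly by observing that any dominator $w$ of $z$ automatically lies back in $A$ via $K+K\subseteq K$. One observation worth retaining: compactness of $C$ is used only to guarantee the maximizer exists, which is precisely where the proposition fails for general sets, since $\text{Max}(C,K)$ can then be empty.
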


The Gerstewitz scalarizing function will be key in deriving the main results of this study. This function is frequently used in vector optimization problems to convert them into scalar problems. In the following definition, we consider a special case of the Gerstewitz function (see Chapter $5$ in \cite{23}).
\begin{definition}
 Let $K$ be a closed, convex, pointed, and solid cone. For a given element $e\in\text{int}(K)$, the Gerstewitz function $\Theta_e:\mathbb{R}^m\to\mathbb{R}$ associated with $e$ and $K$ is defined by 
 \begin{align}\label{Gerste_f}
\Theta_e(z):= \text{min}\{t\in\mathbb{R}:te\in z+K\}.   
 \end{align}
\end{definition}

A few useful properties of the Gerstewitz scalarizing function $\Theta_e$ are provided in the following proposition, see Chapter $5$ in \cite{23} and references therein.

\begin{proposition}\label{2.2}
Let $y,z \in \mathbb{R}^m$. The Gerstewitz function defined in \eqref{Gerste_f} has the following properties.
\begin{enumerate}[(i)]
\item  $\Theta_e$ \text{is sublinear, i.e.}, $\Theta_e(y+z)\leq \Theta_e(y)+\Theta_e(z)$.
\item $\Theta_e$ is Lipschitz continous on $\mathbb{R}^m$.
\item $\Theta_e$ is monotone, i.e., 
\begin{align*}
\forall~~ y,z \in \mathbb{R}^m: z\preceq y\implies \Theta_e(z)\leq\Theta_e(y)\\
and~\forall~~ y,z \in \mathbb{R}^m: z\prec y\implies \Theta_e(z)<\Theta_e(y).
\end{align*}
\item $\Theta_e$ has the representability property, \emph{i.e.},
\begin{align*}
-~K= \{z\in\mathbb{R}^m:\Theta_e(z)\leq 0\}\\
and~ -\emph{int}(K)=\{z\in\mathbb{R}^m: \Theta_e(z)<0\}.
\end{align*}
\end{enumerate}  
\end{proposition}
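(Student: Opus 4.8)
The plan is to first confirm that $\Theta_e$ is well-defined (i.e., the minimum in \eqref{Gerste_f} is attained and finite) and then to extract properties (i), (iii), and (iv) almost directly from elementary cone arithmetic, reserving the Lipschitz estimate (ii) as the only step that requires genuine quantitative use of the interiority of $e$.

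For well-definedness, I would fix $z\in\mathbb{R}^m$ and study the set $T(z):=\{t\in\mathbb{R}:te-z\in K\}$. It is nonempty because $e\in\text{int}(K)$: writing $te-z=t(e-z/t)$ and noting $e-z/t\to e\in\text{int}(K)$ as $t\to+\infty$, for large $t$ one has $e-z/t\in K$, hence $te-z\in K$ by the cone property. Boundedness below of $T(z)$ follows from pointedness: if $t_n e-z\in K$ with $t_n\to-\infty$, then dividing by $|t_n|$ and passing to the limit (using closedness of $K$) yields $-e\in K$, which together with $e\in K$ contradicts $K\cap(-K)=\{0\}$ since $e\neq 0$. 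Finally, $T(z)$ is closed because $t\mapsto te-z$ is continuous and $K$ is closed, so the infimum is attained, giving a finite minimizer $t_z$ with $t_z e - z\in K$.

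Properties (i), (iii), and (iv) then reduce to adding and subtracting cone elements. For subadditivity (i), from $\Theta_e(y)e-y\in K$ and $\Theta_e(z)e-z\in K$ I would add the two memberships and use that $K$ is a convex cone to get $(\Theta_e(y)+\Theta_e(z))e-(y+z)\in K$, whence $\Theta_e(y+z)\leq\Theta_e(y)+\Theta_e(z)$. For monotonicity (iii), if $y-z\in K$ then $\Theta_e(y)e-z=(\Theta_e(y)e-y)+(y-z)\in K$, so $\Theta_e(z)\leq\Theta_e(y)$; the strict version replaces $y-z\in\text{int}(K)$ and uses the standard fact $K+\text{int}(K)\subseteq\text{int}(K)$ to land in the interior, after which subtracting a small multiple $\varepsilon e$ keeps the point in $K$ and forces $\Theta_e(z)\leq\Theta_e(y)-\varepsilon$. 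The representability statements (iv) are the two-sided versions of this: $\Theta_e(z)\leq 0$ gives $-z=(\Theta_e(z)e-z)+(-\Theta_e(z))e\in K$ since $-\Theta_e(z)\geq0$, and conversely $-z\in K$ makes $t=0$ feasible; the strict identity is handled identically with $\text{int}(K)$ in place of $K$.

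The main obstacle is the Lipschitz estimate (ii), which is the only place where interiority must be quantified. I would first record the positive homogeneity $\Theta_e(\lambda w)=\lambda\Theta_e(w)$ for $\lambda\geq0$ and combine it with subadditivity (i) to obtain $\Theta_e(y)-\Theta_e(z)\leq\Theta_e(y-z)$ and $\Theta_e(z)-\Theta_e(y)\leq\Theta_e(z-y)$, reducing everything to a one-sided bound on $\Theta_e(w)$. The key quantitative input is that $e\in\text{int}(K)$ yields a radius $\rho>0$ with $e+\rho\,u\in K$ for every unit vector $u$. Then for $w\neq0$ I would check that $t:=\|w\|/\rho$ is feasible, since $te-w=(\|w\|/\rho)\big(e-\rho w/\|w\|\big)\in K$ because $e-\rho w/\|w\|\in K$ and the scalar is nonnegative; this gives $\Theta_e(w)\leq\|w\|/\rho$. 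Applying this to $w=y-z$ and to $w=z-y$ yields $|\Theta_e(y)-\Theta_e(z)|\leq\tfrac{1}{\rho}\|y-z\|$, establishing Lipschitz continuity with constant $1/\rho$. I expect the only delicate points to be the limiting argument for boundedness below in the well-definedness step and the correct extraction of $\rho$ from interiority; the remaining manipulations are routine cone algebra.
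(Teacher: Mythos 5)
Your proof is correct, but note that the paper does not actually prove Proposition \ref{2.2}: it states the properties and refers to Chapter 5 of \cite{23}, where they are established for the Gerstewitz functional in a much more general framework (topological vector spaces, the functional defined via an infimum over a general set rather than a minimum over a cone). What you have written is a self-contained, finite-dimensional substitute for that citation, and its structure is sound. In particular, your attainment argument --- nonemptiness of $T(z)=\{t:te-z\in K\}$ from $e\in\text{int}(K)$, boundedness below from pointedness plus closedness, and closedness of $T(z)$ --- is exactly what justifies writing ``$\min$'' in \eqref{Gerste_f}, a point the paper leaves entirely to the reference; properties (i), (iii), (iv) then follow by the cone arithmetic you describe, and the Lipschitz constant $1/\rho$, with $\rho$ the radius of a closed ball around $e$ contained in $K$, is the standard quantitative use of solidity. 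Two spots would need one more line in a full write-up: the boundedness-below step uses $e\neq 0$, which holds because $0\in\text{int}(K)$ would force $K=\mathbb{R}^m$ and violate pointedness; and the inclusion $-\text{int}(K)\subseteq\{z:\Theta_e(z)<0\}$ in (iv) is not literally ``identical'' to the nonstrict case, since $t=0$ being feasible only gives $\Theta_e(z)\leq 0$ --- you need the same $\varepsilon e$-perturbation you invoked for strict monotonicity. Neither is a structural gap. Your route buys an elementary, reference-free argument with explicit constants; the paper's citation buys generality and brevity, since in \cite{23} these properties hold well beyond the finite-dimensional, cone-induced setting used here.
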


\begin{definition}
Consider the UVOP $\mathcal{P}(U)$ and the associated  set-valued mapping $F_U:\mathbb{R}^n\rightrightarrows\mathbb{R}^m$ defined by  
\begin{align*}
F_U(x):= \{F(x,\xi): \xi \in U\} \mbox{ for all } x \in \mathbb{R}^n.    
\end{align*}
Using the upper set less relation $\preceq^u$, we introduce the following robust counterpart problem (SOP) to the given UVOP $\mathcal{P}(U)$:  
\begin{equation}\tag{SOP}\label{SOP}
~\left \{ \begin{aligned}
&  \min    && F_U(x)\\
&\text {subject to}   && x\in S\subseteq\mathbb{R}^n.
   \end{aligned} \right.  
\end{equation}
The robust counterpart problem \eqref{SOP} to $\mathcal{P}(U)$ is a deterministic set-valued optimization problem and the minimization in \eqref{SOP} is to be understood with respect to the set less relation $\preceq^u$, see Definitions \ref{De-2.2} and \ref{De-2.5}. A point $\bar{x}\in\mathbb{R}^n$ is said to be a local weakly efficient of \eqref{SOP}  if there exists a neighbourhood $\mathcal{N}(\bar x, \delta)$ such that 
\begin{equation*}
\nexists ~x\in \mathcal{N}(\bar x, \delta) : F_U(x)\prec^u  F_U(\bar{x}). 
\end{equation*} 
\end{definition}

In this paper, we propose a quasi-Newton method aimed at finding robust weakly efficient solutions for $\mathcal{P}(U)$ through solving the robust counterpart problem \eqref{SOP}.

\section{Optimality Conditions}\label{section 3}
In this section, we report optimality conditions for robust weakly efficient points of $\mathcal{P}(U)$. First, we start this section by mentioning some important index-related set-valued mapping sets. The following notions are introduced in \cite{18}. 

\begin{definition} \label{Def_active_set}
Consider the UVOP $\mathcal{P}(U)$ and the associated set-valued mapping $F_U:\mathbb{R}^n\rightrightarrows\mathbb{R}^m$ as defined above.  Let $x$ be a given element from $S$. 
\begin{enumerate}[(i)]
\item The active index for maximal elements of $F_U(x)$ is $\mathcal{I}: \mathbb{R}^n\rightrightarrows [p]$,  defined  by 
\begin{align*}
 \mathcal{I}(x):= \{i\in[p]:F(x, \xi_i)\in\text{Max}(F_U(x), K)\}.
\end{align*}
\item The active index for weakly maximal elements of  $F_U(x)$ is $\mathcal{I}_w:\mathbb{R}^n\rightrightarrows [p]$, given by
\begin{align*}
\mathcal{I}_w := \{i\in [p]:F(x, \xi_i)\in\text{WMax}(F_U(x), K)\}.
\end{align*}
\item For a vector $v\in\mathbb{R}^m$, the index set $\mathcal{I}_v:\mathbb{R}^n\rightrightarrows [p]$ is defined as 
\begin{align*}
 \mathcal{I}_v(x):= \{i\in\mathcal{I}(x): F(x, \xi_i)= v \}.  
\end{align*}
\end{enumerate}
\end{definition}

\begin{definition}\label{car_m_e}
The mapping $\omega:\mathbb{R}^n\to\mathbb{N}\cup\{0\}$ represents the cardinality of the set of maximal elements of $F_U$ with respect to $K$, i.e., 
\begin{align*}
\omega (x):= |\text{Max}(F_U(x), K)|. 
\end{align*}
Additionally, for a given $\bar{x}\in\mathbb{R}^n$, we define $\bar{\omega}=\omega (\bar{x})$.
\end{definition}

Next, we give the definition of the partition set of a point $x\in\mathbb{R}^n$ in order to systematically identify weakly maximal points of the \eqref{SOP}.

\begin{definition}
Let at a given $x\in\mathbb{R}^n$, $\{v_1^x, v_2^x,\ldots,v_{\omega(x)}^x\}$ be an enumeration of the set Max$(F(x),K)$. The \emph{partition set} at $x$ is the cartesian product 
\begin{align*}
 P_x := \prod_{j =1}^{\omega}\mathcal{I}_{v_j^x}(x). 
\end{align*}
\end{definition}

\begin{lemma}\label{set_to_vector}\cite{17}
   Consider the problem \eqref{SOP}. Let $\bar x$ be a given element from $S$. Denote the cone $\prod_{j =1}^{\bar{\omega}} K$ by $\Tilde{K}$. Furthermore, for every $\beta :=(\beta_1,\beta_2,\ldots,\beta_{\bar{\omega}})\in P_{\bar{x}}$, define a function $\Tilde{F}(\cdot, \xi_{\beta}): \mathbb{R}^n\times U\to\prod_{j=1}^{\bar{\omega}}\mathbb{R}^m$ by 
\begin{align*}
\Tilde{F}(x, \xi_{\beta}):= \left(F(x, \xi_{\beta_1}), F(x, \xi_{\beta_2}),\ldots, F(x, \xi_{\beta_{\bar{\omega}}})\right)^\top.
\end{align*}
Then, $\bar{x}$ is a local weakly efficient solution for \eqref{SOP} if and only if for every $\beta \in P_{\bar{x}}$, $\bar{x}$ is a local weakly efficient solution of the vector optimization problem 
\begin{equation*}
 \preceq_{\Tilde{K}}-\mathrm{(VOP)}(\xi_\beta) ~~~  \left \{ \begin{aligned}
&  \min    && \Tilde{F}(x,\xi_\beta)\\
& \emph{\text {subject to}}   && x\in\mathbb{R}^n.
   \end{aligned} \right.   
\end{equation*}
\end{lemma}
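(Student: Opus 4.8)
The plan is to unfold both notions of local weak efficiency and reduce the set-relation condition for \eqref{SOP} to finitely many vector inequalities indexed by the maximal elements $\{v_1^{\bar x},\ldots,v_{\bar\omega}^{\bar x}\}=\mathrm{Max}(F_U(\bar x),K)$. The first step is to record the consequence of Proposition \ref{2.1} that, since $F_U(\bar x)$ is finite (hence compact) and $K+\mathrm{int}(K)\subseteq\mathrm{int}(K)$,
\[
F_U(\bar x)-\mathrm{int}(K)=\mathrm{Max}(F_U(\bar x),K)-\mathrm{int}(K)=\bigcup_{j=1}^{\bar\omega}\big(v_j^{\bar x}-\mathrm{int}(K)\big).
\]
Hence $F_U(x)\prec^u F_U(\bar x)$ is equivalent to the requirement that every $F(x,\xi_i)$ lie strictly below some maximal value $v_j^{\bar x}$. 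On the $\mathrm{(VOP)}$ side, because $\mathrm{int}(\Tilde{K})=\prod_{j=1}^{\bar\omega}\mathrm{int}(K)$, the relation $\Tilde{F}(x,\xi_\beta)\prec_{\Tilde{K}}\Tilde{F}(\bar x,\xi_\beta)$ decouples into the $\bar\omega$ strict inequalities $F(x,\xi_{\beta_j})\prec v_j^{\bar x}$, $j\in[\bar\omega]$. Since $P_{\bar x}$ is a finite product of finite index sets, the finitely many neighbourhoods appearing below can be intersected, so the distinction between one global $\delta$ and one $\delta_\beta$ per $\beta$ is harmless. Throughout I would use Proposition \ref{2.2}(iii)--(iv) to rewrite ``$z\prec w$'' as the open scalar condition $\Theta_e(z-w)<0$, which makes the continuity arguments transparent.

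For the implication ``every $\mathrm{(VOP)}$ weakly efficient $\Rightarrow$ \eqref{SOP} weakly efficient'' I would argue by contraposition. If $\bar x$ is not weakly efficient for \eqref{SOP}, pick $x_n\to\bar x$ with $F_U(x_n)\prec^u F_U(\bar x)$. Fix a maximal index $i\in\mathcal I(\bar x)$ with $F(\bar x,\xi_i)=v_j^{\bar x}$. By the reduction above, $F(x_n,\xi_i)\prec v_{k}^{\bar x}$ for some $k=k(n)$. The key observation is that distinct maximal elements are mutually nondominated: for $k\neq j$ one has $v_k^{\bar x}-v_j^{\bar x}\notin K$, since otherwise $v_k^{\bar x}\in(v_j^{\bar x}+K)\cap F_U(\bar x)$ would force $v_k^{\bar x}=v_j^{\bar x}$, contradicting maximality and distinctness. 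As $K$ is closed and $F(x_n,\xi_i)\to v_j^{\bar x}$, for all large $n$ the only admissible choice is $k=j$, so $F(x_n,\xi_i)\prec v_j^{\bar x}=F(\bar x,\xi_i)$. Since this holds for every maximal index, any $\beta\in P_{\bar x}$ satisfies $F(x_n,\xi_{\beta_j})\prec v_j^{\bar x}$ for all $j$, i.e. $\Tilde{F}(x_n,\xi_\beta)\prec_{\Tilde{K}}\Tilde{F}(\bar x,\xi_\beta)$, so $\bar x$ fails to be weakly efficient for $\preceq_{\Tilde{K}}$-$\mathrm{(VOP)}(\xi_\beta)$.

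For the converse ``\eqref{SOP} weakly efficient $\Rightarrow$ every $\mathrm{(VOP)}$ weakly efficient'', again by contraposition, suppose for some $\beta\in P_{\bar x}$ there is $x_n\to\bar x$ with $F(x_n,\xi_{\beta_j})\prec v_j^{\bar x}$ for all $j$. The selected coordinates already lie in $\mathrm{Max}(F_U(\bar x),K)-\mathrm{int}(K)$. To conclude $F_U(x_n)\prec^u F_U(\bar x)$ it suffices, by Proposition \ref{2.1} applied at $x_n$, to show that every maximal element of $F_U(x_n)$ lies in $\mathrm{Max}(F_U(\bar x),K)-\mathrm{int}(K)$, for then $F_U(x_n)\subseteq\mathrm{Max}(F_U(x_n),K)-K\subseteq\mathrm{Max}(F_U(\bar x),K)-\mathrm{int}(K)$. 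I expect \emph{this} to be the main obstacle: I must control the \emph{non-selected} scenarios $i\notin\{\beta_1,\ldots,\beta_{\bar\omega}\}$, whose values $F(x_n,\xi_i)\to F(\bar x,\xi_i)$ may approach the boundary of the dominated set. A non-maximal value satisfies $F(\bar x,\xi_i)\preceq v_k^{\bar x}$ for some $k$; if this domination is \emph{strict}, then $\Theta_e\!\big(F(\bar x,\xi_i)-v_k^{\bar x}\big)<0$ and, by Lipschitz continuity (Proposition \ref{2.2}(ii)), $F(x_n,\xi_i)\prec v_k^{\bar x}$ for large $n$, as required. The delicate case is a value that is only \emph{weakly} maximal, i.e. in $\mathrm{WMax}(F_U(\bar x),K)\setminus\mathrm{Max}(F_U(\bar x),K)$: such a value is dominated only on the boundary of $K$, the strict domination can be destroyed by an arbitrarily small perturbation, and the value may itself become maximal at $x_n$. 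I would therefore isolate the statement that, for $x$ near $\bar x$, every maximal element of $F_U(x)$ is strictly dominated by some $v_k^{\bar x}$; establishing this is the crux, and it is exactly here that the boundary behaviour of weakly maximal values must be confronted.

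Finally I would assemble the two implications, taking for the ``$\Rightarrow$'' part the intersection of the finitely many neighbourhoods attached to the individual $\beta\in P_{\bar x}$ to obtain a single $\delta$, and for the ``$\Leftarrow$'' part the common uniform neighbourhood coming from the continuity estimates. The Gerstewitz function serves as convenient glue throughout: its monotonicity and representability (Proposition \ref{2.2}(iii)--(iv)) turn every occurrence of ``$\prec$'' into a strict scalar inequality, while its Lipschitz continuity converts ``$x$ close to $\bar x$'' into explicit quantitative bounds, which is what makes both the index-matching in the forward direction and the persistence-of-domination in the reverse direction rigorous.
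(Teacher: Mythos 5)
First, a structural note: the paper does not actually prove this lemma --- it is quoted from \cite{17} with no argument given --- so there is no in-paper proof to compare against, and your attempt has to be judged on its own. Your first direction is correct and complete: the identity $F_U(\bar x)-\mathrm{int}(K)=\bigcup_{j}\big(v_j^{\bar x}-\mathrm{int}(K)\big)$, the mutual non-domination of distinct maximal elements, and the closedness-of-$K$ limit argument forcing the dominating index to be $k=j$ are all sound, and together they prove (in strengthened form, ``all $\beta$'' rather than ``some $\beta$'') the implication ``$\bar x$ locally weakly efficient for every $\mathrm{(VOP)}(\xi_\beta)$ $\Rightarrow$ $\bar x$ locally weakly efficient for \eqref{SOP}''.

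The gap is the converse direction, and it is not fillable. You candidly isolate as ``the crux'' the claim that, for $x$ near $\bar x$, every maximal element of $F_U(x)$ is strictly dominated by some $v_k^{\bar x}$; with the definitions as printed in this paper, that claim is false, and so is the implication it was meant to support. Take $n=1$, $m=2$, $K=\mathbb{R}^2_{\geqq}$, $U=\{\xi_1,\xi_2\}$, $F(x,\xi_1):=(x,x)^\top$, $F(x,\xi_2):=(-x,-x)^\top$ (linear, hence smooth), and $\bar x=0$. Then $F_U(0)=\{(0,0)^\top\}$, so $\bar\omega=1$, $v_1^{0}=(0,0)^\top$, $\mathcal{I}_{v_1^{0}}(0)=\{1,2\}$, and $P_{0}=\{1,2\}$. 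The point $0$ is globally weakly efficient for \eqref{SOP}, since $F_U(x)\subseteq F_U(0)-\mathrm{int}(K)$ would force $x<0$ and $-x<0$ simultaneously; yet $0$ is not locally weakly efficient for the vector problem associated with $\beta=1$ (minimize $F(\cdot,\xi_1)$), because $F(x,\xi_1)\prec F(0,\xi_1)$ for every $x<0$, and by symmetry the same holds for $\beta=2$. This is exactly the mechanism you flagged: two scenarios share one maximal value and move in opposite directions, each selected VOP sees only one of them, while the set-valued problem is protected by whichever moves up; note also that switching the quantifier to ``for some $\beta\in P_{\bar x}$'' would not repair the statement. The analogous lemma in \cite{18} is for the \emph{lower} set less relation, where this direction survives because minimality of $v_j$ plus closedness of $K$ force any index whose value drops strictly below $v_j$ near $\bar x$ to lie in $\mathcal{I}_{v_j}(\bar x)$; under $\preceq^u$ the quantifiers are reversed, that forcing argument delivers only your first direction, and nothing controls the unselected indices. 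A correct forward implication needs extra hypotheses at $\bar x$, e.g. $|\mathcal{I}_{v_j^{\bar x}}(\bar x)|=1$ for every $j$ together with $\mathrm{Max}(F_U(\bar x),K)=\mathrm{WMax}(F_U(\bar x),K)$: the latter guarantees every non-maximal value is \emph{strictly} dominated by some maximal one, after which your continuity/Lipschitz argument closes the proof. In short, your direction one is right, but direction two cannot be completed because the statement is false in the generality in which it is quoted.
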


Next, to find a necessary optimality condition for the UVOP $\mathcal{P}(U)$, we recall the concept of stationary point for \eqref{SOP}.

\begin{definition} \cite{18}
 We call $\bar{x}$ as a stationary point for the UVOP $\mathcal{P}(U)$ if there exists a nonempty subset $A\subseteq P_{\bar{x}}$ 
such that the following condition is satisfied:
\begin{align*}
\forall~ \beta \in{A},~ \exists~ \gamma_1, \gamma_2, \ldots, \gamma_{\bar{\omega}} \in K^* : \sum_{j = 1}^{\bar{\omega}} \gradient F(\bar{x}, \xi_{\beta_j}) \gamma_j = 0,~ ~(\gamma_1, \gamma_2, \ldots, \gamma_{\bar{\omega}}) \neq 0.
\end{align*}
\end{definition}

\begin{proposition}  \label{aux_12_01_25_2} \cite{18}
Let $A\subseteq P_{\bar{x}}$ be given. Then, $\bar{x}$ is a stationary point for \eqref{SOP}  with respect to $A$ if and only if 
\begin{align*}
\forall ~\beta\in A, p\in\mathbb{R}^n, \exists~ j\in[\bar{\omega}] \text{ for which } \nabla F(\bar{x}, \xi_{\beta_j})^\top p\notin -\emph{int}(K).
\end{align*}
\end{proposition}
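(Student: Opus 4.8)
The plan is to establish the equivalence pointwise in $\beta$ and then quantify over $A$. Both the stationarity condition (in the definition preceding the statement) and the displayed condition have the syntactic form ``for every $\beta\in A$, a certain property holds,'' so it suffices to fix $\beta\in A$ and prove that the two per-$\beta$ properties coincide. Writing $M_j:=\nabla F(\bar x,\xi_{\beta_j})$ for $j\in[\bar\omega]$, the per-$\beta$ stationarity property asserts that the linear system $\sum_{j=1}^{\bar\omega}M_j\gamma_j=0$ has a solution $(\gamma_1,\dots,\gamma_{\bar\omega})\in (K^*)^{\bar\omega}\setminus\{0\}$, while the contrapositive of the per-$\beta$ displayed property asserts that there is \emph{no} $p\in\mathbb{R}^n$ with $M_j^\top p\in-\text{int}(K)$ for all $j\in[\bar\omega]$. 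Thus the whole statement reduces to a Gordan-type theorem of the alternative over the cone $K$, which I would prove in two directions.

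For the direction ``stationarity $\Rightarrow$ displayed property,'' I argue by contradiction: assume nonzero multipliers $\gamma_j\in K^*$ with $\sum_j M_j\gamma_j=0$ exist and that some $p$ satisfies $M_j^\top p\in-\text{int}(K)$ for every $j$. Pairing with $p$ gives $0=\langle p,\sum_j M_j\gamma_j\rangle=\sum_j\langle M_j^\top p,\gamma_j\rangle$. Since $\gamma_j\in K^*$ each summand is nonpositive, and for any index with $\gamma_j\neq 0$ (at least one exists) the membership $M_j^\top p\in-\text{int}(K)$ forces $\langle M_j^\top p,\gamma_j\rangle<0$. Hence the sum is strictly negative, a contradiction.

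The converse, ``displayed property $\Rightarrow$ stationarity,'' is the substantive part and is where I expect the main work. Assume no $p$ satisfies $M_j^\top p\in-\text{int}(K)$ for all $j$. Introduce the product cone $\Tilde{K}:=\prod_{j=1}^{\bar\omega}K$ from Lemma \ref{set_to_vector}, which is closed, convex, pointed and solid, with $\text{int}(\Tilde{K})=\prod_{j=1}^{\bar\omega}\text{int}(K)$ and dual $\Tilde{K}^*=\prod_{j=1}^{\bar\omega}K^*$. Set $C:=\{(M_1^\top p,\dots,M_{\bar\omega}^\top p):p\in\mathbb{R}^n\}$, a linear subspace of $\mathbb{R}^{m\bar\omega}$. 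The assumption says precisely that $C\cap(-\text{int}(\Tilde{K}))=\emptyset$, so I separate the nonempty open convex set $-\text{int}(\Tilde{K})$ from the subspace $C$: there are $\Gamma=(\gamma_1,\dots,\gamma_{\bar\omega})\neq 0$ and $\alpha\in\mathbb{R}$ with $\langle\Gamma,w\rangle<\alpha\le\langle\Gamma,c\rangle$ for all $w\in-\text{int}(\Tilde{K})$ and $c\in C$. Because $C$ is a subspace, $\Gamma$ must vanish identically on $C$ and consequently $\alpha=0$; this yields $\langle\Gamma,w'\rangle>0$ for all $w'\in\text{int}(\Tilde{K})$, hence $\Gamma\in\Tilde{K}^*=\prod_j K^*$, i.e. each $\gamma_j\in K^*$ with not all $\gamma_j$ zero. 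Finally, vanishing on $C$ reads $\langle\sum_j M_j\gamma_j,p\rangle=0$ for every $p$, that is $\sum_j M_j\gamma_j=0$, which is exactly the per-$\beta$ stationarity condition.

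The main obstacle is the separation step together with its bookkeeping: verifying that the separating functional (i) is nontrivial, (ii) vanishes on the whole subspace $C$ (to deliver $\sum_j M_j\gamma_j=0$ rather than a mere inequality), and (iii) has each block $\gamma_j$ in $K^*$, for which the product-cone identities $\text{int}(\Tilde{K})=\prod_j\text{int}(K)$ and $\Tilde{K}^*=\prod_j K^*$ are needed. If one prefers to avoid the general subspace-separation lemma, the condition $M_j^\top p\in-\text{int}(K)$ can be rewritten as $\Theta_e(M_j^\top p)<0$ via the representability property in Proposition \ref{2.2}(iv) and the alternative derived through a scalarized argument; however, the separation approach above seems the most direct.
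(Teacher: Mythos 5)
Your proof is correct and complete. Note, however, that the paper itself offers no proof of this proposition to compare against: it is stated with the citation \cite{18} and taken as known. Your argument---reducing to a fixed $\beta$, getting the easy direction by pairing the multipliers against $p$ (using that $\langle w,\gamma\rangle>0$ whenever $w\in\text{int}(K)$ and $\gamma\in K^*\setminus\{0\}$), and getting the converse by separating the subspace $C=\{(\nabla F(\bar x,\xi_{\beta_1})^\top p,\ldots,\nabla F(\bar x,\xi_{\beta_{\bar\omega}})^\top p):p\in\mathbb{R}^n\}$ from the open convex cone $-\text{int}(\Tilde{K})$---is the standard Gordan-type theorem of the alternative, which is essentially the route taken in the cited source. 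One minor imprecision: the separation only yields $\alpha\le 0$ rather than $\alpha=0$ outright (since $\Gamma$ vanishes on $C$ and $0\in C$), but this is all that is needed to conclude $\langle\Gamma,w'\rangle>0$ for $w'\in\text{int}(\Tilde{K})$, so the argument goes through unchanged.
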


Note that if $\bar{x}$ is a non-stationary point, then for all $j\in[\bar{\omega}]$, there exist $\beta\in P_{\bar{x}}$ and $p\in\mathbb{R}^n$ such that the following condition holds:
\begin{align*}
\nabla F(\bar{x},\xi_{\beta_j})^\top p\in -\text{int}(K),\\
\text{i.e.}, ~\Theta_e(\nabla F(\bar{x},\xi_{\beta_j})^\top p)<0.
\end{align*}

\begin{definition}\cite{18}
If the following two conditions are satisfied at a point $\bar{x}$, then $\bar x$ is called a regular point of $F_U$ of \eqref{SOP}: 
\begin{enumerate}[(i)]
\item Max$(F_U(\bar{x}), K) = \text{WMax} (F_U(\bar{x}), K)$, i.e., the set of maximal elements is equal to the set of weakly maximal elements. 
\item There exists a $\delta> 0$ such that $\omega(x)$ is constant in $\mathcal{N}(x, \delta)$. 
\end{enumerate}
\end{definition}

\begin{lemma}\cite{18}
If $\bar{x}\in\mathbb{R}^n$ is a regular point of $F_U$, then there exists $\delta > 0$ such that for every $x\in \mathcal{N}(\bar x, \delta)$, we have $P_x\subseteq P_{\bar{x}}$ and $\omega(x)=\bar{\omega}$. 
\end{lemma}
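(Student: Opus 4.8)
The plan is to treat the two assertions separately. The equality $\omega(x)=\bar{\omega}$ is immediate: regularity condition (ii) furnishes a radius $\delta_0>0$ on which $\omega$ is constant, so $\omega(x)=\omega(\bar{x})=\bar{\omega}$ throughout $\mathcal{N}(\bar{x},\delta_0)$. All the work therefore goes into the inclusion $P_x\subseteq P_{\bar{x}}$, and I would first shrink $\delta$ so that, in addition, $\|F(x,\xi_i)-F(\bar{x},\xi_i)\|$ is as small as I like for every one of the finitely many $i\in[p]$ (uniform in $i$ by finiteness of $[p]$ and continuity of each $F(\cdot,\xi_i)$).

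First I would show that the active set can only shrink near $\bar{x}$, namely $\mathcal{I}(x)\subseteq\mathcal{I}(\bar{x})$. Arguing by contraposition, if $i\notin\mathcal{I}(\bar{x})$ then $F(\bar{x},\xi_i)\notin\text{Max}(F_U(\bar{x}),K)=\text{WMax}(F_U(\bar{x}),K)$, where the equality is exactly regularity condition (i); hence some $\xi_k$ gives $F(\bar{x},\xi_k)-F(\bar{x},\xi_i)\in\text{int}(K)$. Since $\text{int}(K)$ is open and $F$ is continuous, the same strict relation persists at every $x$ near $\bar{x}$, so $F(x,\xi_i)\notin\text{WMax}(F_U(x),K)\supseteq\text{Max}(F_U(x),K)$ and thus $i\notin\mathcal{I}(x)$. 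Finiteness of $[p]$ lets me pick one radius that works for all $i\notin\mathcal{I}(\bar{x})$ at once.

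The heart of the argument is to show that the assignment $i\mapsto F(\bar{x},\xi_i)$ carries the value-partition of $\mathcal{I}(x)$ (indices grouped by their value $F(x,\xi_i)$) onto the value-partition of $\text{Max}(F_U(\bar{x}),K)$ bijectively. The key geometric input is that for two \emph{distinct} maximal elements $w,w'$ of $F_U(\bar{x})$ one has $w'-w\notin K$: indeed $w'\in w+K$ would force $w'\in(w+K)\cap F_U(\bar{x})=\{w\}$ by maximality of $w$. As $K$ is closed and there are only finitely many such pairs, their differences stay a fixed positive distance from $K$. Combining this separation with the closeness from the first step gives, for $i,i'\in\mathcal{I}(x)$, that $F(x,\xi_i)=F(x,\xi_{i'})$ forces $F(\bar{x},\xi_i)=F(\bar{x},\xi_{i'})$, so the value-partition at $x$ refines that at $\bar{x}$. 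Since the former has exactly $\omega(x)=\bar{\omega}$ blocks and the latter at most $\bar{\omega}$, the two partitions will coincide as soon as surjectivity is established.

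The remaining, and I expect most delicate, step is this surjectivity: every maximal value $w$ of $F_U(\bar{x})$ must be attained by some index still active at $x$. I would fix $w$, set $M_w=\{i:F(\bar{x},\xi_i)=w\}$, and pick $i^{\ast}\in M_w$ whose value $F(x,\xi_{i^{\ast}})$ is maximal (with respect to $K$, using pointedness) inside the finite set $\{F(x,\xi_i):i\in M_w\}$. One then verifies that $F(x,\xi_{i^{\ast}})$ is in fact maximal in all of $F_U(x)$: no $m\in M_w$ can dominate it by the choice of $i^{\ast}$, while for $m\notin M_w$ the difference $F(x,\xi_m)-F(x,\xi_{i^{\ast}})$ lies near $F(\bar{x},\xi_m)-w\notin K$ and hence stays outside $K$. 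This is precisely where both regularity conditions earn their keep: condition (i) rules out coincidences on $\partial K$ that could let a dominator hide on the boundary, and the constancy of $\omega$ prevents maximal elements from splitting or merging as $x$ moves. With surjectivity in hand the two value-partitions coincide; choosing the enumerations of $\text{Max}(F_U(x),K)$ and $\text{Max}(F_U(\bar{x}),K)$ compatibly with this common partition, every tuple $\beta\in P_x$ satisfies $F(\bar{x},\xi_{\beta_j})=v_j^{\bar{x}}$ with each $\beta_j\in\mathcal{I}(\bar{x})$, that is $\beta\in P_{\bar{x}}$, which yields the desired inclusion.
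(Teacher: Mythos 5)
Your proof is correct, but there is nothing in the paper to compare it against: the paper states this lemma as a quotation from reference \cite{18} and supplies no proof of its own, so your argument has to be judged on its own merits. Judged that way, it is sound. The constancy $\omega(x)=\bar{\omega}$ is indeed just regularity (ii); the inclusion $\mathcal{I}(x)\subseteq\mathcal{I}(\bar{x})$ correctly combines regularity (i) with openness of $\text{int}(K)$ and with $\text{Max}\subseteq\text{WMax}$ (valid because $0\notin\text{int}(K)$ for a pointed cone); and the surjectivity step works: for $m\notin M_w$, maximality of $w$ alone gives $F(\bar{x},\xi_m)-w\notin K$ (otherwise $F(\bar{x},\xi_m)\in(w+K)\cap F_U(\bar{x})=\{w\}$), closedness of $K$ and finiteness of $[p]$ turn this into a uniform separation that survives small perturbations, and your extremal choice of $i^{\ast}$ rules out dominators from within $M_w$. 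The pigeonhole count $\omega(x)=\bar{\omega}$ then upgrades surjectivity of the block map to bijectivity, and the compatible enumeration yields $P_x\subseteq P_{\bar{x}}$; you also rightly recognize that the inclusion only makes sense once the enumerations at $x$ and $\bar{x}$ are aligned this way.

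Two minor criticisms, neither affecting correctness. First, your ``refinement'' step needs none of the cone geometry: if $F(\bar{x},\xi_i)\neq F(\bar{x},\xi_{i'})$, then $F(x,\xi_i)\neq F(x,\xi_{i'})$ for all $x$ near $\bar{x}$ by plain continuity of the finitely many functions $F(\cdot,\xi_i)$, so two distinct blocks at $\bar{x}$ can never merge at nearby $x$; the separation of differences from $K$ is genuinely needed only in the surjectivity step. Second, your closing parenthetical misattributes the role of regularity (i): it is not used in the surjectivity argument at all (as your own computation shows, maximality of $w$ suffices there); it is used exactly once, to obtain $\mathcal{I}(x)\subseteq\mathcal{I}(\bar{x})$, which in turn is what makes the block map land in the partition of $\mathcal{I}(\bar{x})$.
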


\section{Quasi-Newton Method and Its Convergence Analysis}\label{section:4}

For a given initial point $x_0$, for each $i\in [p]$, we construct a sequence $\{B(x_k,\xi_i)\}$ of Hessian approximations of $F(\cdot, \xi_i)$  beginning with an initial matrix $B(x_0,\xi_i)$. For each function $F(\cdot,\xi_i): \mathbb{R}^n \to \mathbb{R}^m$, we use the following quadratic approximation around $x_k$: 
\begin{align*}
q(x,\xi_i):= F(x_k,\xi_i)+\nabla F(x_k,\xi_i)^\top (x-x_k)+\frac{1}{2}(x-x_k)^\top B(x_k,\xi_i)(x-x_k).
\end{align*}
We choose $\{B(x_k,\xi_i)\}$ that satisfies the quasi-Newton equation 
\begin{align*}
    B(x_{k+1}, \xi_i) s_k = y^i_k, ~ k=0,1,2,\ldots,
\end{align*}
where $s_k := x_{k+1}-x_k$ and $y^i_k := \nabla F(x_{k+1}, \xi_i) -\nabla F(x_k, \xi_i)$. To maintain the symmetric and positive definite properties of all terms in the sequence $B(x_k,\xi_i)$, we apply the BFGS update formula starting from a given symmetric and positive definite matrix
$B(x_0,\xi_i)$: 
\begin{align}\label{BFGSF}
 B(x_{k+1}, \xi_i) := B(x_k,\xi_i)- \frac{B(x_k,\xi_i)s_k s_k^\top B(x_k,\xi_i)} {s_k^\top B(x_k,\xi_i)s_k}+ \frac{y^i_k {y^i_k}^\top} {s_k^\top y^i_k}.
\end{align}
From \cite[Section 6.1]{24}, it can be noted that if  $B(x_k,\xi_i)$ is $K$ positive definite, i.e., 
\[p^\top B(x_k, \xi_i) p \in \text{int} (K) \text{ for all } p \in \mathbb{R}^n, \]
then $ B(x_{k+1}, \xi_i)$ will also be $K$ positive definite.

We now present the process to generate a sequence of iterates $\{x_k\}$ starting from an $x_0$, which is built upon the result established in Lemma \ref{set_to_vector}. To proceed from an iterate $x_k$ to the next iterate, we use the usual descent scheme: 
\[x_{k + 1} : = x_k + \tau_k p_k, \]
where $p_k$ is a descent direction of $F_U$ at $x_k$, and $\tau_k>0$. In the following, we formulate a quasi-Newton process to generate $p_k$.

At the current iterate $x_k$, an element $\beta_k$ from the partition set $P_{x_k}$ is selected, and then, a descent direction for (VOP)$(\xi_{\beta_k})$ is found by employing the quasi-Newton method for vector optimization. For a tactful choice of $\beta_k$, we ensure later (in Theorem ) that a decent direction of (VOP)$(\xi_{\beta_k})$ is a descent direction of $F_U$ at $x_k$. The selection of such $\beta_k$ is detailed below. After determining a descent direction, we follow the Armijo-type line search to choose a suitable step size $\tau_k$.

For an appropriate choice of $\beta_k$, define the following parametric family of functionals $\{\varphi_x\}_{x\in\mathbb{R}^n}$, whose elements $\varphi_x: P_x \times \mathbb{R}^n\to \mathbb{R}$ are defined as follows: 
\begin{equation}\label{D-4.1}
\varphi_x(\beta,p) := \underset{j\in[\omega(x)]}{\text{max}}\left\{\Theta_e\left(\nabla F(x,\xi_{\beta_j})^\top p+\frac{1}{2}p^\top B(x,\xi_{\beta_j})p\right)\right\}, ~\beta\in P_x, p\in\mathbb{R}^n. 
\end{equation} 
If for any given  $x\in\mathbb{R}^n$ and $\beta \in P_x$, the matrix $B(x,\xi_{\beta_j})$ is positive definite, then we note that the functional $\varphi_x(\beta,\cdot)$ is strongly convex on $\mathbb{R}^n$ because $\Theta_e$ is sublinear. Therefore, $\varphi_x(\beta,\cdot)$ attains a unique minimum over $\mathbb{R}^n$.

Note that $P_x$ is finite for any $x \in \mathbb{R}^n$. So, $\varphi_x$ attains its minimum over  $P_x\times\mathbb{R}^n$. Hence, the functional $\phi: \mathbb{R}^n\to\mathbb{R}$ given by 
\begin{equation}\label{D-4.2}
    \phi(x) := \underset{(\beta,p)\in P_x\times \mathbb{R}^n}{\text{min}}\varphi_x(\beta,p) 
\end{equation}
is well-defined. In determining a descent direction for $F_U$ at a given nonstationary iterate $x_k$, we choose to employ quasi-Newton method on VOP$(\xi_{\beta_k})$ corresponding to that $\beta_k$ for which we get 
\begin{equation}\label{aux_12_01_25_01}
    (\beta_k, p_k) := \underset{(\beta,p)\in P_{x_k} \times \mathbb{R}^n}{\text{argmin}} \varphi_{x_k}(\beta,p). 
\end{equation}
This is how a $\beta_k$ value is chosen. We show below in Theorem \ref{4.2} that at a nonstationary $x_k$, the $p_k$ determined in \eqref{aux_12_01_25_01} is a descent direction of $F_U$ at $x_k$.  To arrive at this result, we require the following Proposition \ref{p-4.1}. Notice that the formulation of VOP($\beta_k$) is $x_k$-dependent and the direction $p_k$ is obtained from VOP($\xi_{\beta_k}$). So, even if we aim to employ the existing quasi-Newton method for vector optimization \cite{27} on VOP($\xi_{\beta_k}$), the extension is not straightforward because 
VOP($\xi_{\beta_k}$) is determined point-wise, i.e., VOP($\xi_{\beta_k}$) differs once $x_k$ gets differed.

\begin{proposition}\label{p-4.1}
Consider the functions $\varphi_x$ and $\phi$ given in \eqref{D-4.1} and \eqref{D-4.2}, respectively. Suppose $(\bar{\beta},\bar{p})\in P_{\bar{x}}\times \mathbb{R}^n$ satisfies the condition $\phi(\bar{x})=\varphi_{\bar{x}}(\bar{\beta},\bar{p})$. Then, all of the following three statements are equivalent:

\begin{enumerate}[(i)]
\item $\bar{x}$ is a nonstationary point of UVOP $\mathcal{P}(U)$.
\item $\phi(\bar{x})<0$.
\item $\bar{p}\neq0$.
\end{enumerate}
\begin{proof}
$(i)\implies (ii)$. Let $\bar{x}$ be a nonstationary point of the UVOP $\mathcal{P}(U)$. Then, from Proposition \ref{aux_12_01_25_2} we get that for all $j\in[\bar{\omega}]$, there exists a $\beta:= (\beta_1,\beta_2,\ldots,\beta_{\bar{\omega}})\in P_{\bar{x}}$ and $\bar{p}\in\mathbb{R}^n$ with 
\begin{align*}
    \nabla F(\bar{x},\xi_{\beta_j})^\top \bar{p}\in -\text{int}(K), ~\text{i.e.},~ \Theta_e\left(\nabla F(\bar{x},\xi_{\beta_j})^\top \bar{p}\right)<0.
\end{align*}
We notice that 
\allowdisplaybreaks
\begin{align*}
\phi(\bar{x}) = ~&~\underset{(\beta, p)\in P_{\bar{x}}\times\mathbb{R}^n}{\text{min}}\varphi_{\bar{x}}(\beta, p)\\
\leq~&~ \underset{(\beta, p)\in P_{\bar{x}}\times\mathbb{R}^n}{\text{min}}\varphi_{\bar{x}}(\beta, p)\\
\leq~&~\varphi_{\bar{x}}(\beta, t\bar{p}), ~\text{ for some } t>0\\
=~&~\underset{j\in[{\omega(\bar{x})}]}{\text{max}}\big\{\Theta_e\big(\nabla F(\bar{x},\xi_{\beta_j})^\top t \bar{p}+\frac{t}{2} \bar{p}^\top B(\bar{x},\xi_{\beta_j}) t\bar{p}\big)\big\}\\
=~&~t\underset{j\in[{\omega(\bar{x})}]}{\text{max}}\big\{\Theta_e\big(\nabla F(\bar{x},\xi_{\beta_j})^\top \bar{p}+\frac{t}{2} \bar{p}^\top B(\bar{x},\xi_{\beta_j})\bar{p}\big)\big\}, 
 \end{align*}
which is negative for sufficiently small $t>0$.  Consequently, $(ii)$ holds.\\
$(ii)\implies (iii)$. By \eqref{D-4.1}, $\varphi_{\bar{x}}(\beta, 0) = 0$ for any $\beta$ in $P_{\bar{x}}$. Thus, since $\phi(\bar{x})$ is negative, $\bar{p}\neq0$.\\ 
$(iii)\implies (i)$. Assume that $\bar{x}$ is a stationary point. Then, for any $\beta\in P_{\bar{x}}$ and $p\in\mathbb{R}^n$, there exists $j\in[\bar{\omega}]$ such that 
\begin{equation*}
\nabla F(\bar{x}, \xi_{\beta_j})^\top p \not\in -\text{int}(K), ~\text{i.e.}, ~\Theta_e\big(\nabla F(\bar{x}, \xi_{\beta_j})^\top p\big)\geq0.\\
\end{equation*}
Since for all $j\in[\omega(\bar{x})], ~B(\bar{x}, \xi_{\beta_j})$' s are $K$ positive definite matrices, we have  
\begin{align*}
0&\leq \Theta_e\big(\nabla F(\bar{x}, \xi_{\beta_j})^\top p\big)\\
&<\Theta_e\big(\nabla F(\bar{x},\xi_{\beta_j})^\top {p}+\frac{1}{2} {p}^\top B(\bar{x},\xi_{\beta_j}){p}\big)\\
&\leq \underset{j\in[{\omega(\bar{x})}]}{\text{max}}\big\{\Theta_e\big(\nabla F(\bar{x},\xi_{\beta_j})^\top {p}+\frac{1}{2} {p}^\top B(\bar{x},\xi_{\beta_j}){p}\big)\big\}\\
 &=\varphi_{\bar{x}}(\beta, p),   \text{ using }\eqref{D-4.1}\\
\text{i.e.},~ 0~&\leq \varphi_{\bar{x}}(\beta,p) \leq \underset{(\beta,p)\in P_{\bar{x}}\times\mathbb{R}^n}{\text{min}} \varphi_{\bar{x}}(\beta,p)\\
\text{i.e.},~ 0~&\leq\phi(\bar{x})<0, \text{ using }\eqref{D-4.2}.
\end{align*}
This leads to a contradiction, implying that $\bar{x}$ is a nonstationary point for the UVOP $\mathcal{P}(U)$. 
\end{proof}
\end{proposition}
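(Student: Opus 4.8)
The plan is to prove the three equivalences through the cyclic chain $(i)\Rightarrow(ii)\Rightarrow(iii)\Rightarrow(i)$, drawing on the properties of the Gerstewitz function collected in Proposition \ref{2.2} --- positive homogeneity/sublinearity, strict monotonicity, and the representability $-\text{int}(K)=\{z:\Theta_e(z)<0\}$ --- together with the directional characterization of (non)stationarity in Proposition \ref{aux_12_01_25_2} and the remark following it. The definitions \eqref{D-4.1} and \eqref{D-4.2} of $\varphi_x$ and $\phi$, and the $K$-positive definiteness of each $B(\bar x,\xi_{\beta_j})$, will be the structural facts that tie these analytic tools to the min--max form of $\phi$.

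For $(i)\Rightarrow(ii)$ I would first translate nonstationarity, via the remark after Proposition \ref{aux_12_01_25_2}, into the existence of a tuple $\beta\in P_{\bar x}$ and a direction $\bar p$ with $\Theta_e(\nabla F(\bar x,\xi_{\beta_j})^\top\bar p)<0$ for every $j\in[\bar\omega]$. I would then probe $\phi$ along the ray $t\bar p$: since $\phi(\bar x)\le\varphi_{\bar x}(\beta,t\bar p)$ and positive homogeneity lets me factor $t>0$ out of $\Theta_e$, the task reduces to showing $\Theta_e\!\big(\nabla F(\bar x,\xi_{\beta_j})^\top\bar p+\tfrac{t}{2}\bar p^\top B(\bar x,\xi_{\beta_j})\bar p\big)<0$ for small $t$. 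This holds because the argument tends to $\nabla F(\bar x,\xi_{\beta_j})^\top\bar p\in-\text{int}(K)$ as $t\to0^{+}$ and $-\text{int}(K)$ is open; taking the maximum over the finitely many $j$ and multiplying by $t>0$ preserves the sign, yielding $\phi(\bar x)<0$.

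For $(ii)\Rightarrow(iii)$ I would simply record that $\varphi_{\bar x}(\beta,0)=\Theta_e(0)=0$ for every $\beta\in P_{\bar x}$, so that $\bar p=0$ would force the optimal value $\phi(\bar x)=\varphi_{\bar x}(\bar\beta,0)=0$, contradicting $\phi(\bar x)<0$; hence $\bar p\neq0$. For $(iii)\Rightarrow(i)$ I would argue by contraposition, assuming $\bar x$ stationary and deducing $\bar p=0$. Stationarity (in the sense matching the minimization defining $\phi$) yields, through Proposition \ref{aux_12_01_25_2}, that for every $\beta\in P_{\bar x}$ and every $p$ there is an index $j$ with $\Theta_e(\nabla F(\bar x,\xi_{\beta_j})^\top p)\ge0$. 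For $p\neq0$, $K$-positive definiteness places $\tfrac12 p^\top B(\bar x,\xi_{\beta_j})p$ in $\text{int}(K)$, so $\nabla F(\bar x,\xi_{\beta_j})^\top p\prec\nabla F(\bar x,\xi_{\beta_j})^\top p+\tfrac12 p^\top B(\bar x,\xi_{\beta_j})p$; strict monotonicity of $\Theta_e$ then forces $\varphi_{\bar x}(\beta,p)>0$. Combined with $\varphi_{\bar x}(\beta,0)=0$, the minimum over $p$ equals $0$ and is attained only at $p=0$, for every $\beta$, whence $\phi(\bar x)=0$ with minimizer $\bar p=0$.

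The hard part will be Step $(iii)\Rightarrow(i)$, and specifically the quantifier bookkeeping of stationarity: the stationarity condition has to be available for \emph{all} $\beta\in P_{\bar x}$, so that the conclusion $\varphi_{\bar x}(\beta,p)>0$ survives the minimization over the entire partition set in \eqref{D-4.2} rather than holding only for a single witnessing $\beta$; I would make this alignment explicit. The remaining technical ingredient --- that adding the positive-definite quadratic term strictly raises the value of $\Theta_e$ --- is, by contrast, disposed of cleanly by strict monotonicity once $\tfrac12 p^\top B(\bar x,\xi_{\beta_j})p\in\text{int}(K)$ is secured for $p\neq0$.
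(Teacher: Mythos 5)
Your proposal is correct and follows essentially the same route as the paper's proof: the cyclic chain $(i)\Rightarrow(ii)\Rightarrow(iii)\Rightarrow(i)$, with nonstationarity translated via Proposition \ref{aux_12_01_25_2}, the ray $t\bar p$ probed using positive homogeneity of $\Theta_e$, the observation $\varphi_{\bar x}(\beta,0)=0$, and strict monotonicity of $\Theta_e$ against the $K$-positive definite quadratic term. Your contrapositive rendering of $(iii)\Rightarrow(i)$ (stationarity forces the minimum to be $0$, attained only at $p=0$) is in fact a cleaner statement of the paper's contradiction argument, and your explicit flag that the stationarity condition must hold for all $\beta\in P_{\bar x}$ addresses a quantifier point the paper glosses over.
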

\begin{theorem}\label{Th 4.1}
Consider $S$ as a nonempty open subset of $\mathbb{R}^n$. The function $\phi: S\to \mathbb{R}$, introduced in \eqref{D-4.2}, is continuous.
\end{theorem}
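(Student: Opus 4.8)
The plan is to prove continuity at an arbitrary $\bar x\in S$ by establishing lower and upper semicontinuity separately, after first rewriting $\phi$ in a form suited to a parametric–minimization (Berge-type) argument. Put $h_i(x,p):=\Theta_e\big(\nabla F(x,\xi_i)^\top p+\tfrac12 p^\top B(x,\xi_i)p\big)$, so that $\varphi_x(\beta,p)=\max_{j\in[\omega(x)]}h_{\beta_j}(x,p)$. Since each $F(\cdot,\xi_i)$ is twice continuously differentiable, $\nabla F(\cdot,\xi_i)$ is continuous; assuming, as will be required elsewhere, that each $x\mapsto B(x,\xi_i)$ is continuous, the argument of $\Theta_e$ is jointly continuous in $(x,p)$, and since $\Theta_e$ is Lipschitz (Proposition \ref{2.2}(ii)), each $h_i$ and hence each $\varphi_x(\beta,\cdot)$ for a \emph{fixed} index-tuple $\beta$ is jointly continuous in $(x,p)$.

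The first technical step is to compactify the inner minimization. Because each $B(x,\xi_i)$ is $K$-positive definite, $\tfrac12 p^\top B(x,\xi_i)p\in\text{int}(K)$, so by representability (Proposition \ref{2.2}(iv)) and positive homogeneity of $\Theta_e$ one has $\Theta_e(\tfrac12 p^\top B(x,\xi_i)p)=\tfrac{\|p\|^2}{2}\,\Theta_e\big((p/\|p\|)^\top B(x,\xi_i)(p/\|p\|)\big)>0$, with the coefficient bounded below by a positive constant for $x$ ranging over a closed ball $\mathcal{N}[\bar x,\delta_0]$ and $p/\|p\|$ over the unit sphere (continuity plus compactness). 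Absorbing the linear term by subadditivity of $\Theta_e$ gives a uniform coercive estimate $\varphi_x(\beta,p)\ge c\|p\|^2-C\|p\|$ for all $x\in\mathcal{N}[\bar x,\delta_0]$ and all admissible $\beta$; since $\varphi_x(\beta,0)=0$, every minimizer then lies in a fixed closed ball $D=\mathcal{N}[0,R]$, and one may replace $\min_{p\in\mathbb{R}^n}$ by $\min_{p\in D}$ near $\bar x$.

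The decisive issue is controlling the partition set $P_x$, which varies with $x$. Here I would invoke the regularity lemma stated just above: at a regular point $\bar x$ there is $\delta>0$ with $\omega(x)=\bar\omega$ and $P_x\subseteq P_{\bar x}$ for all $x\in\mathcal{N}(\bar x,\delta)$. Thus all admissible tuples have the fixed length $\bar\omega$, and setting $G(x,\beta):=\min_{p\in D}\varphi_x(\beta,p)$ for $\beta\in P_{\bar x}$, each $G(\cdot,\beta)$ is continuous by Berge's maximum theorem. Lower semicontinuity is then immediate from $P_x\subseteq P_{\bar x}$:
\[
\phi(x)=\min_{\beta\in P_x}G(x,\beta)\ \ge\ \min_{\beta\in P_{\bar x}}G(x,\beta)\ \xrightarrow[x\to\bar x]{}\ \min_{\beta\in P_{\bar x}}G(\bar x,\beta)=\phi(\bar x),
\]
the right-hand limit being that of a finite minimum of continuous functions over the \emph{fixed} index set $P_{\bar x}$.

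I expect upper semicontinuity to be the main obstacle. The natural approach is to take $(\bar\beta,\bar p)\in P_{\bar x}\times D$ with $\phi(\bar x)=\varphi_{\bar x}(\bar\beta,\bar p)$ and feed a competitor into $\phi(x)$; but $P_x\subseteq P_{\bar x}$ points the wrong way, so $\bar\beta$ need not lie in $P_x$, and one must instead produce $\beta_x\in P_x$ with $\varphi_x(\beta_x,\bar p)\to\varphi_{\bar x}(\bar\beta,\bar p)$. The difficulty is genuine rather than cosmetic: if a cluster $\mathcal{I}_{v_j^{\bar x}}(\bar x)$ collapses for $x\neq\bar x$ (an index sharing the maximal value $v_j^{\bar x}$ ceasing to be maximal), its replacement carries a different gradient and Hessian approximation, so $h_{\beta_{x,j}}(x,\cdot)$ need not track $h_{\bar\beta_j}(\bar x,\cdot)$, and a jump can appear; one checks on simple examples that, absent local constancy of $\omega$, such a collapse does force a jump, so this step cannot be bypassed. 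This is exactly where the full strength of regularity must be used: the condition $\text{Max}(F_U(\bar x),K)=\text{WMax}(F_U(\bar x),K)$ excludes these boundary degeneracies and stabilizes the active-index clustering near $\bar x$, ensuring that $\bar\beta\in P_x$ (with each slot's representative varying continuously). Granting this, $\phi(x)\le\varphi_x(\bar\beta,\bar p)\to\varphi_{\bar x}(\bar\beta,\bar p)=\phi(\bar x)$, which yields upper semicontinuity and, combined with the previous step, continuity of $\phi$.
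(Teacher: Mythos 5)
Your proof has a genuine gap, and it sits exactly where you predicted: the upper-semicontinuity step. That step rests on the claim that regularity of $\bar x$ forces the minimizing tuple $\bar\beta\in P_{\bar x}$ to remain in $P_x$ for $x$ near $\bar x$, i.e.\ essentially $P_{\bar x}\subseteq P_x$. This is the reverse of what regularity delivers: the lemma quoted from \cite{18} at the end of Section~\ref{section 3} gives only $P_x\subseteq P_{\bar x}$, and the reverse inclusion fails even at regular points. Concretely, take $n=1$, $m=2$, two scenarios, $K=\mathbb{R}^2_{\geqq}$, $e=(1,1)^\top$ (so $\Theta_e(z)=\max\{z_1,z_2\}$), $F(x,\xi_1):=(0,0)^\top$, $F(x,\xi_2):=(x,x)^\top$, and Hessian approximations with $p^\top B(x,\xi_i)p=(p^2,p^2)^\top$, which are constant in $x$ and $K$-positive definite. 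Then $\omega(x)=1$ for every $x$ and $\text{Max}(F_U(0),K)=\text{WMax}(F_U(0),K)=\{(0,0)^\top\}$, so $\bar x=0$ is a regular point. For $x<0$ one has $P_x=\{(1)\}$ and, by \eqref{D-4.1}--\eqref{D-4.2}, $\phi(x)=\min_p\tfrac12 p^2=0$; at $x=0$ one has $P_0=\{(1),(2)\}$ and $\phi(0)=\min\bigl\{0,\ \min_p\bigl(p+\tfrac12p^2\bigr)\bigr\}=-\tfrac12$. So the argmin tuple at $\bar x=0$ is $\bar\beta=(2)$, yet $\bar\beta\notin P_x$ for every $x<0$, and $\phi$ jumps: $\lim_{x\uparrow 0}\phi(x)=0\neq-\tfrac12=\phi(0)$. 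Hence the ``stabilization of the active-index clustering'' you attribute to the condition $\text{Max}=\text{WMax}$ does not hold, and upper semicontinuity genuinely fails; note also that $\omega$ is locally constant in this example, so local constancy of $\omega$ (your diagnosed culprit) is not the missing ingredient either --- what would be needed is local constancy of $P_x$ itself. Your lower-semicontinuity half (coercivity, Berge's theorem for fixed $\beta$, and $P_x\subseteq P_{\bar x}$) is correct, granted regularity and continuity of $x\mapsto B(x,\xi_i)$.

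Two remarks on the comparison with the paper. First, Theorem~\ref{Th 4.1} asserts continuity of $\phi$ on all of $S$ with no regularity hypothesis, whereas your entire argument is localized at a regular point; so even if the upper-semicontinuity step were valid, you would have proved a strictly weaker statement. Second, the paper gives no argument at all --- its proof is a one-line deferral to Theorem 4.1 of \cite{17} --- so there is no internal proof to measure yours against; but the example above satisfies every standing assumption of the paper (finitely many twice continuously differentiable objectives, closed convex pointed solid cone, continuous symmetric positive definite $B$), which indicates that the difficulty you isolated is not a defect of your approach but an obstruction to the statement as written. What your Berge-type argument does establish, and what would suffice for the uses of Theorem~\ref{Th 4.1} later in the paper, is continuity of the frozen-index function $x\mapsto\min_{(\beta,p)\in P_{\bar x}\times\mathbb{R}^n}\varphi_x(\beta,p)$ for a fixed tuple set $P_{\bar x}$.
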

\begin{proof}
The proof is similar to Theorem $4.1$ in \cite{17}.
\end{proof}
Next, we propose a quasi-Newton scheme to solve the robust counterpart problem to $\mathcal{P}(U)$.

\clearpage
\begin{algorithm}[ht]
\caption{ Quasi-Newton method to solve the robust counterpart problem to UMOP $\mathcal{P}(U)$}\label{QNM_algoritm}
\begin{algorithmic}
\Statex {$\textbf{Step 0:}$ Choose $x_0 \in \mathbb{R}^n,~ \gamma \in (0,1)$, and symmetric positive definite matrix $B(x_0,\xi_i)\in\mathbb{R}^{n\times n}$, for each $i\in[p]$. \\ \hspace{0.85cm} Set $k := 0$. Define ${N}:= \{\frac{1}{2^n} : n = 0, 1, 2, \ldots\}.$ }
\Statex {\textbf{Step 1:} Compute $ 
    M_k:= \text{Max}(F_U(x_k), K), ~~P_k:=P_{x_k}, \text{ and } \omega_k:=|\text{Max}(F_U(x_k), K)|. $}
\Statex {\textbf{Step 2:} Find
\begin{align*}
    (\beta^k, p_k) := \underset{(\beta, p) \in P_k \times \mathbb{R}^n}{\text{argmin}}~ \underset{j \in [\omega_k]}{\max} \big\{\Theta_e \big(\gradient F(x_k, \xi_{\beta_j})^\top p + \frac{1}{2} p^\top B(x_k, \xi_{\beta_j}) p \big) \big\}.
\end{align*}}
\Statex{\textbf{Step 3:} \label{step 3} If $p_k = 0$, terminate the process. Otherwise, proceed to Step 4.}
\Statex{\textbf{Step 4:} Choose $\tau_k$ as the largest $\tau \in{N}$ such that
\begin{align*}
   F(x_k+\tau p_k, \xi_{\beta^k_j}) & \preceq F(x_k, \xi_{\beta^k_j}) + \gamma \tau \big( \gradient F(x_k, \xi_{\beta^k_j})^\top p_k + \frac{1}{2} p_k^\top B(x_k, \xi_{\beta^k_j}) p_k \big) ~~\text{ for all } j \in [\omega_k].
\end{align*}}
\Statex{\textbf{Step 5:} Set $x_{k+1} := x_k + \tau_k p_k$.\\
\hspace{0.87cm} Update $B(x_{k+1}, \xi_i)$ for $i\in[p]$ as in \eqref{BFGSF}. Set $k=k+1$,
and go to \bf{Step 1.}}
\end{algorithmic}     
\end{algorithm}

With regard to the well-definedness of the steps of Algorithm \ref{QNM_algoritm}, we note that if Step 2 and Step 4 are well-defined, then all steps are well-defined. Here, we note the following two points:
\begin{enumerate}[(i)]
\item  For the existence of $(\beta^k,p_k)$ in Step $2$, notice  that at any point $x_k\in\mathbb{R}^n$, the map  $ 
p \mapsto (\gradient F(x_k, \xi_{\beta_j})^\top p + \frac{1}{2} p^\top B(x_k, \xi_{\beta_j}) p )$ is a convex function and $[\omega_k]$ is finite. Therefore, a minimum of $\varphi_{x_k}(\beta, p)$ defined in \eqref{D-4.1} always exists. 
\item Existence of a step length $\tau_k$ in Step $4$ is assured by Theorem \ref{4.2}.
\end{enumerate}
Therefore, Algorithm \ref{QNM_algoritm} is well-defined.

\begin{theorem}\label{4.3}
Let $\{x_k\}$ be a sequence of nonstationary points generated by Algorithm  \ref{QNM_algoritm}, $\{p_k\}$ be the corresponding sequence of directions, and $\{x_k\}$ be convergent. Then, the sequence $\{p_k\}$ is bounded.
\end{theorem}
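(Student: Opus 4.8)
The plan is to argue by contradiction: assume $\{p_k\}$ is unbounded, pass to a subsequence along which $\|p_k\|\to\infty$, and contradict the $K$-positive definiteness of the Hessian approximations at the limit. Write $x_k\to\bar x$ for the assumed limit. First I would reduce to fixed combinatorial data. Since each $\omega_k\le p$ and the selected tuples $\beta^k\in P_{x_k}\subseteq[p]^{\omega_k}$ range over a finite set, I can pass to a subsequence along which $\omega_k\equiv\omega^\ast$ is constant and $\beta^k\equiv\beta^\ast$ is a fixed tuple; in particular each scenario $\xi_{\beta^\ast_j}$, $j\in[\omega^\ast]$, is fixed. By the twice continuous differentiability of $F(\cdot,\xi_{\beta^\ast_j})$ and continuity of the Hessian approximation function, $\nabla F(x_k,\xi_{\beta^\ast_j})\to\nabla F(\bar x,\xi_{\beta^\ast_j})=:\bar g_j$ (hence the gradients stay bounded) and $B(x_k,\xi_{\beta^\ast_j})\to B(\bar x,\xi_{\beta^\ast_j})=:\bar B_j$, where each $\bar B_j$ is $K$-positive definite. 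I would also record that $\phi(x_k)\to\phi(\bar x)$ is finite by continuity of $\phi$ (Theorem \ref{Th 4.1}), so $\{\phi(x_k)\}$ is bounded.

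Next I would exploit the optimality of $(\beta^k,p_k)$ in Step 2. Since $\varphi_{x_k}(\beta^k,p_k)=\phi(x_k)$ is the maximum over $j$, for every $j\in[\omega^\ast]$
\[
\Theta_e\!\left(\nabla F(x_k,\xi_{\beta^\ast_j})^\top p_k+\tfrac12\,p_k^\top B(x_k,\xi_{\beta^\ast_j})p_k\right)\le\phi(x_k).
\]
Set $q_k:=p_k/\|p_k\|$ and pass to a further subsequence with $q_k\to q$, $\|q\|=1$. Using that the Gerstewitz function is positively homogeneous (Proposition \ref{2.2}), divide the displayed inequality by $\|p_k\|^2>0$ to obtain
\[
\Theta_e\!\left(\tfrac{1}{\|p_k\|}\nabla F(x_k,\xi_{\beta^\ast_j})^\top q_k+\tfrac12\,q_k^\top B(x_k,\xi_{\beta^\ast_j})q_k\right)\le\frac{\phi(x_k)}{\|p_k\|^2}.
\]
Letting $k\to\infty$, the first argument tends to $0$ (bounded gradients against $1/\|p_k\|\to0$), the second tends to $\tfrac12\,q^\top\bar B_j q$, and the right-hand side tends to $0$ (bounded numerator, denominator $\to\infty$). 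Continuity of $\Theta_e$ (Proposition \ref{2.2}) then yields $\Theta_e\!\left(\tfrac12\,q^\top\bar B_j q\right)\le 0$ for every $j\in[\omega^\ast]$.

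Finally I would derive the contradiction. Because $\bar B_j$ is $K$-positive definite and $q\neq0$, we have $q^\top\bar B_j q\in\text{int}(K)$; since $K$ is pointed, $0\notin\text{int}(K)$, so $q^\top\bar B_j q\notin -K$, and the representability property (Proposition \ref{2.2}(iv)) gives $\Theta_e(q^\top\bar B_j q)>0$. By positive homogeneity this forces $\Theta_e\!\left(\tfrac12\,q^\top\bar B_j q\right)=\tfrac12\,\Theta_e(q^\top\bar B_j q)>0$, contradicting the inequality just obtained. Hence $\{p_k\}$ is bounded. I expect the main obstacle to be the last two steps taken together: passing the limit through $\Theta_e$ is routine given its continuity, but the contradiction hinges on the limiting matrices $\bar B_j$ being genuinely $K$-positive definite rather than merely semidefinite. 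This is precisely where continuity of the Hessian approximation function $x\mapsto B(x,\xi_i)$ and the preservation of $K$-positive definiteness at the limit point $\bar x$ are indispensable; the combinatorial stabilization of $(\omega_k,\beta^k)$ in the first step, while elementary, is also needed so that the convergence statements for $\nabla F$ and $B$ refer to fixed scenarios.
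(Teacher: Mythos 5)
Your overall strategy (stabilize $(\omega_k,\beta^k)$ along a subsequence, normalize $q_k:=p_k/\lVert p_k\rVert$, divide the Step 2 optimality inequality by $\lVert p_k\rVert^2$ using positive homogeneity of $\Theta_e$, and pass to the limit) is coherent, and most individual steps are fine. The genuine gap is the step you yourself flag as the crux: the assertion that $B(x_k,\xi_{\beta^\ast_j})\to B(\bar x,\xi_{\beta^\ast_j})=:\bar B_j$ with $\bar B_j$ $K$-positive definite. Theorem \ref{4.3} carries no hypothesis of a continuous Hessian-approximation function: the matrices $B(x_k,\xi_i)$ are produced by the history-dependent BFGS recursion \eqref{BFGSF}, so ``$B(\bar x,\xi_i)$'' is not even a defined object ($\bar x$ is in general not an iterate), and nothing in the hypotheses (a convergent sequence of nonstationary iterates) forces $\{B(x_k,\xi_i)\}_k$ to converge, or even to be bounded. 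More importantly, even along a subsequence where $B(x_k,\xi_{\beta^\ast_j})\to\bar B_j$, the $K$-positive definiteness of each term survives the limit only as $K$-positive \emph{semi}definiteness: you can conclude $q^\top\bar B_j q\in K$ but not $q^\top\bar B_j q\in\operatorname{int}(K)$. If $q^\top\bar B_j q$ lies on the boundary of $K$, then $\Theta_e\big(\tfrac12 q^\top\bar B_j q\big)=0$, which is perfectly consistent with the inequality $\Theta_e\big(\tfrac12 q^\top\bar B_j q\big)\le 0$ that you derived, and no contradiction arises. So the argument fails exactly where strictness is required.

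What a correct proof needs is a uniform lower curvature bound along the sequence, of the type the paper itself imposes later in Theorem \ref{quadratic}(i) (e.g., $aI\le B(x,\xi_j)$, equivalently a bound guaranteeing $p^\top B(x_k,\xi_{\beta_j})p - a\lVert p\rVert^2 e\in K$ for all $k$). With such a bound the theorem follows by a shorter route that never takes limits of the $B$'s: each $\varphi_{x_k}(\beta^k,\cdot)$ is strongly convex with modulus independent of $k$ (this is the observation made after \eqref{D-4.1}, made quantitative by the uniform bound), so comparing the minimizer $p_k$ with $p=0$ gives
\begin{align*}
0=\varphi_{x_k}(\beta^k,0)\;\ge\;\varphi_{x_k}(\beta^k,p_k)+\tfrac a2\lVert p_k\rVert^2=\phi(x_k)+\tfrac a2\lVert p_k\rVert^2,
\qquad\text{i.e.}\qquad
\lVert p_k\rVert^2\le\tfrac2a\,\lvert\phi(x_k)\rvert,
\end{align*}
and the right-hand side is bounded because $\phi$ is continuous (Theorem \ref{Th 4.1}) and $\{x_k\}$ converges. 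The paper's own proof is delegated to \cite{26}, but in any event some uniform control of the Hessian approximations is indispensable; your continuity-plus-limit route cannot be repaired without importing precisely such an assumption, which is not among the hypotheses of the theorem as stated.
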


\begin{proof}
The proof is similar to Theorem $4.4$ in \cite{26}. 
\end{proof}

\begin{theorem}\label{4.2}
Let  $\gamma$ be a fixed value in the interval $(0,1)$, and $\bar{x}$ be not a stationary point of \eqref{SOP}. Let $(\bar{\beta},\bar{p})\in P_{\bar{x}}\times\mathbb{R}^n$ be such that $\phi(\bar{x})=\varphi_{\bar{x}}(\bar{\beta},\bar{p})$. Under these conditions, the following statements are true. 
\begin{enumerate}[(i)]
\item \label{S (i)} There exists $\Tilde{\tau}>0$ such that 
\begin{align*}
\forall \tau\in (0,\Tilde{\tau}), j\in[\bar{\omega}]: F(\bar{x}+\tau \bar{p}, \xi_{\bar{\beta_j}})\preceq F(\bar{x}, \xi_{\bar{\beta_j}})+\gamma \tau \big(\nabla F(\bar{x},\xi_{\bar{\beta}})^\top \bar{p}+\frac{1}{2} \bar{p}^\top B(\bar{x},\xi_{\bar{\beta_j}})\bar{p}\big).
\end{align*}
\item \label{4.2(ii)} Additionally, for all $\tau \in (0,\Tilde{\tau})$ and $j\in[\bar{\omega}]$, we have 
\begin{align*}
F_U(\bar{x}+\tau \bar{p}) ~&\preceq^u \big\{F(\bar{x}, \xi_{\bar{\beta_j}})+\gamma \tau \big(\nabla F(\bar{x},\xi_{\bar{\beta}})^\top \bar{p}+\frac{1}{2} \bar{p}^\top B(\bar{x},\xi_{\bar{\beta_j}})\bar{p}\big)\big\}_{j\in[\bar{\omega}]}\\
~&\prec^u F_U(\bar{x}).
\end{align*}
\end{enumerate}
\begin{proof}
\begin{enumerate}[(i)]
\item Suppose that \eqref{S (i)}  is not satisfied. Therefore, there exists a sequence $\{\tau_k\}$ and $j\in[\bar{\omega}]$ such that $\tau_k\rightarrow 0$ and 
\begin{align}\label{S-3}
\forall~ k\in\mathbb{N}: F(\bar{x}+\tau_k \bar{p}, \xi_{\bar{\beta_j}})-F(\bar{x}, \xi_{\bar{\beta_j}})-\gamma \tau_k \big(\nabla F(\bar{x},\xi_{\bar{\beta}})^\top \bar{p}+\frac{1}{2} \bar{p}^\top B(\bar{x},\xi_{\bar{\beta_j}})\bar{p}\big)\notin -K. 
\end{align}
Multiply by $\frac{1}{\tau_k}$ in \eqref{S-3} for each $k\in\mathbb{N}$ to obtain 
\begin{align}\label{S-4}
\forall~ k\in\mathbb{N}: \frac{F(\bar{x}+\tau_k \bar{p}, \xi_{\bar{\beta_j}})-F(\bar{x}, \xi_{\bar{\beta_j}})}{\tau_k}-\gamma \big(\nabla F(\bar{x},\xi_{\bar{\beta}})^\top \bar{p}+\frac{1}{2} \bar{p}^\top B(\bar{x},\xi_{\bar{\beta_j}})\bar{p}\big)\notin -K.
\end{align}
Now taking the limit $k\rightarrow \infty$ in \eqref{S-4}, we get 
\begin{align}\label{S-5}
\nabla F(\bar{x}, \xi_{\bar{\beta_j}})^\top \bar{p}-\gamma\big(\nabla F(\bar{x},\xi_{\bar{\beta}})^\top \bar{p}+\frac{1}{2} \bar{p}^\top B(\bar{x},\xi_{\bar{\beta_j}})\bar{p}\big)\notin -K.  
\end{align}
Note that $\bar{x}$ is not a stationary point of \eqref{SOP} and $(\bar{\beta},\bar{p})\in P_{\bar{x}}\times \mathbb{R}^n$ is such that $\phi(\bar{x})=\varphi_{\bar{x}}(\bar{\beta}, \bar{p})$. Therefore, by Proposition \ref{p-4.1}, we obtain $\bar{p}\neq 0$ and $\phi(\bar{x})<0$. So, by the continuity of $\phi$, (see Theorem \ref{Th 4.1}) there exists $\delta>0$ such that 
\begin{align}\label{aux_13_01_1}
    \phi(x)\leq \frac{1}{2}\phi(\bar{x}) < 0 \text{ for all } x\in\mathcal{N}[\bar{x}, \delta].
\end{align}

\noindent
Let us consider a sequence $\{p_k\}$ that lies in $\mathcal{N}[\bar{x}, \delta]$, i.e., $\|p_k - \bar x\| \le \delta$, and converges to $\bar p$. By Taylor expansion, we obtain
\begin{align*}
F(\bar x+\tau_k p_k,\xi_{\beta_j})= F(\bar x,\xi_{\beta_j})+\tau_k \nabla F(\bar x,\xi_{\beta_j})^\top p_k+\circ_j(\tau_k p_k,x)e \text{ for all } j\in[\omega(\bar x)],
\end{align*}
where $\underset{k\rightarrow\infty}{\lim}\frac{\circ_j(\tau_k p_k, \bar x)e}{\tau_k\lVert p_k \rVert}= (0,0,\ldots,0)^\top$. Since the sequence $\{p_k\}$ is bounded, we have 
\begin{align*}
\lim_{k\to\infty}\frac{\circ_j(\tau_k p_k,\bar x)e}{\tau_k}= (0,0,\ldots,0)^\top.
\end{align*}
Now, we have $\nabla F(\bar x,\xi_{\beta_j})^\top p_k \preceq\nabla F(\bar x,\xi_{\beta_j})^\top p_k +\frac{1}{2} p_k^\top B(\bar x,\xi_{\beta_j})p_k$ since  $p_k^\top B(\bar x,\xi_{\beta_j})p_k$ is positive definite for all $j\in[\omega(\bar x)]$. \\ 

\noindent
So, if $\{x_k\}$ is a sequence generated be Algorithm \ref{QNM_algoritm} and converges to $\bar x$, then for all $j\in[\omega(\bar x)]$ we have 
\[ 
\resizebox{0.85\hsize}{!}{$F(x_k+\tau_k p_k,\xi_{\beta_j^k})\preceq F(x_k,\xi_{\beta_j^k})+\tau_k\big(\nabla F(x_k,\xi_{\beta_j^k})^\top p_k +\frac{1}{2}p_k^\top B(x_k,\xi_{\beta_j^k})p_k\big)+\circ_j(\tau_k p_k,x_k)e.$}
\]

\noindent
Therefore,
\begin{align*}
&\frac{F(x_k+\tau_k p_k,\xi_{\beta_j^k})-F(x_k,\xi_{\beta_j^k})}{\tau_k}\\
\preceq &~ \gamma\big(\nabla F(x_k,\xi_{\beta_j^k})^\top p_k+\frac{1}{2} p_k^\top ~B(x_k,\xi_{\beta_j^k})p_k\big)\\
&+\big[(1-\gamma)\big(\nabla F(x_k,\xi_{\beta_j^k})^\top~ p_k+\frac{1}{2} p_k^\top B(x_k,\xi_{\beta_j^k})p_k\big)+\frac{\circ_j(\tau_k p_k,x_k)e}{\tau_k}\big].
\end{align*}
Taking limit $k\to\infty$ in the above inequality, we have from \eqref{aux_13_01_1} that 
\begin{align*}
~&~\nabla F(\bar{x}, \xi_{\bar{\beta}_j})^\top \bar{p}\preceq \gamma \big(\nabla F(\bar{x},\xi_{\bar{\beta}_j})^\top\bar{p}+\frac{1}{2}\bar{p}^\top B(\bar{x},\xi_{\bar{\beta}_j})\bar{p}\big),\\
\text{i.e.},~&~\nabla F(\bar{x}, \xi_{\bar{\beta}_j})^\top \bar{p}- \gamma \big(\nabla F(\bar{x},\xi_{\bar{\beta}_j})^\top\bar{p}+\frac{1}{2}\bar{p}^\top B(\bar{x},\xi_{\bar{\beta}_j})\bar{p}\big)\in - \text{int}(K) \text{ for all } j\in[\omega(\bar{x})],
\end{align*}
which is contradictory to \eqref{S-5}. Hence, the statement \eqref{S (i)} is proved.
\item As $\bar{x}$ is not a stationary point, then by Proposition \ref{p-4.1}, we obtain 
\allowdisplaybreaks
\begin{align*}
~&~\phi(\bar{x})<0\\
\implies ~&~\varphi_{\bar{x}}(\bar{\beta}, \bar{p})<0\\
\implies ~&~\underset{j \in [\bar{\omega}]}{\max} \big\{\Theta_e \big(\gradient F(\bar{x}, \xi_{\bar{\beta_j}})^\top \bar{p} + \frac{1}{2} \bar{p}^\top B(\bar{x}, \xi_{\bar{\beta_j}}) \bar{p} \big) \big\}<0\\
\implies ~&~ \text{ for any } j\in[\bar{\omega}],\Theta_e \big(\gradient F(\bar{x}, \xi_{\bar{\beta_j}})^\top \bar{p} + \frac{1}{2} \bar{p}^\top B(\bar{x}, \xi_{\bar{\beta_j}}) \bar{p} \big)<0\\
\implies~&~\gradient F(\bar{x}, \xi_{\bar{\beta_j}})^\top \bar{p} + \frac{1}{2} \bar{p}^\top B(\bar{x}, \xi_{\bar{\beta_j}}) \bar{p}\in -\text{int}(K).
\end{align*}
Using this in inequality (\ref{S (i)}), we have \\
\begin{align}\label{4.7}
\forall \tau\in(0,\Tilde{\tau}), j\in[\bar{\omega}] : F(\bar{x}+\tau \bar{p},\xi_{\bar{\beta_j}})\preceq ~&~F(\bar{x},\xi_{\bar{\beta_j}})+\gamma \tau \big(\gradient F(\bar{x}, \xi_{\bar{\beta_j}})^\top \bar{p} + \frac{1}{2} \bar{p}^\top B(\bar{x}, \xi_{\bar{\beta_j}}) \bar{p}\big)\nonumber\\
\prec ~&~ F(\bar{x},\xi_{\bar{\beta_j}}).
\end{align}
As Max$(F_U(\bar x + \tau \bar p), K) = \{F(\bar x + \tau \bar p, \xi_{\bar \beta_j}): j \in [\bar \omega]\}$, it follows from Proposition \ref{2.1} that 
\begin{align*}
F_U(\bar{x}+\tau \bar{p}) &  \subseteq \left\{F\left(\bar{x}+\tau \bar{p},\xi_{\bar{\beta_j}}\right)\right\}_{j\in[\bar{\omega}]}-K\\
 & \overset{\eqref{4.7}}
 {\subseteq} \left\{F(\bar{x},\xi_{\bar{\beta_j}})+\gamma \tau \left(\gradient F\big(\bar{x}, \xi_{\bar{\beta_j}}\big)^\top \bar{p} + \frac{1}{2} \bar{p}^\top B(\bar{x}, \xi_{\bar{\beta_j}}) \bar{p}\right)\right\}_{j\in[\bar{\omega}]}-K\\
& \subseteq \left\{F(\bar{x}, \xi_{\bar{\beta_j}})\right\}_{j\in[\bar{\omega}]}-\text{int}(K)-K ~\text{ by \text{Statement (\ref{S (i)})}} \\
& \subseteq \big\{F(\bar{x},\xi_{\bar{\beta_1}}),F(\bar{x},\xi_{\bar{\beta_2}}),\ldots, F(\bar{x},\xi_{\bar{\beta}_{\bar{\omega}}})\big\}-\text{int}(K)\\
& \subseteq F_U(\bar{x})-\text{int}(K).
\end{align*}
\end{enumerate}
Hence, the statement \eqref{4.2(ii)} is proved.
\end{proof}
\end{theorem}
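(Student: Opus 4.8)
The plan is to handle the two statements in order: statement (i) produces an Armijo-acceptable step length, and statement (ii) upgrades the componentwise decrease obtained there into a strict decrease of the whole image set under $\prec^u$. Throughout I will abbreviate the quadratic model value by $q_j := \nabla F(\bar{x},\xi_{\bar{\beta}_j})^\top\bar{p}+\tfrac{1}{2}\bar{p}^\top B(\bar{x},\xi_{\bar{\beta}_j})\bar{p}$, and I will repeatedly use that nonstationarity of $\bar{x}$ gives, via Proposition \ref{p-4.1}, both $\bar{p}\neq 0$ and $\phi(\bar{x})<0$, whence $\varphi_{\bar{x}}(\bar{\beta},\bar{p})<0$ and therefore $\Theta_e(q_j)<0$, i.e. $q_j\in-\text{int}(K)$ for every $j\in[\bar{\omega}]$ by the representability in Proposition \ref{2.2}(iv).

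For statement (i) I would argue by contradiction. If no such $\tilde{\tau}$ exists, there is a fixed index $j$ and a sequence $\tau_k\downarrow 0$ along which the Armijo inequality fails, so that $F(\bar{x}+\tau_k\bar{p},\xi_{\bar{\beta}_j})-F(\bar{x},\xi_{\bar{\beta}_j})-\gamma\tau_k q_j\notin -K$ for all $k$. Dividing by $\tau_k$, letting $k\to\infty$, and using differentiability of $F(\cdot,\xi_{\bar{\beta}_j})$ together with closedness of $-K$, I obtain $\nabla F(\bar{x},\xi_{\bar{\beta}_j})^\top\bar{p}-\gamma q_j\notin -K$. The contradiction comes from showing this very vector lies in $-\text{int}(K)$: since $\nabla F(\bar{x},\xi_{\bar{\beta}_j})^\top\bar{p}=q_j-\tfrac{1}{2}\bar{p}^\top B(\bar{x},\xi_{\bar{\beta}_j})\bar{p}$, one gets $\nabla F(\bar{x},\xi_{\bar{\beta}_j})^\top\bar{p}-\gamma q_j=(1-\gamma)q_j-\tfrac{1}{2}\bar{p}^\top B(\bar{x},\xi_{\bar{\beta}_j})\bar{p}$. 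As $0<\gamma<1$ and $q_j\in-\text{int}(K)$, the first term stays in $-\text{int}(K)$; $K$-positive definiteness of $B(\bar{x},\xi_{\bar{\beta}_j})$ (and $\bar{p}\neq 0$) puts $-\tfrac{1}{2}\bar{p}^\top B(\bar{x},\xi_{\bar{\beta}_j})\bar{p}$ in $-\text{int}(K)$ as well, and their sum lies in $-\text{int}(K)$ because $\text{int}(K)+\text{int}(K)\subseteq\text{int}(K)$ for a convex cone. This contradicts membership outside $-K$, proving (i).

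For statement (ii) I would feed $\gamma\tau q_j\in-\text{int}(K)$ into the inequality just established to get, for each $j\in[\bar{\omega}]$, the two-sided relation $F(\bar{x}+\tau\bar{p},\xi_{\bar{\beta}_j})\preceq F(\bar{x},\xi_{\bar{\beta}_j})+\gamma\tau q_j\prec F(\bar{x},\xi_{\bar{\beta}_j})$. The remaining work is set-theoretic. Applying Proposition \ref{2.1} to $C=F_U(\bar{x}+\tau\bar{p})$ I write $F_U(\bar{x}+\tau\bar{p})\subseteq\text{Max}(F_U(\bar{x}+\tau\bar{p}),K)-K$, identify these maximal elements with $\{F(\bar{x}+\tau\bar{p},\xi_{\bar{\beta}_j}):j\in[\bar{\omega}]\}$, and then chain inclusions: first into $\{F(\bar{x},\xi_{\bar{\beta}_j})+\gamma\tau q_j\}_{j}-K$ by (i), next into $\{F(\bar{x},\xi_{\bar{\beta}_j})\}_{j}-\text{int}(K)-K$ using the strict decrease, and finally, via $-\text{int}(K)-K\subseteq-\text{int}(K)$ and $\{F(\bar{x},\xi_{\bar{\beta}_j})\}_j\subseteq F_U(\bar{x})$, into $F_U(\bar{x})-\text{int}(K)$. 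This last inclusion is exactly $F_U(\bar{x}+\tau\bar{p})\prec^u F_U(\bar{x})$, and the intermediate inclusion gives the $\preceq^u$ relation.

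The main obstacle I anticipate is the identification $\text{Max}(F_U(\bar{x}+\tau\bar{p}),K)=\{F(\bar{x}+\tau\bar{p},\xi_{\bar{\beta}_j}):j\in[\bar{\omega}]\}$ used to open the set-inclusion chain. The indices $\bar{\beta}_j$ are chosen to realize the maximal elements of $F_U(\bar{x})$, but there is no immediate reason that the \emph{same} indices still capture all of $\text{Max}(F_U(\bar{x}+\tau\bar{p}),K)$ at the perturbed point. Establishing this requires a stability argument for the maximal set, presumably shrinking $\tilde{\tau}$ and invoking constancy of $\omega$ near $\bar{x}$ together with continuity of $F$; I would isolate it as a separate step rather than assert it, since everything else in the chain is routine monotonicity bookkeeping for $\preceq^u$ and $\prec^u$.
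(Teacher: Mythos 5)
Your treatment of statement (i) is correct and in fact cleaner than the paper's. Both arguments share the same skeleton (negate the Armijo property along $\tau_k\downarrow 0$ for one fixed index $j$, divide by $\tau_k$, pass to the limit), but where the paper then makes a long detour through Taylor expansions along auxiliary sequences $\{p_k\}$ and algorithm-generated iterates $\{x_k\}$ to show that the limiting vector lies in $-\text{int}(K)$, you obtain the same membership by the one-line identity $\nabla F(\bar x,\xi_{\bar\beta_j})^\top\bar p-\gamma q_j=(1-\gamma)q_j-\tfrac12\bar p^\top B(\bar x,\xi_{\bar\beta_j})\bar p$ together with $q_j\in-\text{int}(K)$ (from nonstationarity via Proposition \ref{p-4.1} and representability), $\bar p\neq 0$, $K$-positive definiteness of $B$, and $\text{int}(K)+\text{int}(K)\subseteq\text{int}(K)$. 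One shared blemish: the step ``each difference quotient is outside $-K$, hence so is the limit'' is not justified by closedness of $-K$ (your stated reason is backwards --- closed sets keep limits of members, not of non-members), and the paper's passage to \eqref{S-5} has the same defect. The repair is immediate in your setup: since the limit lies in the \emph{open} set $-\text{int}(K)$, the difference quotients themselves lie in $-\text{int}(K)\subseteq -K$ for large $k$, contradicting the negated Armijo inequality directly, with no claim about the limit's non-membership needed.

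For statement (ii) your proposal is incomplete, and the hole is exactly where you said it is: the identification $\text{Max}(F_U(\bar x+\tau\bar p),K)=\{F(\bar x+\tau\bar p,\xi_{\bar\beta_j}):j\in[\bar\omega]\}$ that opens the inclusion chain. You should know that the paper does not close this hole either --- it asserts the identity with no justification --- and your instinct that it needs a genuine stability argument is correct in the strongest possible sense: under the stated hypotheses the identity, and with it conclusion (ii), can fail. Take $K=\mathbb{R}^2_{\geqq}$, $U=\{\xi_1,\xi_2\}$, $F(x,\xi_1)=(-x+x^2)\,(1,1)^\top$, $F(x,\xi_2)=(x^2,\,-1+10x+x^2)^\top$, $\bar x=0$, and componentwise identity matrices for $B$. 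Then $\text{Max}(F_U(0),K)=\{(0,0)^\top\}$, so $P_{\bar x}=\{(1)\}$ and the dominated scenario $\xi_2$ is invisible to the partition set; $\bar x$ is nonstationary with $\bar p=1$, and statement (i) holds. Yet for every small $\tau>0$ one has $F(\tau,\xi_2)=(\tau^2,\,-1+10\tau+\tau^2)^\top$ with strictly positive first component, so $F(\tau,\xi_2)\notin F_U(0)-\text{int}(K)$ and $F_U(\tau)\not\prec^u F_U(0)$; moreover $\text{Max}(F_U(\tau),K)$ now has two elements, not one. The culprit is a scenario that is only weakly dominated at $\bar x$ (the difference sits on the boundary of $-K$) and escapes domination after the step. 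So (ii) cannot be proved as stated; it needs an extra hypothesis --- e.g.\ regularity of $\bar x$ so that the partition sets are controlled in a neighborhood, or strict domination of all non-maximal scenarios --- precisely the kind of condition you proposed to isolate. In short: (i) is fine and improves on the paper; (ii) is unproven in your proposal, but the missing step is also missing, and is in fact irreparably missing, in the paper's own proof.
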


\begin{theorem}\label{4.4}
Suppose $\{x_k\}$ is a sequence produced by  Algorithm \ref{QNM_algoritm} and $\bar{x}$ is an accumulation point for the sequence $\{x_k\}$. Additionally, suppose that $\bar{x}$ is a regular point for $F_U$. Then, $\bar{x}$ is a stationary point of \eqref{SOP}.
\begin{proof}
Consider the functional $\Upsilon:P(\mathbb{R}^m)\to\mathbb{R}\cup\{+\infty\}$ defined as 
\begin{align*}
 \Upsilon(A): =\underset{z\in A}{\text{ sup }} \Theta_e(z),~ A\in P(\mathbb{R}^m). 
\end{align*}
First, we show that
\begin{align*}
\forall~ k\in\mathbb{N}\cup\{0\} : (\Upsilon \circ F_U)(x_{k+1})\leq (\Upsilon \circ F_U)(x_k) +\gamma \tau_k \phi(x_k).
\end{align*}
Due to the monotonicity property of $\Theta_e$ in Proposition \ref{2.2} (iii),  the function $\Upsilon$ is also monotone with respect to the $\preceq^u$, i.e., 
\begin{align*}
\forall~ A, B\in P(\mathbb{R}^m) : A\preceq^u B \implies \Upsilon(A)\leq\Upsilon(B).
\end{align*}
Now in view of \eqref{4.2(ii)} of Theorem \ref{4.2}, for all $k\in\mathbb{N}\cup \{0\}$,
\begin{align*}
F_U(x_k+\tau_k p_k) \preceq^u \big\{F(x_k,\xi_{\beta_j^k}) +\gamma\tau_k\big(\nabla F(x_k, \xi_{\beta_j^k})^\top p_k +\frac{1}{2}p_k^\top B(x_k, \xi_{\beta_j^k})p_k\big)\big\}_{j\in[\omega_k]}.
\end{align*}
Therefore, by applying the monotonicity of $\Upsilon$ and the sublinearity of $\Theta_e$, we derive that for any $k\in\mathbb{N}\cup\{0\}$,
\begin{align*}
(\Upsilon \circ F_U)(x_{k+1}) &\leq \underset{j\in[\omega_k]} {\text{max}}\big\{\Theta_e\big(F(x_k,\xi_{\beta_j^k}) +\gamma\tau_k\big(\nabla F(x_k, \xi_{\beta_j^k})^\top p_k +\frac{1}{2}p_k^\top B(x_k, \xi_{\beta_j^k})p_k\big)\big)\big\}\\
&\leq \underset{j\in[\omega_k]}{\text{max}}\big\{\Theta_e\big(F(x_k,\xi_{\beta_j^k})\big) +\gamma\tau_k \Theta_e\big(\nabla F(x_k, \xi_{\beta_j^k})^\top p_k +\frac{1}{2}p_k^\top B(x_k, \xi_{\beta_j^k})p_k\big)\big\}\\
& \leq \underset{j\in[\omega_k]}{\text{max}}\Theta_e\big(F(x_k,\xi_{\beta_j^k})\big) +\gamma\tau_k \underset{j\in[\omega_k]}{\text{max}}\Theta_e \big(\nabla F(x_k, \xi_{\beta_j^k})^\top p_k +\frac{1}{2}p_k^\top B(x_k, \xi_{\beta_j^k})p_k\big)\\
& = (\Upsilon \circ F_U)(x_k) +\gamma \tau_k \underset{j\in[\omega_k]}{\text{max}}\Theta_e \big(\nabla F(x_k, \xi_{\beta_j^k})^\top p_k +\frac{1}{2}p_k^\top B(x_k, \xi_{\beta_j^k})p_k\big)\\
\implies -\gamma \tau_k \underset{j\in[\omega_k]}{\text{max}}\Theta_e &\big(\nabla F(x_k, \xi_{\beta_j^k})^\top p_k +\frac{1}{2}p_k^\top B(x_k, \xi_{\beta_j^k})p_k\big)\leq (\Upsilon \circ F_U)(x_k)-(\Upsilon \circ F_U)(x_{k+1}).
\end{align*}
By summing the above inequality for $k=0,1,2,\ldots, \bar{k} $, we have 
\begin{align*}
-\gamma \sum_{k=0}^{\bar{k}}\tau_k \underset{j\in[\omega_k]}{\text{max}}\Theta_e \big(\nabla F(x_k, \xi_{\beta_j^k})^\top p_k +\frac{1}{2}p_k^\top B(x_k, \xi_{\beta_j^k})p_k\big)\leq (\Upsilon \circ F_U)(x_0)-(\Upsilon \circ F_U)(x_{\bar{k}+1}) \\ 
\text{i.e.},~-\sum_{k=0}^{\bar{k}}\tau_k \underset{j\in[\omega_k]}{\text{max}}\Theta_e \big(\nabla F(x_k, \xi_{\beta_j^k})^\top p_k +\frac{1}{2}p_k^\top B(x_k, \xi_{\beta_j^k})p_k\big)\leq \frac{(\Upsilon \circ F_U)(x_0)-(\Upsilon \circ F_U)(x_{\bar{k}+1})}{\gamma}.
\end{align*}
As $\{x_k\}$ is a sequence of nonstationary points,
\begin{align}\label{4.8} 
\nabla F(x_k, \xi_{\beta_j^k})^\top p_k +\frac{1}{2}p_k^\top B(x_k, \xi_{\beta_j^k})p_k\in - \text{int}(K).
\end{align}
Specifically, by applying Proposition \ref{2.2} in \eqref{4.8}, we find that for all $k\in\mathbb{N}\cup \{0\}, j\in[\omega_k]$, the following holds
\begin{align*}
\Theta_e \big(\nabla F(x_k, \xi_{\beta_j^k})^\top p_k +\frac{1}{2}p_k^\top B(x_k, \xi_{\beta_j^k})p_k\big)<0.
\end{align*}
Therefore, we conclude that
\begin{align*}
 0< -\sum_{k=0}^{\bar{k}}\tau_k \underset{j\in[\omega_k]}{\text{max}}\Theta_e \big(\nabla F(x_k, \xi_{\beta_j^k})^\top p_k +\frac{1}{2}p_k^\top B(x_k, \xi_{\beta_j^k})p_k\big)\leq \frac{(\Upsilon \circ F_U)(x_0)-(\Upsilon \circ F_U)(x_{\bar{k}+1})}{\gamma}.
\end{align*}
Now, by taking the limit of the previous inequality as $\bar{k} \to \infty$, we deduce that 
\begin{align*}
0\leq -\sum_{k=0}^{\infty} \tau_k \underset{j\in[\omega_k]}{\text{max}}\Theta_e \big(\nabla F(x_k, \xi_{\beta_j^k})^\top p_k +\frac{1}{2}p_k^\top B(x_k, \xi_{\beta_j^k})p_k\big)<\infty. 
\end{align*}
In particular, this implies that 
\begin{align}
 \nonumber & \lim_{k\to\infty}\tau_k \underset{j\in[\omega_k]}{\text{max}}\Theta_e\big(\nabla F(x_k, \xi_{\beta_j^k})^\top p_k +\frac{1}{2}p_k^\top B(x_k, \xi_{\beta_j^k})p_k\big)=0\\
\text{i.e.}, ~~ & \lim_{k\to\infty}\tau_k\phi(x_k)=0.\label{4.9}
\end{align}
However, since $\bar{x}$ is an accumulation point of the sequence $\{x_k\}_{k\geq 0}$, there exists a subsequence $\mathcal{K}$ in $\mathbb{N}$ such that $x_k\xrightarrow{\mathcal{K}}\bar{x}$. Since there are only a finite number of subsets of $[p]$ and $\bar{x}$ is regular for $F_U$, we can utilize Lemma 4.2 of \cite{18}. From this, it follows that, without any loss of generality, there exists a subset $A\subseteq P_{\bar{x}}$ and an element $\bar{\beta}\in A$ such that for large $k$,
\begin{align*}
\omega_k = \bar{\omega}, P_{x_k} = A, \text{ and } \beta^k = \bar{\beta}. 
\end{align*}
Furthermore, since the sequence $\{\tau_k\}$ and $\{p_k\}$ are bounded by Theorem \ref{4.3}, we can find $\bar{\tau}>0$, $\bar{p}\in\mathbb{R}^n$ such that
\begin{align*}
\tau_k\xrightarrow{k\in\mathcal{K}}\bar{\tau} \text{ and } p_k\xrightarrow{k\in\mathcal{K}}\bar{p}.
\end{align*}
Assume that $\bar{x}$ is nonstationary, according to  Proposition \ref{p-4.1}, it implies that $\phi(\bar{x})<0$ and $\bar{p}\neq 0$. By applying Theorem \ref{4.2} (ii), it follows that there exists an integer $q$ such that 
\begin{align}\label{aux_13_01_25_02}
\Upsilon\circ\big(F_U(\bar{x}+2^{-q}\bar{p})-F_U(\bar{x})\big)<\gamma ~2^{-q}\phi(\bar{x}).
\end{align}
Since $\phi$ and $\Upsilon$ are continuous within their respective domains, we have
\begin{align}\label{4.10}
\lim_{k\to\infty}p_k = \bar{p} \text{ and } \lim_{k\to\infty}\phi(x_k)= \phi(\bar{x})<0.
\end{align}
For large enough $k$, we have from \eqref{aux_13_01_25_02} that 
\begin{align*}
(\Upsilon\circ F_U)(x_k+2^{-q}p_k)-(\Upsilon\circ F_U)(x_k)<\gamma ~2^{-q}\phi(x_k).
\end{align*}
Considering the definition of $\Upsilon$ and Step 4 of Algorithm \ref{QNM_algoritm}, this implies that for sufficiently large $k$, $\tau_k\geq 2^{-q}$. Therefore, by taking into account the second limit in \eqref{4.10}, we deduce that 
\begin{align*}
\liminf_{k\to\infty}\tau_k|\phi(x_k)|>0, 
\end{align*}
which is contradictory to \eqref{4.9}. Thus, the desired result follows.
\end{proof}
\end{theorem}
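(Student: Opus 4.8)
The plan is to argue by contradiction through a scalar merit function that decreases monotonically along the iterates. First I would introduce the functional $\Upsilon : P(\mathbb{R}^m) \to \mathbb{R} \cup \{+\infty\}$ given by $\Upsilon(A) := \sup_{z \in A} \Theta_e(z)$, which collapses the set-valued objective $F_U$ into a real value along the sequence. The point is that, by the monotonicity of $\Theta_e$ recorded in Proposition \ref{2.2}(iii), $\Upsilon$ is monotone with respect to $\preceq^u$. Combining this monotonicity with the Armijo acceptance rule of Step 4 and the sublinearity of $\Theta_e$, I would establish the per-step sufficient decrease
\[
(\Upsilon \circ F_U)(x_{k+1}) \leq (\Upsilon \circ F_U)(x_k) + \gamma \tau_k \phi(x_k),
\]
in which the correction term is exactly $\gamma \tau_k$ times the optimal value $\phi(x_k)$ of the direction-finding subproblem \eqref{D-4.2}.

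Next I would telescope this inequality over $k = 0, 1, \ldots, \bar{k}$ and let $\bar{k} \to \infty$. Since the generated points are nonstationary, Proposition \ref{p-4.1} ensures $\phi(x_k) < 0$, so each summand $-\tau_k \phi(x_k)$ is positive; the telescoped sum is bounded by the total drop $\big((\Upsilon \circ F_U)(x_0) - (\Upsilon \circ F_U)(x_{\bar{k}+1})\big)/\gamma$. This forces the series $\sum_k \tau_k |\phi(x_k)|$ to converge, and in particular $\tau_k \phi(x_k) \to 0$.

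The crux of the argument, and the main obstacle, is that the combinatorial data of the problem --- the cardinality $\omega_k$, the partition set $P_{x_k}$, and the selected index vector $\beta^k$ --- need not be constant in $k$. This is precisely where the regularity of $\bar{x}$ enters: passing to a subsequence $\mathcal{K}$ with $x_k \xrightarrow{\mathcal{K}} \bar{x}$ and invoking Lemma 4.2 of \cite{18}, I would fix, for all large $k \in \mathcal{K}$, a single subset $A \subseteq P_{\bar{x}}$ and element $\bar{\beta} \in A$ with $\omega_k = \bar{\omega}$, $P_{x_k} = A$, and $\beta^k = \bar{\beta}$. Using the boundedness of $\{p_k\}$ from Theorem \ref{4.3} (and of $\{\tau_k\}$), I may further assume $p_k \to \bar{p}$ and $\tau_k \to \bar{\tau}$ along $\mathcal{K}$. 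This stabilization reduces the local behaviour to a single fixed vector optimization problem and is what makes the classical line-search analysis applicable.

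Finally I would suppose, for contradiction, that $\bar{x}$ is nonstationary. Then Proposition \ref{p-4.1} gives $\phi(\bar{x}) < 0$ and $\bar{p} \neq 0$, and Theorem \ref{4.2}(ii) produces a fixed exponent $q$ with $\Upsilon\big(F_U(\bar{x} + 2^{-q}\bar{p}) - F_U(\bar{x})\big) < \gamma\, 2^{-q} \phi(\bar{x})$. By the continuity of $\phi$ (Theorem \ref{Th 4.1}) and of $\Upsilon$, this strict decrease survives at $x_k$ with the same step $2^{-q}$ for all large $k \in \mathcal{K}$, so the backtracking of Step 4 eventually accepts $\tau_k \geq 2^{-q}$. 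Together with $\phi(x_k) \to \phi(\bar{x}) < 0$, this yields $\liminf_{k} \tau_k |\phi(x_k)| > 0$, contradicting $\tau_k \phi(x_k) \to 0$. Hence $\bar{x}$ is a stationary point of \eqref{SOP}.
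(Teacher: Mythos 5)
Your proposal is correct and follows essentially the same route as the paper's own proof: the merit functional $\Upsilon$, the Armijo-based sufficient decrease $(\Upsilon \circ F_U)(x_{k+1}) \leq (\Upsilon \circ F_U)(x_k) + \gamma \tau_k \phi(x_k)$, the telescoping argument yielding $\tau_k \phi(x_k) \to 0$, the stabilization of $\omega_k$, $P_{x_k}$, $\beta^k$ along a subsequence via regularity and Lemma 4.2 of \cite{18}, and the final contradiction through Theorem \ref{4.2}(ii) with the fixed step $2^{-q}$. No gaps to report.
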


Before we start analysing the convergence properties of the proposed quasi-Newton Algorithm \ref{QNM_algoritm}, we recall some results confirming the superlinear convergence of the BFGS method. The first result, based on \cite{19}, states that if $\{x_k\}$ is a sequence of nonstationary points converging to $\bar{x}$, the following relation holds
\begin{align*}
\lim_{k\to\infty} \frac{\left\lVert \left(B(x_k)-\nabla^2 F({x}_k)\right) s_k \right\rVert}{\big\lVert s_k \big\rVert} = 0,
\end{align*}
where $F$ is twice continuously differentiable such that $\nabla F(\bar{x}) = 0$ and $\nabla^2 F(\bar{x})$ is positive definite, $s_k := x_{k+1}-x_k = \tau_k p_k$. Moreover, $B_k$ is a BFGS approximation at iteration $k$. 
Now, from \cite{27}, and in view of the above result, for some $\epsilon>0$, there exists $k_0\in\mathbb{N}$ such that for all $k\geq k_0$, the following condition holds

\begin{align*}
\lim_{k\to\infty} \frac{\big\lVert (B(x_k, \xi_{\beta_j^k})-\nabla^2 F(\bar{x}, \xi_{\bar{\beta_j}^k})) p_k \big\rVert}{\big\lVert p_k \big\rVert} < \epsilon \text{ for every } j\in[\omega(x_k)].
\end{align*}

\begin{lemma}\label{l}\cite{18,22}
Let $\bar{x}$ be a regular point of the function $F_U$. 
\begin{enumerate}[(i)]
\item Then, there exists $\delta > 0$, such that for every $x\in \mathcal{N}(\bar x, \delta)$, we have $\omega(x)$ = $\omega(\bar{x})$.
\item Let $\epsilon>$0 and $\delta>0$ be such that for any $x,y\in S$, with $\big\lVert y-x \big\rVert<\delta$, the following condition holds:
\begin{align}
\big\lVert \nabla^2 F(y,\xi_{\beta_j}) - \nabla^2 F(x,\xi_{\beta_j}) \big\rVert <\frac{\epsilon}{2}\text{ for all }j=1,2,\ldots,[\omega(\bar{x})].
\end{align}
Considering this assumption, for any $x,y\in S$ such that $\big\lVert y-x \big\rVert<\delta$ we have  that 
\begin{enumerate}[(a)]
\item 
\begin{align}\label{L_SC_eq1} 
\big\lVert \nabla F(y,\xi_{\beta_j})-[\nabla F(x,\xi_{\beta_j})+\nabla^2 F(x,\xi_{\beta_j})(y-x)] \big\rVert <\frac{\epsilon}{2} \big\lVert y-x \big\rVert, \text{ and }
\end{align}
\item 
\begin{align}\label{l-4.4}
 ~&~\bigg\lVert F(y,\xi_{\beta_j})-\big[ F(x,\xi_{\beta_j})+\nabla F(x,\xi_{\beta_j})^\top (y-x)+\frac{1}{2} (y-x)^\top \nabla^2F(x,\xi_{\beta_j})(y-x)\big] \bigg\rVert <\frac{\epsilon}{4} \big\lVert y-x \big\rVert^2 \\
 ~&~\text{ for all } j= 1,2,\ldots,[\omega(\bar{x})]. \nonumber
\end{align}
\end{enumerate}
\end{enumerate}
\end{lemma}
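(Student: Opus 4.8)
The plan is to handle the two parts independently. Part (i) is a direct reading of the definition of a regular point: by the defining property (ii) of regularity there is a $\delta>0$ on which $\omega(\cdot)$ is constant throughout $\mathcal{N}(\bar{x},\delta)$; since $\bar{x}\in\mathcal{N}(\bar{x},\delta)$, this constant value is $\omega(\bar{x})$, and hence $\omega(x)=\omega(\bar{x})$ for all $x\in\mathcal{N}(\bar{x},\delta)$. Thus part (i) requires nothing beyond unwinding the definition. Part (ii) then consists of two standard Taylor-with-integral-remainder estimates applied scenario-by-scenario, where I fix $j\in[\omega(\bar{x})]$ and work with the single twice continuously differentiable map $F(\cdot,\xi_{\beta_j})$; the standing assumption on $\mathcal{P}(U)$ guarantees all the integrands below are continuous.

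To prove (a), I would apply the fundamental theorem of calculus to the Jacobian along the segment $[x,y]$,
\[
\nabla F(y,\xi_{\beta_j})^\top-\nabla F(x,\xi_{\beta_j})^\top=\int_0^1\nabla^2 F\big(x+t(y-x),\xi_{\beta_j}\big)(y-x)\,dt,
\]
subtract $\nabla^2 F(x,\xi_{\beta_j})(y-x)$ from both sides, and bound the resulting integrand in norm using the hypothesis. This is legitimate because $\|(x+t(y-x))-x\|=t\|y-x\|\le\|y-x\|<\delta$ for every $t\in[0,1]$, so the Hessian difference under the integral sign stays below $\epsilon/2$; pulling this bound out of the integral gives precisely $\tfrac{\epsilon}{2}\|y-x\|$.

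To prove (b), I would use the second-order Taylor expansion with integral remainder,
\[
F(y,\xi_{\beta_j})=F(x,\xi_{\beta_j})+\nabla F(x,\xi_{\beta_j})^\top(y-x)+\int_0^1(1-t)\,(y-x)^\top\nabla^2 F\big(x+t(y-x),\xi_{\beta_j}\big)(y-x)\,dt.
\]
Subtracting the quadratic model and using $\int_0^1(1-t)\,dt=\tfrac12$, the leftover is
\[
\int_0^1(1-t)\,(y-x)^\top\big[\nabla^2 F\big(x+t(y-x),\xi_{\beta_j}\big)-\nabla^2 F(x,\xi_{\beta_j})\big](y-x)\,dt,
\]
whose norm is bounded, again by the hypothesis, by $\tfrac{\epsilon}{2}\|y-x\|^2\int_0^1(1-t)\,dt=\tfrac{\epsilon}{4}\|y-x\|^2$. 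The factor $\tfrac14$ is exactly the product of the $\epsilon/2$ Hessian bound with the $\tfrac12$ coming from $\int_0^1(1-t)\,dt$.

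I expect the only point requiring genuine care—and hence the main obstacle—to be the vector-valued bookkeeping: $\nabla^2 F(\cdot,\xi_{\beta_j})$ is a third-order object, so one must either run the scalar estimate on each component $f_l(\cdot,\xi_{\beta_j})$ and reassemble the components under the chosen norm, or fix a consistent interpretation of the tensor–vector contractions compatible with the spectral norm used in the hypothesis. This is routine but is where a careful reader should verify that the same constants $\epsilon/2$ and $\epsilon/4$ propagate from the scalar to the vector setting.
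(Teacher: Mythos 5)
Your proof is correct. Note, however, that the paper itself offers no proof of this lemma at all: it is imported verbatim from the cited references (Bouza--Quintana--Tammer for the regularity part, Drummond--Ruppa--Svaiter for the Taylor estimates), so there is no in-paper argument to compare against. Your argument is the standard one used in those sources: part (i) is exactly the unwinding of condition (ii) in the definition of a regular point, evaluated at $\bar{x}$ itself, and parts (ii)(a)--(b) follow from the fundamental theorem of calculus applied to the Jacobian and the second-order Taylor expansion with integral remainder, with the Hessian-continuity hypothesis bounding the integrands; the constants $\epsilon/2$ and $\epsilon/4 = (\epsilon/2)\int_0^1(1-t)\,dt$ come out exactly as you computed. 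You are also right to flag the tensor/norm bookkeeping for the vector-valued case as the only delicate point: the stated constants survive the passage from components to the full map only if the norm on $\nabla^2 F(\cdot,\xi_{\beta_j})$ is taken to be an operator-type norm satisfying $\lVert T v\rVert \le \lVert T\rVert\,\lVert v\rVert$ and $\lVert v^\top T v\rVert \le \lVert T\rVert\,\lVert v\rVert^2$, which is the implicit convention in the paper and its references (otherwise a factor depending on $m$ appears). The only residual blemish, which belongs to the statement rather than to your proof, is that the strict inequalities degenerate when $y = x$, where both sides vanish.
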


Next, we modify  Lemma \ref{l} to assess the approximation error, using the BFGS approximation of the second-order derivative term Hessian.
\begin{lemma}
Let $V\subset S$ be a convex subset, and $\epsilon > 0$. Let $V$ be such that there exists a $\delta> 0$ for which any $x,y\in V$ holds $\lVert y-x \rVert<\delta$. Let $\{x_k\}$ be a sequence generated by Algorithm \ref{QNM_algoritm}. Assume that for the given $\epsilon>0$, there exists $k_0\in\mathbb{N}$ such that for all $k\geq k_0$, we have 

\begin{align}\label{4.15}
\frac{\big\lVert(\nabla^2 F(x_k,\xi_{\beta_j^k})-B(x_k, \xi_{\beta_j^k}))(y-x_k) \big\rVert}{\big\lVert y-x_k \big\rVert} <\frac{\epsilon}{2}.
\end{align}
Then, for any $x_k$, $k\geq k_0$, and $y\in V$ with $\big\lVert y-x_k \big\rVert<\delta$, we have 
\begin{align}
  \big\lVert \nabla F(y,\xi_{\beta_j})-[\nabla F(x_k,\xi_{\beta_j^k})+B(x_k,\xi_{\beta_j^k})(y-x_k)] \big\rVert <\epsilon \big\lVert y-x_k \big\rVert,
\end{align}
and
\begin{align}\label{l-4.7}
\big\lVert F(y,\xi_{\beta_j})-[ F(x_k,\xi_{\beta_j^k})+\nabla F(x_k,\xi_{\beta_j^k})^\top(y-x_k)+\frac{1}{2} (y-x_k)^\top B(x_k,\xi_{\beta_j^k})(y-x_k)] \big\rVert <\frac{\epsilon}{2} \big\lVert y-x_k \big\rVert^2.
\end{align}
\begin{proof}
From \eqref{L_SC_eq1} of Lemma \ref{l}, for every $j\in\ [\omega(x)]$ and $x,y\in S$ such that $\big\lVert y-x \big\rVert<\delta$, we have
\begin{align*}
 ~&~\big\lVert \nabla F(y,\xi_{\beta_j})-[\nabla F(x_k,\xi_{\beta_j^k})+B(x_k,\xi_{\beta_j^k})(y-x_k)] \big\rVert\\
\leq~&~\big\lVert \nabla F(y,\xi_{\beta_j})-\nabla F(x_k,\xi_{\beta_j^k})-\nabla^2 F(x_k,\xi_{\beta_j^k})(y-x_k) \big\rVert+\big\lVert (\nabla^2 F(x_k,\xi_{\beta_j^k})-B(x_k,\xi_{\beta_j^k}))(y-x_k) \big\rVert \\
\leq~&~\frac{\epsilon}{2} \big\lVert y-x_k \big\rVert +\frac{\epsilon}{2} \big\lVert y-x_k \big\rVert,~\text{using}~\eqref{L_SC_eq1}~\text{and}~\eqref{4.15}\\
 \leq~&~\epsilon \big\lVert y-x_k \big\rVert. 
\end{align*}
Similarly, by \eqref{l-4.4}, 
\begin{align*}
~&~\big\lVert F(y,\xi_{\beta_j})-[ F(x_k,\xi_{\beta_j^k})+\nabla F(x_k,\xi_{\beta_j^k})^\top(y-x_k)+\frac{1}{2} (y-x_k)^\top B(x_k,\xi_{\beta_j^k})(y-x_k)] \big\rVert \\
\leq ~&~\big\lVert F(y,\xi_{\beta_j})-F(x_k,\xi_{\beta_j}^k)-\nabla F(x_k,\xi_{\beta_j}^k)^\top (y-x_k)-\frac{1}{2} (y-x_k)^\top \nabla^2F(x_k,\xi_{\beta_j}^k)(y-x_k) \big\rVert \\
~&~ +\big\lVert \frac{1}{2} (y-x_k)^\top (\nabla^2 F(x_k,\xi_{\beta_j^k})-B(x_k,\xi_{\beta_j^k}))(y-x_k) \big\rVert\\
<~&~\frac{\epsilon}{4} \big\lVert y-x_k \big\rVert^2 +\frac{\epsilon}{4} \big\lVert y-x_k \big\rVert^2 \\
<~&~\frac{\epsilon}{2} \big\lVert y-x_k \big\rVert^2, 
\end{align*}
which confirms \eqref{l-4.7} and the proof is complete.
\end{proof}
\end{lemma}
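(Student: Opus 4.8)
The plan is to obtain each of the two estimates from its exact-Hessian counterpart in Lemma~\ref{l} by inserting and then subtracting the true second-order term, so that every left-hand side decomposes into one piece already controlled by Lemma~\ref{l} and one piece measuring the Hessian-approximation error, the latter being governed by the standing hypothesis~\eqref{4.15}. The triangle inequality then recombines the two bounds, and the two quarter-$\epsilon$ (resp. half-$\epsilon$) contributions add up to exactly the asserted constants.

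For the gradient inequality I would add and subtract $\nabla^2 F(x_k,\xi_{\beta_j^k})(y-x_k)$ inside the norm, giving
\begin{align*}
\big\lVert \nabla F(y,\xi_{\beta_j})-[\nabla F(x_k,\xi_{\beta_j^k})+B(x_k,\xi_{\beta_j^k})(y-x_k)] \big\rVert
\leq~&~ \big\lVert \nabla F(y,\xi_{\beta_j})-\nabla F(x_k,\xi_{\beta_j^k})-\nabla^2 F(x_k,\xi_{\beta_j^k})(y-x_k) \big\rVert \\
&+\big\lVert (\nabla^2 F(x_k,\xi_{\beta_j^k})-B(x_k,\xi_{\beta_j^k}))(y-x_k) \big\rVert.
\end{align*}
The first summand is at most $\tfrac{\epsilon}{2}\lVert y-x_k\rVert$ by~\eqref{L_SC_eq1}, and the second is at most $\tfrac{\epsilon}{2}\lVert y-x_k\rVert$ by~\eqref{4.15}, so adding yields the claimed bound $\epsilon\lVert y-x_k\rVert$. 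For the function-value inequality I would proceed identically, this time inserting the exact quadratic term $\tfrac{1}{2}(y-x_k)^\top \nabla^2 F(x_k,\xi_{\beta_j^k})(y-x_k)$: the resulting Taylor-remainder piece is bounded by $\tfrac{\epsilon}{4}\lVert y-x_k\rVert^2$ via~\eqref{l-4.4}, while the remaining piece is the scalar quadratic form of the approximation error $\nabla^2 F(x_k,\xi_{\beta_j^k})-B(x_k,\xi_{\beta_j^k})$.

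The proof is essentially routine, and the only non-mechanical step is passing from the matrix--vector bound in~\eqref{4.15} to a bound on that scalar quadratic form; this is where Cauchy--Schwarz is needed, namely
\begin{align*}
\tfrac{1}{2}\big\lvert (y-x_k)^\top (\nabla^2 F(x_k,\xi_{\beta_j^k})-B(x_k,\xi_{\beta_j^k}))(y-x_k) \big\rvert
\leq \tfrac{1}{2}\lVert y-x_k\rVert\,\big\lVert (\nabla^2 F(x_k,\xi_{\beta_j^k})-B(x_k,\xi_{\beta_j^k}))(y-x_k) \big\rVert
< \tfrac{\epsilon}{4}\lVert y-x_k\rVert^2,
\end{align*}
the last inequality being exactly~\eqref{4.15}. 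Summing this with the $\tfrac{\epsilon}{4}\lVert y-x_k\rVert^2$ Taylor-remainder bound gives $\tfrac{\epsilon}{2}\lVert y-x_k\rVert^2$, which is~\eqref{l-4.7}. The one bookkeeping point to watch is that \eqref{L_SC_eq1} and \eqref{l-4.4} of Lemma~\ref{l} must be applied with the index $\beta_j^k$ matching the BFGS data $B(x_k,\xi_{\beta_j^k})$ and with the same $\epsilon,\delta$ on the convex set $V$; this is what guarantees that the two halves of each triangle inequality refer to the same quantity and hold uniformly for all $k\ge k_0$ and all relevant $j$ with $\lVert y-x_k\rVert<\delta$.
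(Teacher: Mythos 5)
Your proof is correct and follows essentially the same route as the paper: a triangle-inequality decomposition of each left-hand side into the exact-Hessian Taylor remainder, controlled by \eqref{L_SC_eq1} and \eqref{l-4.4} of Lemma \ref{l}, plus the Hessian-approximation error, controlled by hypothesis \eqref{4.15}. The only difference is that you spell out the Cauchy--Schwarz step needed to pass from the matrix--vector bound \eqref{4.15} to the bound on the scalar quadratic form, a step the paper uses implicitly.
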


\begin{theorem}\label{quadratic}\emph{(Superlinear convergence).}
Let $\{x_k\}$ be a sequence of nonstationary points generated by Algorithm \ref{QNM_algoritm} and $\bar x$ be one of its accumulation points. Additionally, let $\bar{x}$ be a regular point for $F_U$. Suppose that $S$ be a neighborhood of $\bar{x}$ and there exists $a>0,b>0,\delta>0 \text{ and } \epsilon>0$ for which the following conditions holds:
\begin{enumerate}[(i)]
\item $aI\leq B(x,\xi_j)\leq bI$ for all $j\in[\omega(x)]$,
\item $\big\lVert \nabla^2F(x,\xi_{\beta_j})-\nabla^2F(y,\xi_{\beta_j}) \big\rVert <\epsilon$ for all $x,y\in S$ with $\big\lVert x-y \big\rVert < \delta$,
\item $\big\lVert (\nabla^2F(x_k,\xi_{\beta_j^k})- B(x_k,\xi_{\beta_j^k}))(y-x_k))\big\rVert<\epsilon$  for all $x_k,y\in S$ with $\big\lVert x_k-y\big\rVert<\delta$, and
\item $\epsilon\leq a(1-\gamma)$. 
\end{enumerate} 
Then, $\tau_k=1$, for sufficiently large $k\in\mathbb{N}$, and the sequence $\{x_k\}$ converges superlinearly to $\bar{x}\in\mathbb{R}^n$.
\end{theorem}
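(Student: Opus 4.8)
The plan is to reproduce, in the set-valued minimax setting of Algorithm \ref{QNM_algoritm}, the classical two-stage proof of superlinear convergence for BFGS: first to show that the unit step $\tau_k=1$ is eventually accepted in Step~4, so that $x_{k+1}=x_k+p_k$ for all large $k$; then to establish the rate estimate $\|x_{k+1}-\bar x\|=o(\|x_k-\bar x\|)$. Throughout I would work inside the neighborhood $S$ of $\bar x$ on which (i)--(iv) hold, and, since $\bar x$ is regular, freeze the combinatorial data exactly as in the proof of Theorem \ref{4.4}: for all large $k$ one has $\omega_k=\bar\omega$, $P_{x_k}=A$, and $\beta^k=\bar\beta$, so that the subproblem of Step~2 is asymptotically a fixed minimax over the indices $\bar\beta_1,\dots,\bar\beta_{\bar\omega}$.

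For the unit-step stage, write $v^k_j:=\nabla F(x_k,\xi_{\beta^k_j})^\top p_k+\tfrac12 p_k^\top B(x_k,\xi_{\beta^k_j})p_k$, so that $\phi(x_k)=\max_j\Theta_e(v^k_j)$. Since each $B(x_k,\xi_{\beta^k_j})\ge aI$ and $\Theta_e$ is sublinear and monotone, the map $\varphi_{x_k}(\beta^k,\cdot)$ is strongly convex (as noted after \eqref{D-4.1}) with a positive modulus $\mu$ governed by $a$; combining its minimality at $p_k$ with $\varphi_{x_k}(\beta^k,0)=0$ gives the descent bound $\Theta_e(v^k_j)\le\phi(x_k)\le-\tfrac{\mu}{2}\|p_k\|^2$ for every $j$. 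Next I would invoke the BFGS quadratic error estimate \eqref{l-4.7} at $y=x_k+p_k$ to obtain $\|F(x_k+p_k,\xi_{\beta^k_j})-F(x_k,\xi_{\beta^k_j})-v^k_j\|\le\tfrac{\epsilon}{2}\|p_k\|^2$, and then split, using positive homogeneity, sublinearity, and the growth bound $\Theta_e(w)\le C\|w\|$ (from Lipschitz continuity of $\Theta_e$), to get $\Theta_e\big(F(x_k+p_k,\xi_{\beta^k_j})-F(x_k,\xi_{\beta^k_j})-\gamma v^k_j\big)\le(1-\gamma)\Theta_e(v^k_j)+\tfrac{C\epsilon}{2}\|p_k\|^2\le\tfrac12\big(C\epsilon-(1-\gamma)\mu\big)\|p_k\|^2$. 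Assumption (iv), $\epsilon\le a(1-\gamma)$, is precisely the calibration that renders the right-hand side nonpositive (after absorbing the growth constant of $\Theta_e$); representability of $\Theta_e$ in Proposition \ref{2.2}(iv) then converts this into the relation $\preceq$ demanded by Step~4 at $\tau=1$. As $1\in N$ is the largest admissible value, $\tau_k=1$ for all large $k$.

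For the rate stage, with $x_{k+1}=x_k+p_k$ I would set up a Dennis--Moré decomposition. Optimality of $p_k$ for the frozen minimax subproblem, via the support-function representation of the sublinear $\Theta_e$, produces multipliers $\gamma^k_1,\dots,\gamma^k_{\bar\omega}\in K^*$ (not all zero) with $\sum_j\nabla F(x_k,\xi_{\bar\beta_j})\gamma^k_j+G_k\,p_k=0$, where $G_k$ is the induced multiplier-weighted Hessian-approximation matrix; meanwhile stationarity of $\bar x$ supplies limiting multipliers $\gamma_j=\lim_k\gamma^k_j$ with $\sum_j\nabla F(\bar x,\xi_{\bar\beta_j})\gamma_j=0$. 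Expressing $x_{k+1}-\bar x$ through these two identities and inserting $\nabla^2 F(x_k,\xi_{\bar\beta_j})$, I would bound $\|x_{k+1}-\bar x\|$ by $\tfrac1a$ times the sum of a Dennis--Moré term $\|(\nabla^2 F(x_k,\xi_{\bar\beta_j})-B(x_k,\xi_{\bar\beta_j}))p_k\|$, controlled by the condition recorded just before Lemma \ref{l} together with (iii), and a Taylor-remainder term controlled by the Hessian continuity (ii); the factor $\tfrac1a$ comes from (i). Since both terms are $o(\|p_k\|)$ and, once the full step is in force, $\|p_k\|$ and $\|x_k-\bar x\|$ are comparable, this yields $\|x_{k+1}-\bar x\|=o(\|x_k-\bar x\|)$, the asserted superlinear rate.

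The step I expect to be the main obstacle is the rate stage, specifically the passage from the nonsmooth minimax optimality of $p_k$ to a usable Newton-type identity: unlike scalar BFGS, $p_k$ solves not a single linear system but a maximum over several scenarios and over the compact multiplier set underlying $\Theta_e$, so one must show that the active multipliers $\gamma^k_j$ at $x_k$ converge to admissible multipliers $\gamma_j$ at $\bar x$ and that the Dennis--Moré estimate genuinely acts on the step $p_k$. A secondary point needing explicit justification is the comparability $\|p_k\|\asymp\|x_k-\bar x\|$ and the upgrade from the convergent subsequence at the accumulation point $\bar x$ to convergence of the whole tail of $\{x_k\}$; both follow once $\tau_k=1$ and the superlinear contraction are in place, but must be argued rather than assumed.
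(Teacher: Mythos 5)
Your first stage (acceptance of the unit step) is essentially the paper's own argument: a second-order Taylor expansion combined with the quasi-Newton error estimate \eqref{l-4.7} and condition (iv), so that the surplus term $\tfrac{\epsilon-a(1-\gamma)}{2}\lVert p_k\rVert^2$ is nonpositive and Step 4 accepts $\tau=1$ for large $k$. Whether one runs this through $\Theta_e$ and strong convexity of $\varphi_{x_k}(\beta^k,\cdot)$, as you do, or through the vector order $\preceq$ and the weighted first-order condition \eqref{4.18}, as the paper does, is immaterial; this part of your plan is sound and matches the paper.

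The rate stage is where your plan has a genuine gap, and it is exactly the obstacle you flag. Your Dennis--Mor\'e decomposition needs (a) convergence of the active multipliers $\gamma^k_j$ of the minimax subproblem, and (b) a way to pass from ``the $\gamma^k$-weighted gradient at $x_{k+1}$ is small'' to ``$\lVert x_{k+1}-\bar x\rVert$ is small,'' i.e.\ a nonsingularity or error-bound property of the limiting weighted Hessian at $\bar x$. Neither is available from hypotheses (i)--(iv): they bound only the approximations $B(x,\xi_j)$, not the true weighted Hessian at $\bar x$ (which the hypotheses allow to be singular), and nothing forces the minimax-active multipliers to settle down or converge at any rate. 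Moreover, your claim that the Dennis--Mor\'e and Taylor terms are $o(\lVert p_k\rVert)$ is an overstatement for a fixed $\epsilon$: conditions (ii)--(iii) only give bounds proportional to $\epsilon\lVert p_k\rVert$. The paper closes the proof without any of this machinery. From the modified lemma it gets $\bigl\lVert\sum_j\lambda_j\nabla F(x_k+p_k,\xi_{\beta^k_j})\bigr\rVert\le\epsilon\lVert p_k\rVert$, and from $aI\leq B$ applied to the first-order system defining the next step (see \eqref{4.18} and \eqref{1kg}) it gets the contraction of consecutive \emph{step norms} $\lVert p_{k+1}\rVert\le(\epsilon/a)\lVert p_k\rVert$ --- no multipliers, no error bound at $\bar x$. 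It then telescopes the resulting geometric series to obtain $\lVert\bar x-x_{k+1}\rVert\le\tfrac{t}{1+t}\lVert x_k-x_{k+1}\rVert$ and $\lVert\bar x-x_k\rVert\ge\tfrac{1}{1+t}\lVert x_k-x_{k+1}\rVert$, hence $\lVert\bar x-x_{k+1}\rVert/\lVert\bar x-x_k\rVert\le t$, valid for any prescribed $t\in(0,1)$ once $\epsilon<\min\bigl\{a(1-\gamma),\tfrac{t}{1+2t}a\bigr\}$. Note that this telescoping also delivers, as byproducts, the comparability $\lVert p_k\rVert\asymp\lVert x_k-\bar x\rVert$ and the whole-tail convergence that you listed as secondary obligations; the step-norm contraction is the single missing idea that makes the rest routine, and it is what replaces the multiplier analysis your route cannot complete under the stated assumptions.
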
 
\begin{proof}
Theorem \ref{4.4} established the convergence of the sequence $\{x_k\}$ to a stationary point $\bar{x}$. Given that $F(\cdot,\xi_{\beta_j})$ functions are twice continuously differentiable, it follows that for any $\epsilon>0$, there exists $\delta_\epsilon>0$ 
such that 
\begin{align*}
\mathcal{N}(\bar{x},\delta_\epsilon)\subset S \text{ and } \big\lVert B(x,\xi_{\beta_j})-B(y,\xi_{\beta_j})\big\rVert <\epsilon \text{ for all } x,y\in \mathcal{N}(\bar{x},\delta_\epsilon).
\end{align*}
Now, for $x\in S$, $\lambda_j\geq0$ and $\sum_{j=1}^{\omega(x)}\lambda_j=1 $, 
we define a function $\Phi_{\lambda}: S\times\mathbb{R}^n\to\mathbb{R}^m$ by
\begin{align*}
\Phi_{\lambda}(x,p):= \sum_{j=1}^{[\omega(x)]}\lambda_j\nabla F(x,\xi_{\beta_j})^\top p +\frac{1}{2}\sum_{j=1}^{[\omega(x)]}\lambda_j p^\top B(x, \xi_{\beta_j})p.
\end{align*}
Note that for any $\beta^k\in P_{x_k}$ and $j\in[\omega(x_k)]$, the matrix $ B(x_k, \xi_{\beta^k_j})$ is $K$ positive definite.  
Furthermore, for any $x_k\in\mathbb{R}^n$ and $\beta^k\in P_{x_k}$, the set $P_{x_k}$ is finite. Therefore, the function $\Phi_{\lambda}(x,\cdot)$ is strictly convex in $\mathbb{R}^n$, and hence the function attains its minimum. Observe that $\Phi_{\lambda}$ will attain minimum value when 
\begin{align}\label{4.18}
~&~\sum_{j=1}^{[\omega(x_k)]}\lambda_j\nabla F(x_k,\xi_{\beta_j^k}) +\sum_{j=1}^{[\omega(x_k)]}\lambda_j B(x_k, \xi_{\beta_j^k})p_k=0.\\
\nonumber \implies p_k=&-\biggl[\sum_{j=1}^{[\omega(x_k)]}\lambda_j B(x_k, \xi_{\beta_j^k})\biggl]^{-1}\sum_{j=1}^{[\omega(x_k)]}\lambda_j\nabla F(x_k,\xi_{\beta_j^k})\\
\nonumber \implies p_k\leq & -\frac{1}{a}\sum_{j=1}^{[\omega(x_k)]}\lambda_j\nabla F(x_k,\xi_{\beta_j^k})~ \text{ since } aI\leq B(x_k,\xi_{\beta^k_j})\\
\implies p_k\leq & -\frac{1}{a} \underset{j\in[\omega(x_k)]}{\text{max}}\nabla F(x_k,\xi_{\beta_j^k}).\label{1kg}
\end{align}
Note that $\sum_{j=1}^{[\omega(x)]} \lambda_j b_j\leq\text{max} \{b_1,b_2,\ldots,b_{[\omega(x)]}\}$ holds, where $(\lambda_1,\lambda_2,\ldots,\lambda_{[\omega(x)]})\in\mathbb{R}^{\omega(x)}_{+}$ with $\sum_{j=1}^{[\omega(x)]} \lambda_j =1$ and $b_j$'s  in $\mathbb{R}$. This identity is applied in obtaining \eqref{1kg}. For the sequence $\{x_k\}$, using Lemma \ref{l}, we have 
\allowdisplaybreaks 
\begin{align}
\nonumber&\left\lVert \sum_{j=1}^{[\omega(x_{k+1})]}\lambda_j\nabla F(x_k+p_k,\xi_{\beta_j^k})-\biggl[\sum_{j=1}^{[\omega(x_k)]}\lambda_j\nabla F(x_k,\xi_{\beta_j^k})+\sum_{j=1}^{[\omega(x_k)]}\lambda_j B(x_k,\xi_{\beta_j^k})p_k\biggl]\right\rVert\leq\epsilon\big\lVert p_k \big\rVert\\
\nonumber \text{i.e.},~ &\left\lVert \sum_{j=1}^{[\omega(x_{k+1})]}\lambda_j\nabla F(x_k+p_k,\xi_{\beta_j^k})\right\rVert \leq\epsilon \big\lVert p_k \big\rVert\\
\nonumber \text{i.e.},~&\bigg\lVert\underset{j\in[\omega(x_{k+1})]}{\text{max}}\nabla F(x_k+p_k,\xi_{\beta_j^k})\bigg\rVert \leq\epsilon\big\lVert p_k \big\rVert\\
\text{i.e.},~& \big\lVert p_{k+1} \big\rVert \leq \frac{\epsilon}{a}\big\lVert p_k \big\rVert ~\text{ using }\eqref{1kg} \label{4.20}.
\end{align}
As the sequence $\{x_k\}$ converges to $\bar{x}$, there exists $k_{\epsilon}\in \mathbb{N}$ such that for all $k\geq k_{\epsilon}$, we have 
\begin{align*}
x_k,~ x_k+p_k \in\mathcal{N}(\bar{x},\delta_{\epsilon}).
\end{align*}
Now, using the second-order Taylor expansion for $j=1,2,\ldots,[\omega(x_k)]$, we have 
\allowdisplaybreaks
\begin{align*}
  ~&~ F(x_k+p_k,\xi_{\beta_j^k}) \\ 
 \leq ~&~ F(x_k,\xi_{\beta_j^k})+ \nabla F(x_k,\xi_{\beta_j^k})^\top p_k+ \frac{1}{2} p_k^\top B(x_k,\xi_{\beta_j^k}) p_k +\frac{\epsilon}{2} \big\lVert p_k \big\rVert^2\\
 \leq ~&~ F(x_k,\xi_{\beta_j^k})+\gamma \big( \nabla F(x_k,\xi_{\beta_j^k})^\top p_k+ \frac{1}{2} p_k^\top B(x_k,\xi_{\beta_j^k}) p_k\big)\\ ~&~ + (1-\gamma) \big(\nabla F(x_k,\xi_{\beta_j^k})^\top p_k+ \frac{1}{2} p_k^\top B(x_k,\xi_{\beta_j^k}) p_k\big)+ \frac{\epsilon}{2} \big\lVert p_k \big\rVert^2\\
 \leq ~&~ F(x_k,\xi_{\beta_j^k})+\gamma \big( \nabla F(x_k,\xi_{\beta_j^k})^\top p_k+ \frac{1}{2} p_k^\top B(x_k,\xi_{\beta_j^k}) p_k\big)\\ ~&~ + (1-\gamma)\left(\sum_{j=1}^{[\omega(x_k)]}\lambda_j\nabla F(x_k,\xi_{\beta_j^k})^\top p_k+\frac{1}{2}p_k^\top\left(\sum_{j=1}^{[\omega(x_k)]}\lambda_j B(x_k,\xi_{\beta_j^k})\right)p_k\right)+\frac{\epsilon}{2} \big\lVert p_k \big\rVert^2\\
=~&~F(x_k,\xi_{\beta_j^k})+\gamma \big( \nabla F(x_k,\xi_{\beta_j^k})^\top p_k +\frac{1}{2} p_k^\top B(x_k,\xi_{\beta_j^k}) p_k\big)\\ ~&~ + (1-\gamma)\biggl(-p_k^\top\biggl(\sum_{j=1}^{[\omega(x_k)]}\lambda_j B(x_k,\xi_{\beta_j^k})\biggl)p_k\\
   ~&~ +\frac{1}{2}p_k^\top\biggl(\sum_{j=1}^{[\omega(x_k)]}\lambda_j B(x_k,\xi_{\beta_j^k})\biggl)p_k\biggl)+\frac{\epsilon}{2} \big\lVert p_k \big\rVert^2 \text{ using }\eqref{4.18}\\
\leq ~&~F(x_k,\xi_{\beta_j^k})+\gamma \big(\nabla F(x_k,\xi_{\beta_j^k})^\top p_k +\frac{1}{2} p_k^\top B(x_k,\xi_{\beta_j^k}) p_k\big)-\frac{a(1-\gamma)}{2}\big\lVert p_k \big\rVert^2 +\frac{\epsilon}{2} \big\lVert p_k \big\rVert^2 \\
\leq ~&~F(x_k,\xi_{\beta_j^k})+\gamma \big(\nabla F(x_k,\xi_{\beta_j^k})^\top p_k +\frac{1}{2} p_k^\top B(x_k,\xi_{\beta_j^k}) p_k\big)+\frac{\epsilon-a(1-\gamma)}{2}\big\lVert p_k \big\rVert^2,
\end{align*}
where from condition (iv), we conclude that 
\begin{align*}
\epsilon\leq a(1-\gamma) \implies \frac{\epsilon- a(1-\gamma)}{2}\leq 0,
\end{align*}
and $\tau_k =1$ holds for $k\geq k_{\epsilon}$. Now, for $k\geq k_{\epsilon}$, we have 
\begin{align*}
    \big\lVert x_{k+1} -x_{k+2} \big\rVert = \big\lVert p_{k+1} \big\rVert\leq\frac{\epsilon}{a}\big\lVert p_k \big\rVert =\frac{\epsilon}{a}\big\lVert x_k -x_{k+1} \big\rVert, \text{ using } \eqref{4.20},
\end{align*}
and for any $k\geq 1$ and $j\geq 1$, we obtain
\begin{align*}
\big\lVert x_{k+j} -x_{k+j+1} \big\rVert &\leq \frac{\epsilon}{a}\big\lVert x_{k+j-1} -x_{k+j} \big\rVert 
 \leq (\frac{\epsilon}{a})^2\big\lVert x_{k+j-2} -x_{k+j-1} \big\rVert 
 \leq \cdots \leq (\frac{\epsilon}{a})^j\big\lVert x_k -x_{k+1} \big\rVert.
\end{align*}
Now, to prove superlinear convergence, take $t\in(0,1)$ and define 
\begin{align*}
\bar{\epsilon}:=\text{min}\big\{a(1-\gamma),\frac{t}{1+2t}a\big\}.
\end{align*}
If we take $\epsilon<\bar{\epsilon}$ and $k\geq k_{\epsilon}$, then by convergence of sequence $\{x_k\}$ to $\bar{x}$, we have 
\begin{align}\label{T-4.8}
\big\lVert \bar{x}-x_{k+1} \big\rVert \leq \sum_{j=1}^{\infty} \big\lVert x_{k+j}-x_{k+j+1} \big\rVert  \leq \sum_{j=1}^{\infty} \big(\frac{t}{1+2t}\big)^j\big\lVert x_k-x_{k+1} \big\rVert = \frac{t}{1+t}\big\lVert x_k-x_{k+1}\big\rVert.
\end{align}
Therefore, we obtain
\begin{align}\label{T-4.9}
\big\lVert \bar{x}-x_k \big\rVert\geq \big\lVert x_k-x_{k+1} \big\rVert-\big\lVert x_{k+1}-\bar{x} \big\rVert \geq \frac{1}{1+t}\big\lVert x_k-x_{k+1} \big\rVert.
\end{align}
Combining the inequalities \eqref{T-4.8} and \eqref{T-4.9}, we can conclude that if $\epsilon<\bar{\epsilon}$ and $k\geq k_{\epsilon}$, then 
\begin{align*}
\frac{\big\lVert \bar{x}-x_{k+1} \big\rVert}{\big\lVert \bar{x}-x_k \big\rVert} \leq t.
\end{align*}
As $t$ lies in the interval $(0,1)$, we conclude that sequence $\{x_k\}$ converges superlinear to $\bar{x}$.
\end{proof}

\section{Numerical Illustration}\label{section:5}

In this section, we show the performance of the proposed quasi-Newton Algorithm \ref{QNM_algoritm} on several numerical examples. The testing of Algorithm \ref{QNM_algoritm} is conducted in MATLAB R2023b. The software is installed on an IOS system with an 8-core CPU and 8 GB RAM. For the numerical execution of the Algorithm \ref{QNM_algoritm}, we have selected the following parameter values:
\begin{itemize}
\item In our tests, the cone $K$ is chosen as the standard ordering cone, specifically $K=\mathbb{R}^{2}_{+}$ for all cases except in Example \ref{Example4}. Additionally, we set the parameter $e = (1,1,\ldots,1)^\top\in \text{int}(K)$ for the scalarizing function $\Theta_e$.
\item The parameter $\gamma$ in Step $4$ of the line search in Algorithm \ref{QNM_algoritm} was set to $\gamma = 0.1$.
\item For the stopping condition, we choose $\lVert p_k\rVert \leq 0.0001$, or a maximum number of 1000 iterations are reached.
\item To find the set \text{Max}$(F_U(x_k), K)$ with respect to upper set less relation at the $k$-th iterations in Step 1 of Algorithm \ref{QNM_algoritm}, we implemented the common method of pair-wise comparison the elements in $F_U(x_k)$. 
\item In the $k$-th iteration of Step 2 in Algorithm \ref{QNM_algoritm}, for each $\beta\in P_k$, we determine a minimizer $p$ of the strictly convex problem $\underset{p\in\mathbb{R}^n}{\text{min}}\varphi_x(\beta,p)$. Next, we identify 
\begin{align*}
    (\beta_k,p_k) = \underset{\beta\in P_k}{\text{argmin }} \varphi_{x_k}(\beta,p)
\end{align*}
with the help of the inbuilt function $fmincon$ in MATLAB.
\item We take some test problems from the literature, while some are freshly introduced in this paper. For each problem, we generate 100 random initial points and make a three-column table. The following values have been collected for each of the examples from Examples \ref{Example4} to \ref{Example5}: 
\begin{enumerate}
\item \textbf{Initial Points}: The value represents the first column in the table, indicating the number of random initial points used in applying Algorithm \ref{QNM_algoritm}. 
\item \textbf{Iterations}: The second column with a 6-tuple (Min, Max, Mean, Median, Mode, SD) indicates the minimum, maximum, mean, median, mode, and standard deviation of the number of iterations in which the stopping condition is reached.
\item \textbf{CPU Time}: The third column comprises another 6-tuple (Min, Max, Mean, Median, Mode, SD) that shows the minimum, maximum, mean, median, least integer greater or equal to mode, and standard
deviation of the CPU time (in seconds) taken by Algorithm \ref{QNM_algoritm} in reaching the stopping condition.
\end{enumerate}
\end{itemize}
Moreover, numerical values are displayed with precision up to four decimal places to enhance clarity. In each problem, the values of $F$ at each iteration are graphically depicted using different colors: the initial points are presented in black and the final points in red. The intermediate points are represented in blue.  

Additionally, we evaluate the performance of the proposed quasi-Newton method (abbreviated as QNM) algorithm (Algorithm \ref{QNM_algoritm}) by comparing it with the existing Newton method (abbreviated as NM) for each of the four Examples \ref{Example4} to \ref{Example5}.

\begin{example}\label{Example4}
Let the uncertainty set be $U:= \{0.1, 0.2, 0.3, 0.4\}$. Consider the UVOP  with the bi-objective function $F : \mathbb{R} \times U \to \mathbb{R}^2$ defined as
\begin{align*}
F(x,\xi) := \mqty(2x^2+e^{\frac{x}{10}}+\frac{(10\xi-3)}{2}\\ 5x\text{cos}(x)+\frac{(-10\xi+3)}{2}\text{sin}^2(x)).
\end{align*}
The ordering cone $K$ is given by 
\begin{align*}
K:=\{(z_1,z_2)^\top \in\mathbb{R}^2 : 3z_1-z_2\geq 0, -z_1+3z_2\geq 0\}.
\end{align*}
 In Figure \ref{figrue4}, the outcome of Algorithm \ref{QNM_algoritm} is illustrated for a randomly selected starting point within the set $[-4.7,4.7]$. It can be seen that the points depicted with red color are optimal points of $F_U$ as the set $(1.5680, 0.7815)^\top-K$ does not contain any element of $F_U$ other than $(1.5680, 0.7815)^\top$ for all $x\in[-4.7, 4.7]$.

The performance of Algorithm \ref{QNM_algoritm} and comparing it with the Newton method \cite{17} for Example \ref{Example4} is shown in Table \ref{Table4}. As usual, the Newton method performs better than the quasi-Newton method since for each $\xi$ in $U$, the function $F(\cdot,\xi)$ is strongly convex, and the Newton method has a faster rate (quadratic, see \cite{17}) of convergence than the superlinear rate of the proposed quasi-Newton method.

\begin{table}[H]  
\caption{  Performance of Algorithm \ref{QNM_algoritm} on Example \ref{Example4}}\label{Table4} % title name of the table  
\centering % centering table
\begin{adjustbox}{max width=\textwidth}
\begin{tabular}{c c c c} % creating 10 columns  
\thickhline \thickhline
 Number of & Algorithm & Iterations  &{CPU time}\\
 successive points & & (Min, Max, Mean, Median, Mode, SD) & (Min, Max, Mean, Median, Mode, SD)
\\ [0.5ex]  
\hline  
% Entering 1st row  
 100 & QNM & (0, 4, 0.9800, 0, 0, 1.2551) &(0.1887, 1.4882, 0.5055, 0.2645, 0.1887, 0.3518) \\[0.5ex] 
\hline
100 & NM & (0, 3, 0.7300, 1, 0, 0.8391) & (0.2311, 1.9947, 0.6042, 0.6428, 0.2311, 0.4036)\\[0.5ex]
\thickhline % inserts single-line  
\end{tabular}
\end{adjustbox}
\end{table}

\begin{figure} 
\includegraphics[width=0.6\textwidth]{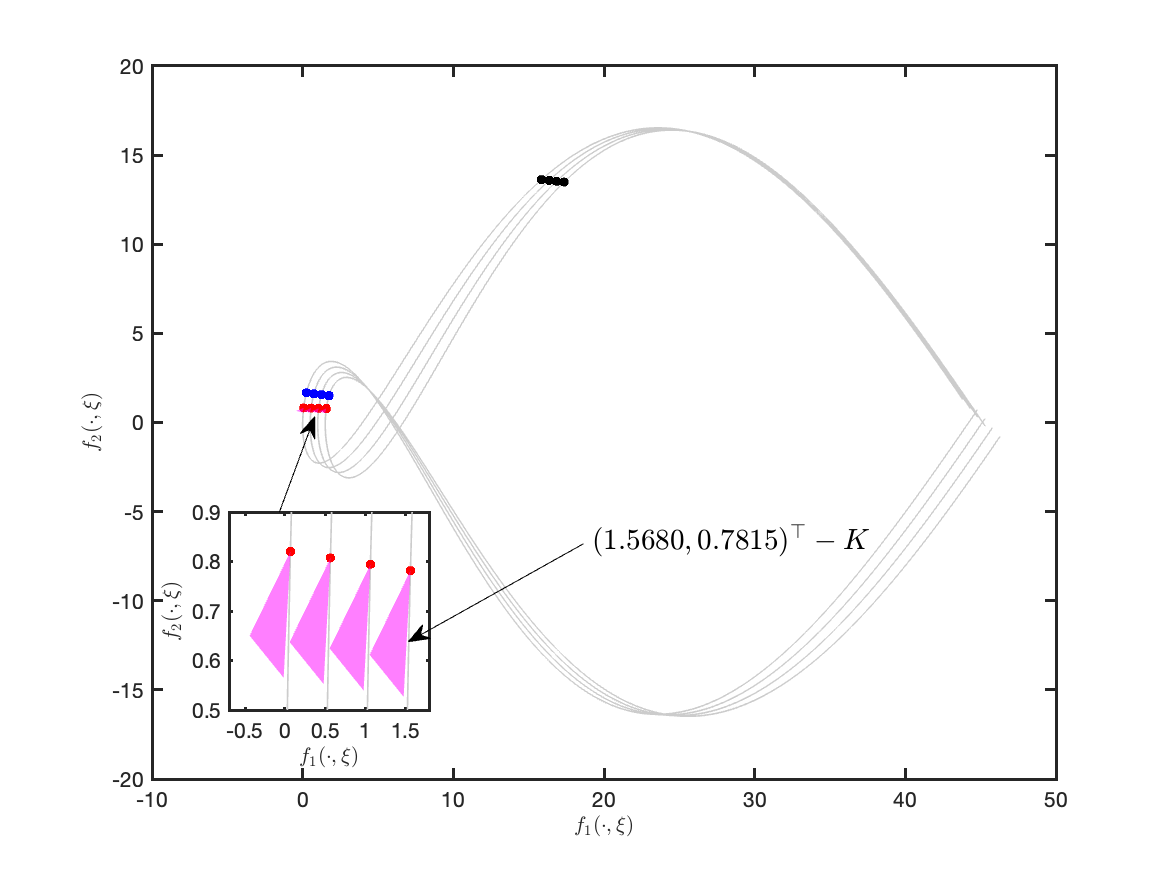} 
\caption{The value of $F_U$ at each iteration generated by Algorithm \ref{QNM_algoritm} for the initial point $x_0 = -2.8372$ for Example \ref{Example4}} \label{figrue4}
\end{figure}
\end{example}

\begin{example}\label{Example1}
Let the uncertainty set be $U:= \{0.1, 0.2, \ldots, 4.5\}$. Consider the UMOP  with the bi-objective function $F : \mathbb{R}^2 \times U \to \mathbb{R}^2$ defined as 
\begin{align*}
 F(x,\xi) := \mqty(x_1^2+x_2^2+0.5 \text{ sin}\left(\frac{2\pi(10\xi-1)}{60}\right)\text{cos}\left(\frac{2\pi(10\xi-1)}{60}\right)^2+2 e^{(x_1+x_2)} \\2x_1^2+2x_2^2+0.5 \text{ cos }\left(\frac{2\pi(10\xi-1)}{60}\right)^2).
\end{align*}
In Figure \ref{figure1}, the iterates generated by Algorithm \ref{QNM_algoritm} for different initial points taken from the set $[-1.5,1.5]\times [-1, 0.5]$ are given. The sequence of iterates $\{x_k\}$ and the corresponding $\{F(x_k)\}$ generated by Algorithm \ref{QNM_algoritm} for a selected initial point $x_0 = (1.5, 0.5)^\top$ are given in Figure \ref{example 1a} and Figure \ref{example 1b}, respectively.

The performance of Algorithm \ref{QNM_algoritm} and comparing it with the Newton method \cite{17}  for Example \ref{Example1} is shown in Table \ref{Table1}. In this example also, the Newton method performs better than Algorithm \ref{QNM_algoritm} because $F$ is strongly convex.

\begin{table}[H]  
\caption{ Algorithm \ref{QNM_algoritm} Performance of Algorithm \ref{QNM_algoritm} on Example \ref{Example1}}\label{Table1} % title name of the table  
\centering % centering table
\begin{adjustbox}{max width=\textwidth}
\begin{tabular}{c c c c} % creating 10 columns  
\thickhline \thickhline
Number of & Algorithm & Iterations  &{CPU time}\\
successive points & & (Min, Max, Mean, Median, Mode, SD) & (Min, Max, Mean, Median, Mode, SD)
\\ [0.5ex]  
\hline  
% Entering 1st row  
 100 & QNM & (0, 11, 4.0220, 4, 3, 2.4266) &(0.0213, 32.3012, 14.2624, 13.2682, 2.3446, 6.8122) \\[0.5ex]
\hline 
% Entering 2nd row  
100  & NM & (0, 11, 1.4700, 1, 1, 1.5005) &  (0.0191, 12.1958, 1.2327, 1.0054, 0.0191, 1.3375)  \\[0.5ex]  
% [1ex] adds vertical space  
\thickhline % inserts single-line  
\end{tabular}
\end{adjustbox}
\end{table}

\begin{figure} 
     \centering
    \subfloat[The value of $F_U$ at each iteration generated by Algorithm \ref{QNM_algoritm} for the initial point $x_0 =(1.5,0.5)$ for Example \ref{Example1} \label{example 1a}]
{\includegraphics[width=0.45\textwidth]{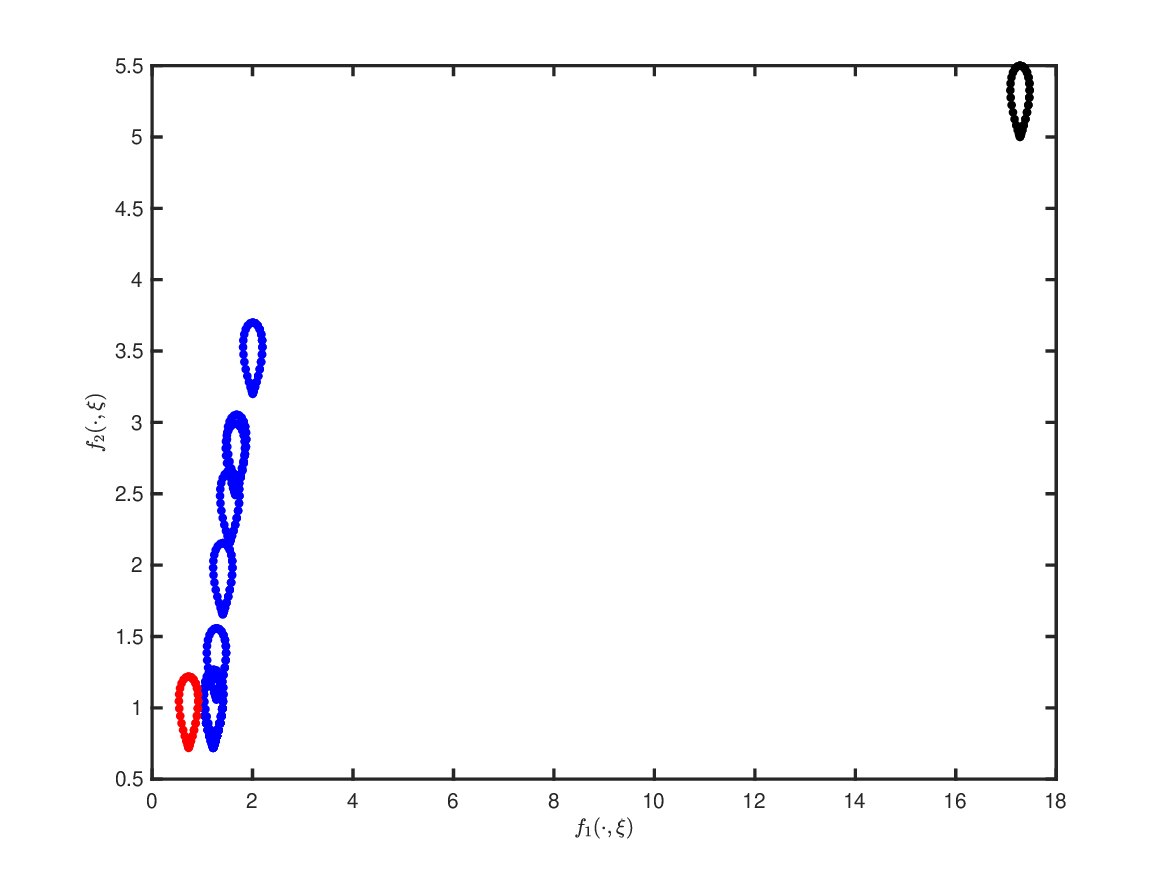}}
\hspace{0.25cm}
\subfloat[The value of $x_k$ at each iteration generated by Algorithm \ref{QNM_algoritm} for the initial points $x_0=(1.5,0.5)$ for Example \ref{Example1}\label{example 1b}]
{\includegraphics[width=0.45\textwidth]{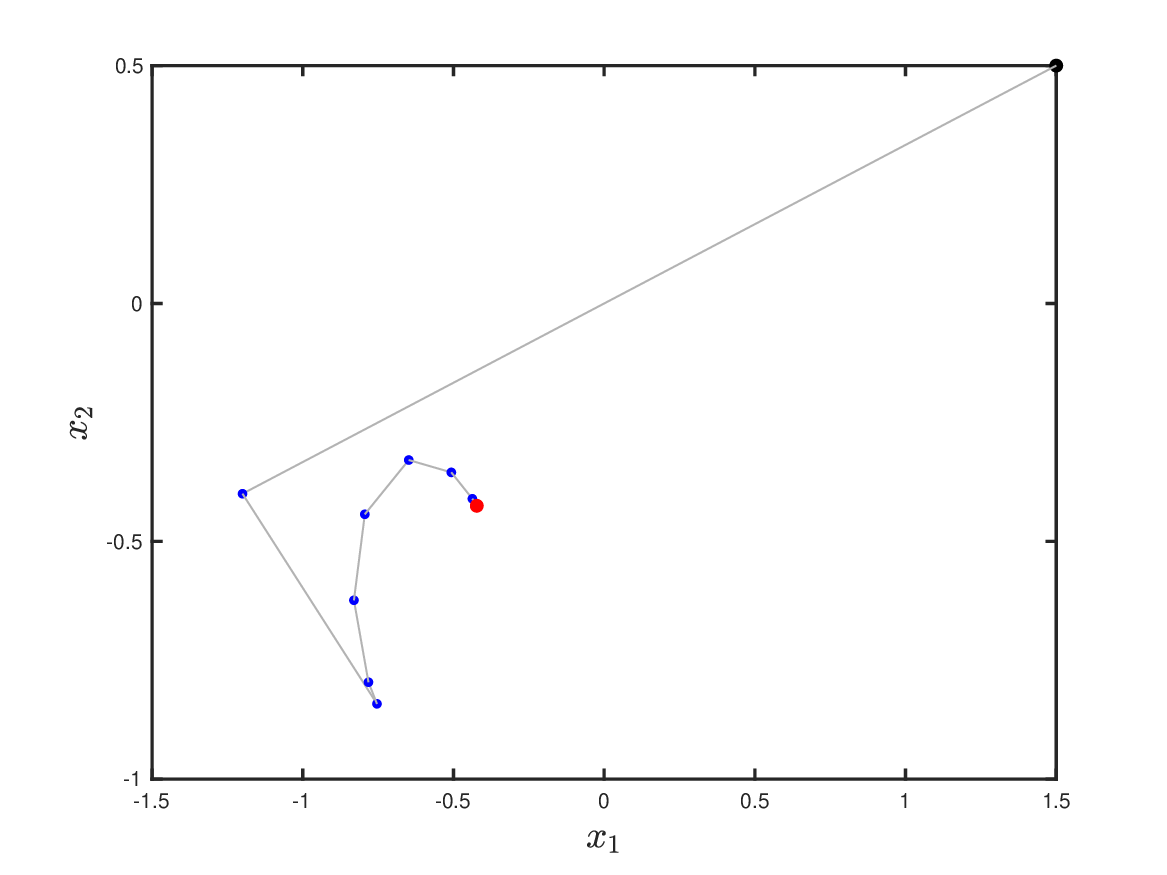}}
\caption{Output of Algorithm \ref{QNM_algoritm} for Example \ref{Example1}} \label{figure1}
\end{figure}
\end{example}

\begin{example}\label{Example3}
Let the uncertainty set be $U:= \{0.1, 0.2, \ldots, 5.0\}$. Consider the UMOP  with the tri-objective function $F : \mathbb{R}^2 \times U \to \mathbb{R}^3$ defined as
\begin{align*}
F(x,\xi) := \mqty(x_1^2+(0.5)\text{ sin }\left(\frac{2\pi(10\xi-1)}{50}\right)-0.1\xi x_1 \\2x_1^2+(0.5)\text{ cos }\left(\frac{2\pi(10\xi-1)}{50}\right)+0.2\xi x_2\\x_1^2+x_2^2+10\xi).
\end{align*}
The sequence of iterates $\{x_k\}$ and the corresponding $\{F_U(x_k)\}$ generated by Algorithm \ref{QNM_algoritm} for a selected initial point $x_0 = (1, -1)^\top$ are given in Figure \ref{example 3a} and Figure \ref{example 3b}, respectively.

Note that the objective function of this example does not fulfill the prerequisites of the Newton method given in \cite{17} because Hessian matrices are not positive definite at any given point. Therefore, in Table \ref{Table3}, we provide only the performance of the proposed quasi-Newton quasi-Newton method.

\begin{table}   
\caption{ Performance of Algorithm \ref{QNM_algoritm} on Example \ref{Example3}}\label{Table3} % title name of the table  
\centering % centering table
\begin{adjustbox}{max width=\textwidth}
\begin{tabular}{c c c c} % creating 10 columns  
\thickhline \thickhline
 Number of & Algorithm & Iterations  &{CPU time}\\
 successive points & & (Min, Max, Mean, Median, Mode, SD) & (Min, Max, Mean, Median, Mode, SD)
\\ [0.5ex]  
\hline  
% Entering 1st row  
 100 & QNM & (0, 5, 1.6383, 1, 1, 1.5300) &(6.3462, 54.3345, 21.1840, 15.9259, 6.3462, 12.8506) \\[0.5ex]
\thickhline % inserts single-line  
\end{tabular}
\end{adjustbox}
\end{table}

\begin{figure} 
     \centering
    \subfloat[The value of $F_U$ at each iteration generated by Algorithm \ref{QNM_algoritm} for the initial point $x_0 =(1, -1)$ for Example \ref{Example3} \label{example 3a}]
{\includegraphics[width=0.5\textwidth]{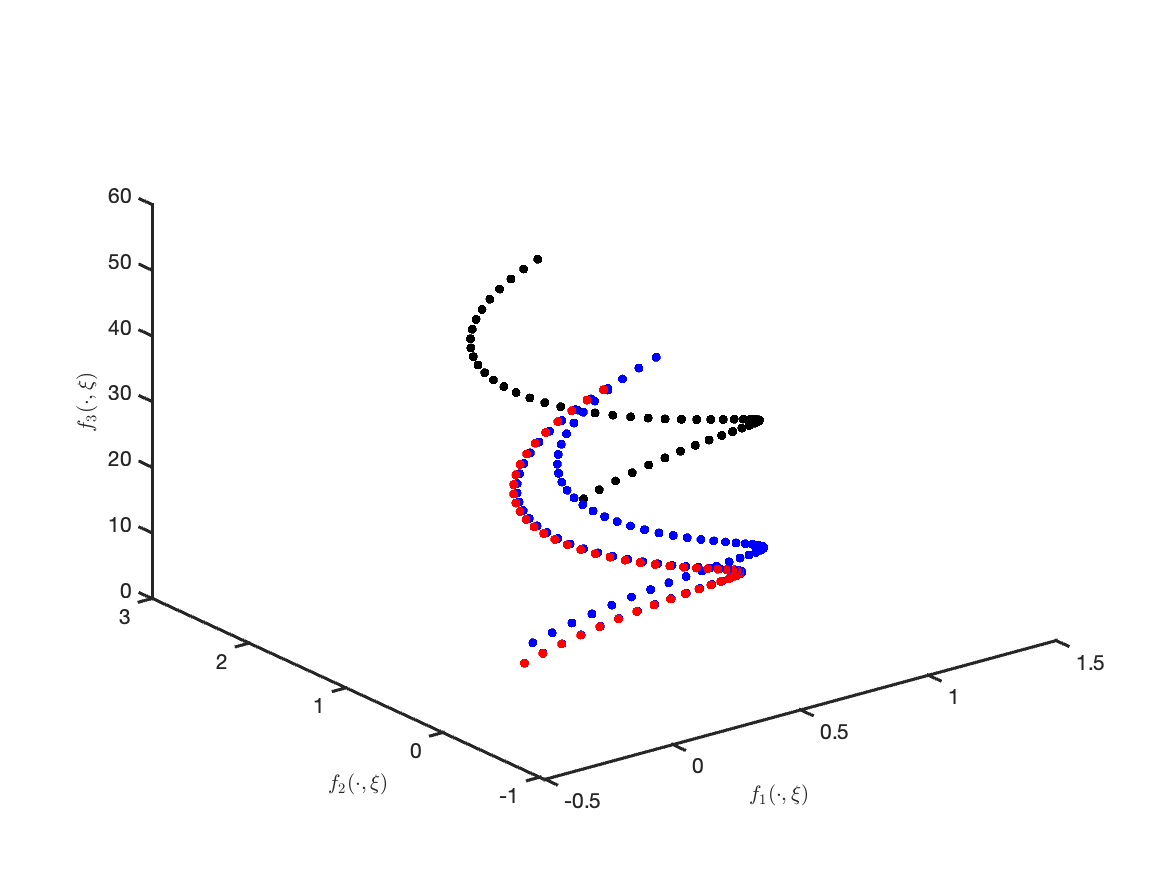}}
\hspace{0.3cm}
\subfloat[The value of $x_k$ at each iteration generated by Algorithm \ref{QNM_algoritm} for the initial points $x_0=(1, -1)$ for Example \ref{Example3} \label{example 3b}]
{\includegraphics[width=0.4\textwidth]{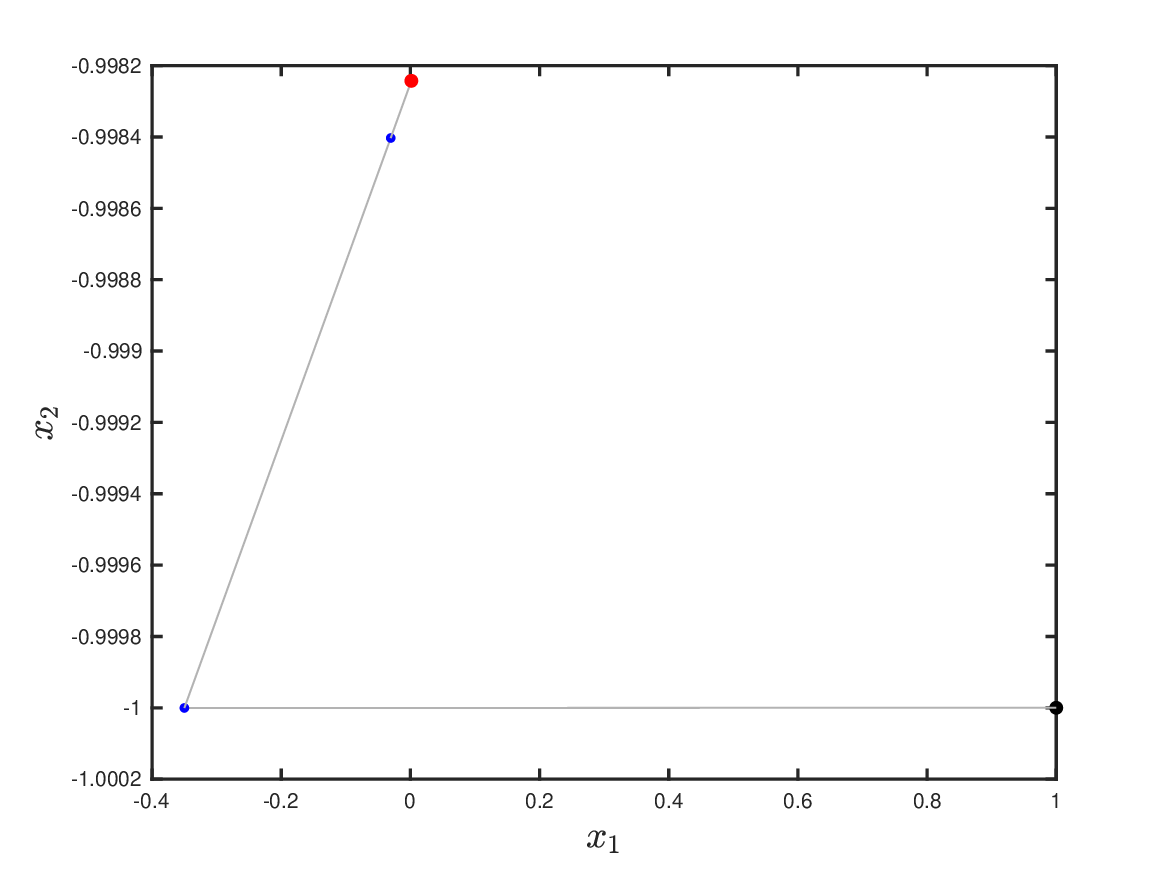}}
    \caption{Output of Algorithm \ref{QNM_algoritm} for Example \ref{Example3}}\label{figrue3}
\end{figure}
\end{example}

\begin{example} \cite{17} \label{Example5}
Consider the UMOP with the tri-objective function $F : \mathbb{R}^2 \times U \to \mathbb{R}^3$ defined as
\begin{align*}
F(x,\xi) := \frac{1}{2}\mqty(\lVert x-l_1-\xi\rVert^2\\ \lVert x-l_2-\xi\rVert^2\\\lVert x-l_3-\xi\rVert^2), 
\end{align*}
where $l_1: = \mqty(0\\0), l_2 := \mqty(8\\0)$ and $l_3 := \mqty(0\\8).$ We consider a uniform partition set of 10 points of the interval $[-1,1]$ given by 
\begin{align*}
  V := \left\{-1,-1+\frac{1}{s}, -1+\frac{2}{s},\ldots, -1+\frac{(2s-1)}{s},1\right\} ~\text{with}~ s = 4.5. 
\end{align*}
A scenario $\xi := (\xi_1, \xi_2)$ is a member of the uncertainty set $U:=V\times V$. In Figure \ref{figrue5}, $100$ initial points were generated within the square $[-50,50]\times [-50,50]$. The grey points illustrate the set $(l_1+\xi)\cup(l_2+\xi)\cup(l_3+\xi)$ and the positions of $l_1,l_2,l_3$ are shown in blue colour. The value of initial and final points generated by the Algorithm \ref{QNM_algoritm} are depicted by black and red, respectively, in Figure \ref{figrue5}. There are no intermediate points in Figure \ref{figrue5} because the Algorithm \ref{QNM_algoritm} has taken only one iteration to reach stationary points from these chosen initial points.

The performance of Algorithm \ref{QNM_algoritm} and comparing it with the Newton method \cite{17}  for Example \ref{Example5} is shown in Table \ref{Table5}. As usual, in this example, also the Newton method outperforms the quasi-Newton method since the function $F$ is strongly convex for each $\xi$, and the Newton Method has a quadratic rate of convergence.  

\begin{table} 
\caption{ Performance of Algorithm \ref{QNM_algoritm} on Example \ref{Example5}}\label{Table5} % title name of the table  
\centering % centering table
\begin{adjustbox}{max width=\textwidth}
\begin{tabular}{c c c c} % creating 10 columns  
\thickhline \thickhline
 Number of & Algorithm & Iterations  &{CPU time}\\
 successive points & & (Min, Max, Mean, Median, Mode, SD) & (Min, Max, Mean, Median, Mode, SD)
\\ [0.5ex]  
\hline  
% Entering 1st row  
 100 & QNM & (0, 2, 1.0800, 1, 1, 0.3075) &(6.1637, 12.7476, 9.8998, 8.5954, 6.1637, 2.0854) \\[0.5ex] 
 \hline 
 100 & NM & (0, 2, 1.0782, 1, 1, 0.3105) & (2.3863, 12.7695, 6.4920, 4.2540, 2.3863, 3.9367)\\[0.5ex] 
\thickhline % inserts single-line  
\end{tabular}
\end{adjustbox}
\end{table}

\begin{figure} 
\includegraphics[width=0.5\textwidth]{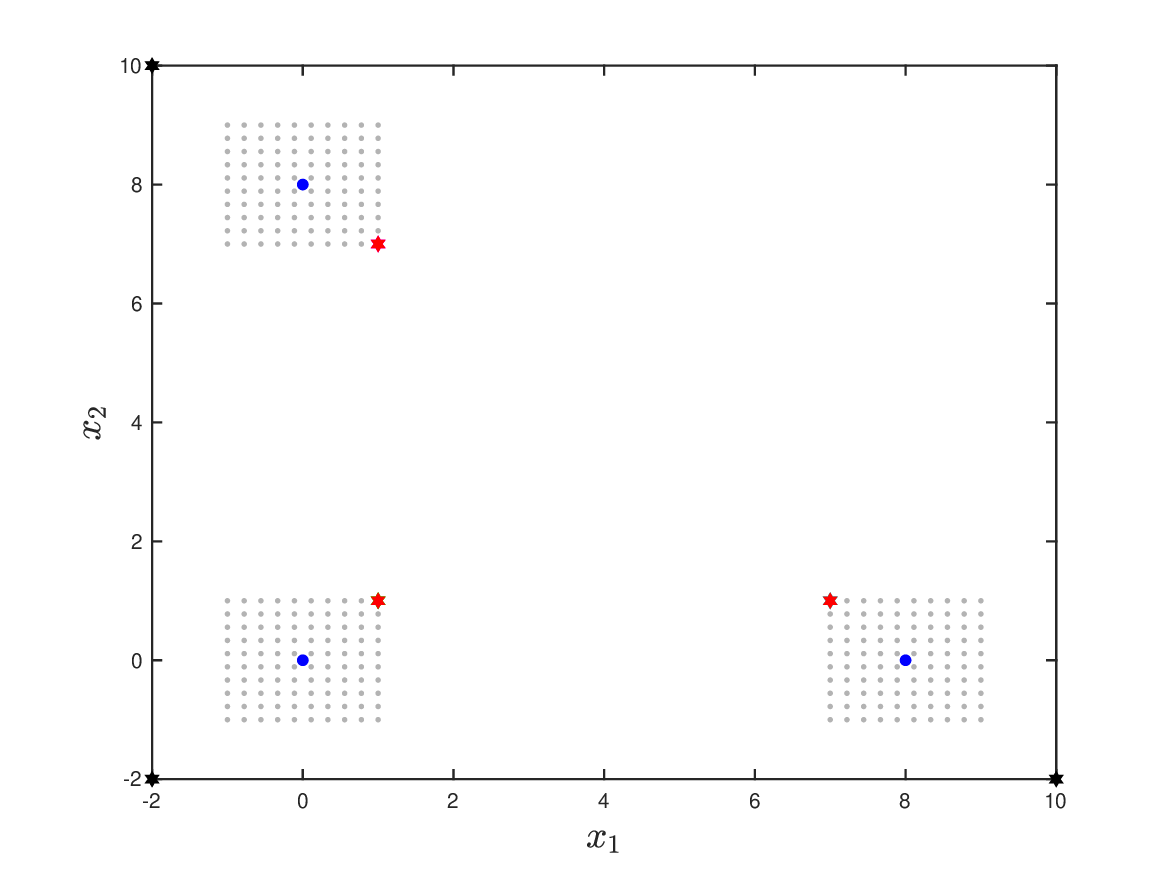} 
\caption{Output of Algorithm \ref{QNM_algoritm} for Example \ref{Example5}: The value of $x_k$ at each iteration for three different initial points: $\{(-2,-2)^\top,(-2,10)^\top,$ and $(10,-2)^\top\}$, as generated by Algorithm \ref{QNM_algoritm} in \ref{Example5}}\label{figrue5}
\end{figure}
\end{example}

Next, we generate a few test problems and show the performance of the proposed Algorithm \ref{QNM_algoritm} on those problems. These test problems are either newly introduced or generated from the commonly used test problems in the literature of multiobjective optimization. The list of the generated test problems is given in Table \ref{test_problem}. In the last column of the table, we mention the source of the associated multiobjective problem. For example, the problem P3 is generated from the bi-objective test problem BK1 from \cite{32}. In the conventional bi-objective test problem BK1, we have incorporated the uncertainty variables $\xi_1$ and $\xi_2$ to generate the uncertain bi-objective test problem P3; note that for $\xi_1 = 1$ and $\xi_2 = 1$, the problem P3 reduces to BK1 problem. Similarly, all other problems, except P2 and P7, are generated from the sources mentioned in the last column of Table \ref{test_problem}. Problems P2 and P7 are newly introduced problems for UMOP.

The performance of Algorithm \ref{QNM_algoritm} on the test problems from Table \ref{Table5} are provided in Table \ref{Table6_performance}. In Figure \ref{iterations_on_test_problems}, we have shown the generated 
robust weakly efficient point corresponding to a randomly chosen initial point for test problems taken from Table \ref{test_problem}. The black-colored points in each figure collectively present the value of $F_U(x_0)$ at the initial point $x_0$, blue-colored points present $F_U(x_k)$ for intermediate $x_k$'s, and the red-colored points collectively represent $F_U(\bar x)$, where $\{x_k\}$ is the sequence generated by Algorithm \ref{QNM_algoritm} corresponding to a given initial point $x_0$ and $x_k \to \bar x$. The point $\bar x$ is a robust weakly efficient point of the corresponding point.

\begin{table} 
    \caption{Test problems}\label{test_problem} % title name of the table  
\centering % centering table
\resizebox{17cm}{!}{
\begin{tabular}{c c l c c c c c } % creating 10 columns  
\toprule
\multirow{2}{*}{Problem} & \multirow{2}{*}{($m$, $n$, $r$)} &  \multirow{2}{*}{Modified $f$}   & \multicolumn{2}{c}{Domain for $x$ } & \multicolumn{2}{c}{\text{Domain for scenarios}}  & \multirow{2}{*}{Reference for $f$}\\
 \cmidrule(rl){4-5} \cmidrule(rl){6-7}  
 &  &       & $lb^\top$ & $ub^\top$ & {$lb^\top$} &{$ub^\top$}   &
\\ [0.5ex]  
\midrule  
% Entering 1st row
P1\label{P1} &(2,1,1) & $\begin{aligned}
     f_1(x, \xi) :=& x^2+2(\xi x+1)^2+2(\xi x-1)^2+x^3  \\ f_2(x, \xi) :=&(x-2\xi)^2+\xi^3 x+\xi x^3
 \end{aligned}$ &   $-1$ & $0.5$& $-2$ & $2$ & MOP1 \cite{17} \\[3ex]
\hline
P2\label{P2} &(2,1,1) & $\begin{aligned}
     f_1(x, \xi) :=& x+\tfrac{1}{140}\big(10+\exp\left\{\sin(\tfrac{2\pi\xi}{50})\right\}- \sin(\tfrac{4\pi\xi}{50})\big)\\&+\xi x\\ 
    f_2(x, \xi) :=& \cos(3x)+\tfrac{1}{1+\exp{(2x)}}+\frac{1}{140}\big(10+\exp\left\{\sin(\tfrac{2\pi\xi}{50})\right\}\\ &- \sin(\tfrac{4\pi\xi}{50})\big)\\    
 \end{aligned}$ &   $-1$ & $-0.7$& $1$ & $9$ & New1  \\[3ex]
\hline
P3\label{P3} &(2,2,2) & $\begin{aligned}
     f_1(x, \xi) :=& \xi_1 x_1^2 + \xi_2 x_2^2+\xi_2(x_2-5)^2\\&+(\xi_1x_1-2)^2+(\xi_2x_2-2)^2 \\ f_2(x, \xi) :=&\xi_1x_1^2+\xi_2x_2^2+(\xi_1x_1 - 5)^2 
 \end{aligned}$ &  $(-3, -3)$ & $(5, 5)$& $(-2,-2)$ & $(2,2)$ & BK1 \cite{32} \\[3ex]
 % Entering 2st row  
 \hline
P4\label{P4} &(2,2,2) & $\begin{aligned}
     f_1(x, \xi) :=& x_1^2 + \xi_2 x_2^2+\xi_1x_1^2+8\xi_2x_2 \\ f_2(x, \xi) :=&\xi_1(x_1 + 1)^2 + x_2 ^2+\xi_1x_1^2+\xi_2x_2^2
 \end{aligned}$ &   $(-5,-5)$ & $(50, 50)$& $(0.8,1)$ & $(1,1.2)$ & LRS1 \cite{32} \\[3ex]
\hline
P5\label{P5} &(2,2,2) & $\begin{aligned}
     f_1(x, \xi) :=& (\xi_1 x_1^2 -1)^2 +\xi_2(x_1-x_2)^2\\ f_2(x, \xi) :=&(\xi_2 x_2 -3)^2 +\xi_1(x_1-x_2)^2
 \end{aligned}$ &   $(-1, -1)$ & $(5, 5)$& $(1,1)$ & $(2,2)$ & SP1 \cite{32} \\[3ex]
\hline
P6\label{P6} &(2,2,2) & $\begin{aligned}
     f_1(x, \xi) :=& \frac{\xi_1}{x_1^2 + \xi_2 x_2^2 + 1}+\xi_1^2x_1^2+(\xi_2x_2-1)^2\\ f_2(x, \xi) :=&\frac{\xi_1 x_1^2 + 3 x_2^2+1 }{\xi_2}+2\xi_1x_1+2\xi_2x_2
 \end{aligned}$ &   $(-3, -3)$ & $(3, 3)$& $(1,0.5)$ & $(1.5,1)$ & VU1 \cite{32} \\[3ex]
 % Entering 2st row  
 \hline
 P7\label{P7} &(2,2,2) & $\begin{aligned}
     f_1(x, \xi) :=& \xi_2+x_1^2+(\xi_1x_1-2)^2+(\xi_2x_2-2)^2\\ f_2(x, \xi) :=& \xi_2+\xi_1^2+\xi_1x_1^2+\xi_2x_2^2+(\xi_1x_1-2)^3
 \end{aligned}$ &   $(-1, -1)$ & $(0, 0)$& $(1.2,0.6)$ & $(1.5,1)$ & New2  \\[3ex]
\hline
P8\label{P8} &(2,2,2) & $\begin{aligned}
     f_1(x, \xi) :=& -1.05\xi_1x_1^2-0.98\xi_2x_2^2+(\xi_1x_1-2)^2+(\xi_2x_2+2)^2  \\ f_2(x, \xi) :=&-0.99\xi_1(x_1-3)^2-1.03\xi_2(x_2-2.5)^2+5\xi_1x_1^2+\xi_2x_2^2
 \end{aligned}$ &   $(-3,-3)$ & $(5,5)$& $(-1.5,-1.5)$ & $(0,0)$ & Lovison1 \cite{40} \\[3ex]
\hline

P9\label{P9} &(2,3,3) & $\begin{aligned}
     f_1(x, \xi) :=& 2x_1^2+\frac{(\xi_1-3)}{2}+4x_2\frac{(\xi_2-3)}{2}  \\ f_2(x, \xi) :=&\frac{x_1^2}{4}\cos x_2 -\cos^3 x_3\frac{(\xi_1-3)}{2}+\xi_3
 \end{aligned}$ &   $(-100000,\ldots, -100000)$ & $(100000,\ldots, 100000)$& $(-2,\ldots,-2)$ & $(2,\ldots,2)$ & GKZ9 \cite{17} \\[3ex]
\hline
P10\label{P10} &(2,5,1) & $\begin{aligned}
     f_1(x, \xi) :=& x_1^2 + x_2^2+x_3^2 + x_4^2+x_5^2+\xi^2 \\ f_2(x, \xi) :=&(3+\xi) x_1 + 2x_2 -\tfrac{x_3}{3}+(x_4-x_5)^3+\xi x_1^2
 \end{aligned}$ &   $(-20,\ldots,-20)$ & $(20,\ldots,20)$& $0.008$ & $0.012$ & DD1 \cite{32} \\[3ex]
\hline
P11\label{11} &(2,20,20) & $\begin{aligned}
     f_1(x, \xi) :=& \frac{1}{n}\sum_{i=1}^{n}\xi^2_i x^2_i \\ f_2(x, \xi) :=&\frac{1}{n}\sum_{i=1}^{n}( x_i-2\xi_i)^2+\xi_i^2
 \end{aligned}$ &   $(-9,\ldots, -9)$ & $(-7,\ldots,-7)$& $(-2,\ldots,-2)$ & $(1,\ldots,1)$ & Jin1 \cite{33} \\[3ex]
\hline
P12\label{P12} &(3,2,2) & $\begin{aligned}
     f_1(x, \xi) :=& \frac{(\xi_1x_1-2)^2}{2}+\frac{(\xi_2x_2+1)^2}{13}+3\xi_1+x_1^2+x_2^2+\xi_1\xi_2 \\ f_2(x, \xi) :=&\frac{(\xi_1x_1+\xi_2x_2-3)^2}{36}+\frac{(-\xi_1x_1+\xi_2x_2+2)^2}{8}-17\xi_2+x_1+x_2+\xi_2\\
     f_3(x,\xi)=& \frac{(\xi_1x_1+2\xi_2x_2-1)^2}{175}+\frac{(-\xi_1x_1+2\xi_2x_2)^2}{17}-13\xi_1\xi_2+\xi_2x_1^2-9x_2
 \end{aligned}$ &   $(-400, -400)$ & $(400, 400)$& $(-2,-2)$ & $(1,1)$ & MOP7 \cite{32} \\[3ex]
\hline
P13\label{P13} &(3,2,2) & $\begin{aligned}
     f_1(x, \xi) :=& x_1 +(x_2-1.5) +\tfrac{1}{4}\sin\left(\frac{4\pi(\xi_1-1)}{7}\right)+\tfrac{1} {100}\xi_2+x_1^2\\ f_2(x, \xi) :=& 2(x_1-1)^2 +2x_2^2+\tfrac{1}{4}\cos\left(\tfrac{4\pi(\xi_2-1)}{7}\right)+\tfrac{2}{100}\xi_1+x_2^2\\
     f_3(x, \xi) :=& \xi_1x_1^2 +x_2^2+\xi_1\xi_2
 \end{aligned}$ &   $(1, 1)$ & $(1.5, 1.5)$& $(1,1)$ & $(1.5,2)$ & GKZ6 \cite{17} \\[3ex]
\hline
P14\label{P14} &(3,2,3) & $\begin{aligned}
     f_1(x, \xi) :=& \xi_1^2 x_1^2 +\xi_2(x_2-1)^2+x_3+\xi_1x_1^2+x_2^2\\ f_2(x, \xi) :=&x_1^2 +(\xi_1x_2-1)^2+x_1^4+\xi_2x_2^2+1\\
     f_3(x, \xi) :=& \xi_1(x_1 -1)^2 +\xi_2x_2^2+\xi_3+5\xi_1x_1+x_2+3
 \end{aligned}$ &   $(-2,-2)$ & $(2,2)$& $(1,0.5,-0.5)$ & $(1.5,1,2.5)$ & VFM1 \cite{32} \\[3ex]
\hline
P15\label{P15} &(3,2,3) & $\begin{aligned}
     f_1(x, \xi) :=& (\xi_1 x_1 -1)^2 +(\xi_2x_2-1)^2+\xi_1x_1^2+x_2^2\\ f_2(x, \xi) :=&(\xi_2 x_1 -1.5)^2 +(\xi_1x_2-1)^2+x_1^4+\xi_2x_2^2\\
     f_3(x, \xi) :=& \xi_2(x_1 -1)^2 +\xi_1(\xi_2x_2-1)^2+\xi_3+5\xi_1x_1+x_2
 \end{aligned}$ &   $(-4, -4)$ & $(4, 4)$& $(-2.5,-2.5,-0.5)$ & $(2.5,2.5,2.5)$ & MHHM2 \cite{23} \\[3ex]
\hline
P16\label{P16} &(3,10,10) & $\begin{aligned}
     f_1(x, \xi) :=& \sum_{i=1}^{n}\xi_i x_i +\xi_i^2\\
    f_2(x, \xi) :=& 1+9 \sum_{i=1}^{n}\xi_i^2 x_i\\
    f_3(x, \xi) :=& 1-\sqrt{\frac{f_1(x, \xi)}{f_2(x, \xi)}}
 \end{aligned}$ &   $(0.2,\ldots,0.2)$ & $(0.8,\ldots,0.8)$& $(-1,\ldots,-1)$ & $(1,\ldots,1)$ & ZDT1 \cite{35} \\[3ex]
\hline
P17\label{P17} &(3,10,10) & $\begin{aligned}
     f_1(x, \xi) :=& \sum_{i=1}^{n}\xi_i x_i \\
    f_2(x, \xi) :=& 5+10 \sum_{i=1}^{n}\xi_i x_i^2\\
    f_3(x, \xi) :=& 2-\left(\frac{f_1(x, \xi)}{f_2(x, \xi)}\right)^2
 \end{aligned}$ &   $(0.2,\ldots,0.2)$ & $(0.6,\ldots,0.6)$& $(-0.8,\ldots,-0.8)$ & $(0.7,\ldots,0.7)$ & ZDT2 \cite{35}  \\[3ex]
\hline
 P18\label{P18} &(3,10,10) & $\begin{aligned}
     f_1(x, \xi) :=& \tfrac{1}{n}\sum_{i=1}^{n}i(\xi_i x_i-i)^4 \\ f_2(x, \xi) :=&\exp\left(\sum_{i=1}^{n}\tfrac{x_i}{n}\right)\\
     f_3(x,\xi)=& \tfrac{1}{n(n+1)}\sum_{i=1}^{n}i(n-i+1)e^{-\xi_i x_i}
 \end{aligned}$ &   $(-2,\ldots, -2)$ & $(2,\ldots, 2)$& $(-1,\ldots,-1)$ & $(2,\ldots,2)$ & FDS \cite{34} \\[3ex]
\bottomrule
\end{tabular}}
%} 
\end{table}

\begin{table}[h!]
\centering
 \caption{Performance of Algorithm \ref{QNM_algoritm} for all problems given in Table \ref{test_problem} }\label{Table6_performance}
\resizebox{10cm}{!}{
\begin{tabular}{c c c c c c c c}
\toprule
\multicolumn{1}{c}{Problem} &  \multicolumn{1}{c}{Senarios} & \multicolumn{3}{c}{\text{Time}} & \multicolumn{3}{c}{\text{Iterations count}}\\
\cmidrule(rl){3-5} \cmidrule(rl){6-8}
 &  & \text{min} & \text{mean} & \text{max} & \text{min} & \text{mean} & \text{max}\\
\midrule 
P1 & 40 & 0.0629 & 0.5031 & 1.0282 & 0 & 4.76 & 9 \\
P2 & 35 & 1.5480 & 1.6323 & 1.9709 & 3 & 3.98 & 4 \\
P3 & 40 & 0.3044 & 1.2164 & 2.5341 & 0 & 1.60 & 7 \\
P4 & 40 & 0.5314 & 2.8990 & 62.3406 & 2 & 11.70 & 256 \\
P5 & 25 & 0.6717 & 5.1853 & 119.0370 & 1 & 8.6531 & 198 \\
P6 & 30 & 2.1577 & 2.2529 & 3.2651 & 2 & 2 & 2 \\
P7 & 20 & 0.8279 & 6.2513 & 73.1986 & 1 & 9 & 98 \\
P8 & 40 & 0.2294 & 12.8858 &  117.0456 & 0 & 16.10 & 149 \\
P9 & 40 & 1.609 & 99.1422 & 407.6247 & 7 & 106 & 911 \\
P10 & 25 & 14.2015 & 58.6909 & 144.2981 & 39 & 76.7234 & 155 \\
P11 & 20 & 23.8567 & 25.9025 & 30.3595 & 7 & 7.0392 & 8 \\
P12 & 30 & 0.4093 & 20.6827 & 202.9008 & 1 & 30.4583 & 313 \\
P13 & 30 & 2.9827 & 4.8822 & 10.6499 & 2 & 4.36 & 8 \\
P14 & 20 & 0.4212 & 2.3219 & 26.9343 & 1 & 8.58 & 136 \\
P15 & 40 & 0.1858 & 1.9971 & 28.8318 & 0 & 4.62 & 89 \\
P16 & 10 & 1.6223 & 4.0545 & 15.1023 & 0 & 2.53 & 12 \\
P17 & 10 & 0.6223 & 3.0545 & 16.1023 & 1 & 1.54 & 10 \\
P18 & 10 & 0.2532 & 23.6702 & 221.7030 & 0 & 30.5319 & 297 \\

\bottomrule
\end{tabular}} 
\end{table}

\begin{landscape}
\begin{figure}
\centering

\subfloat[Problem P1]
{\includegraphics[width=0.38\textwidth]{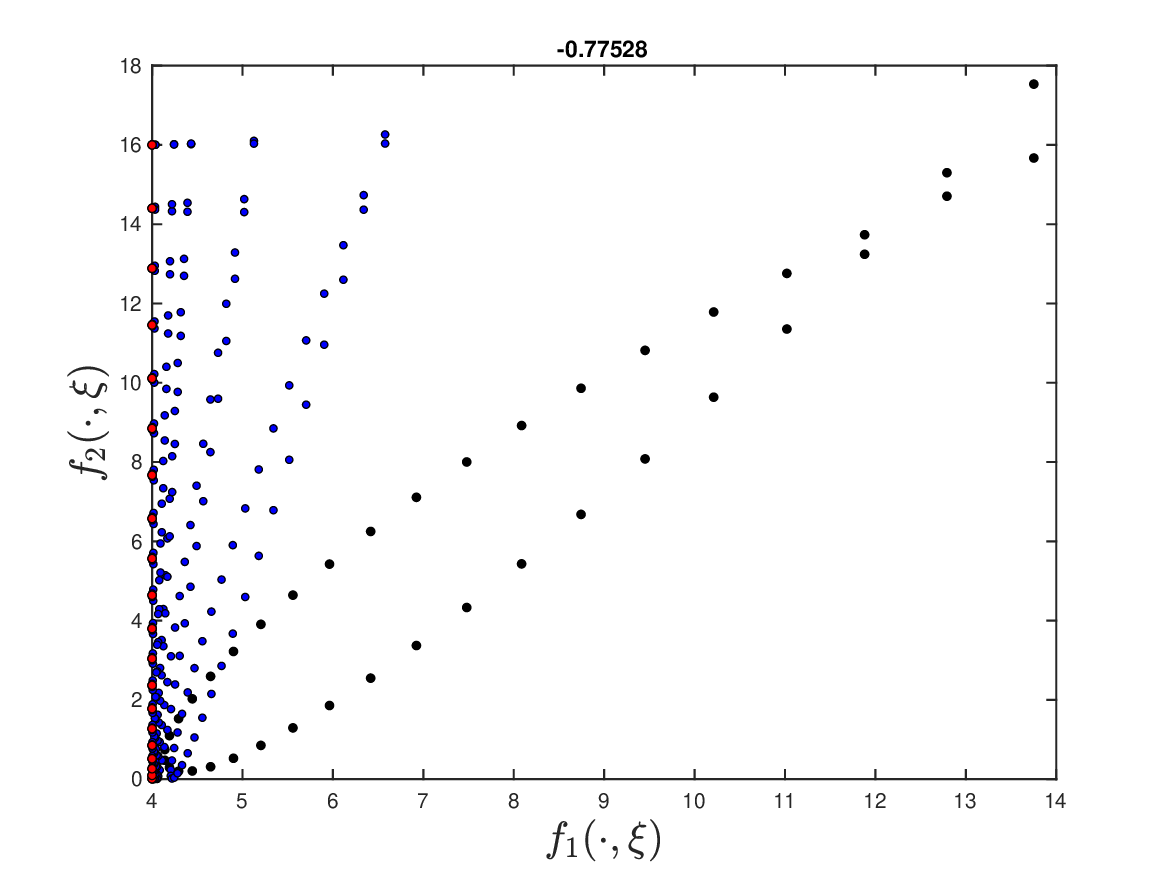}}
\hspace{0.01cm}
\subfloat[Problem P3]
{\includegraphics[width=0.38\textwidth]{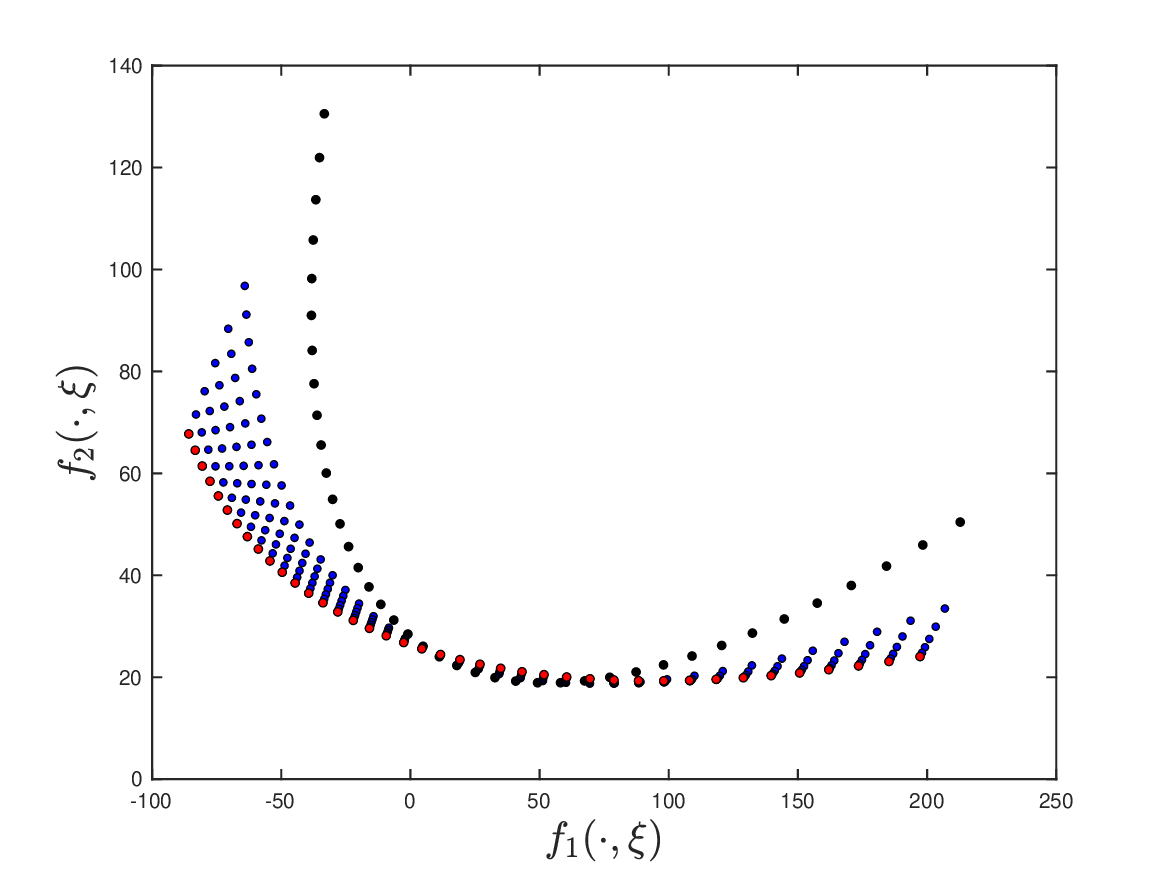}}
\hspace{0.05cm}
\subfloat[Problem P5]
{\includegraphics[width=0.38\textwidth]{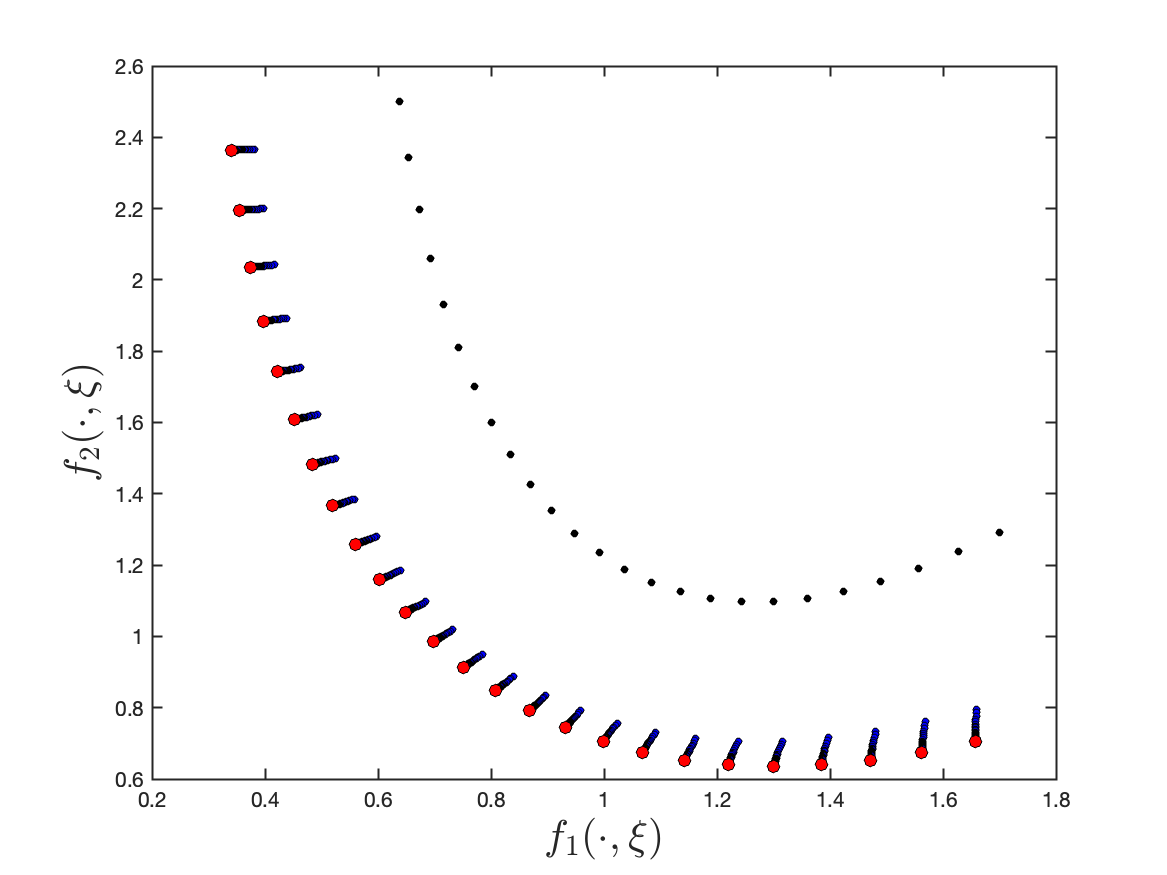}}\\
\subfloat[Problem P6]
{\includegraphics[width=0.38\textwidth]{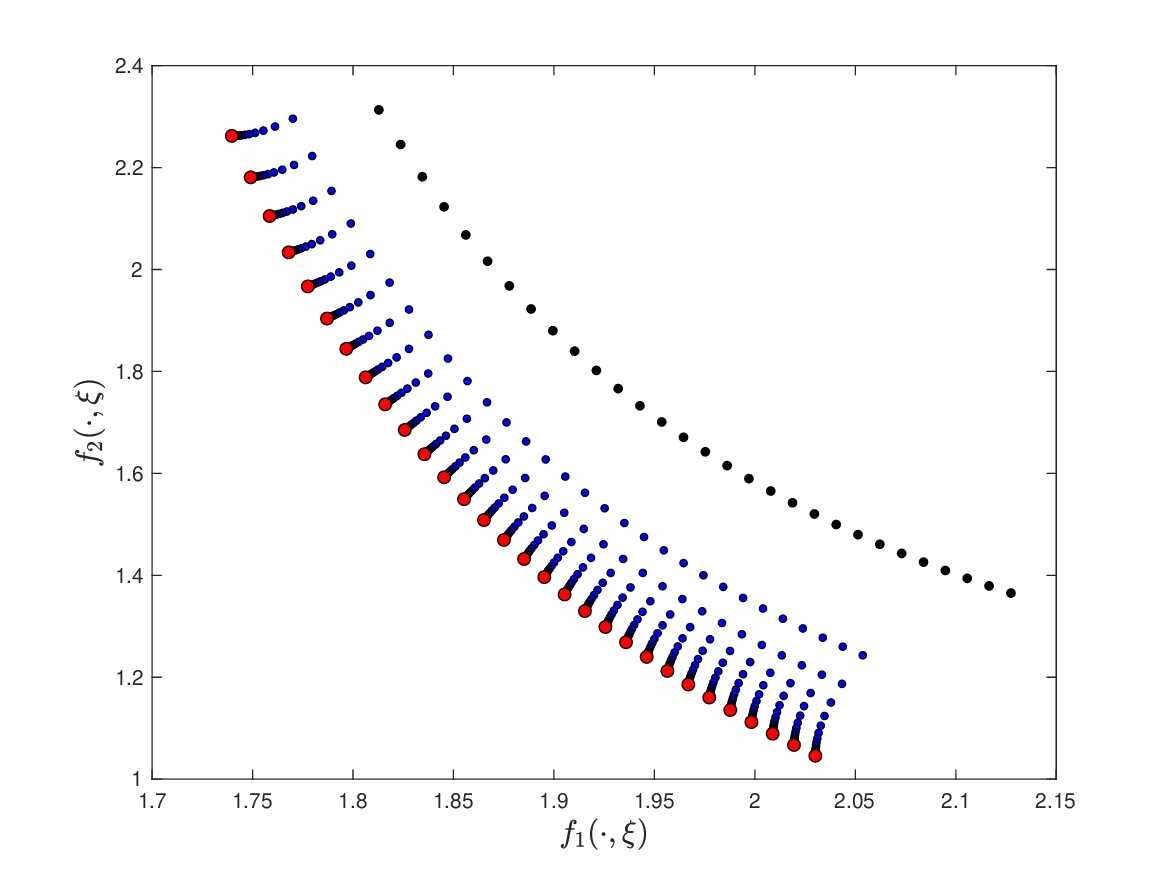}}
\hspace{0.05cm}
\subfloat[Problem P7]
{\includegraphics[width=0.38\textwidth]{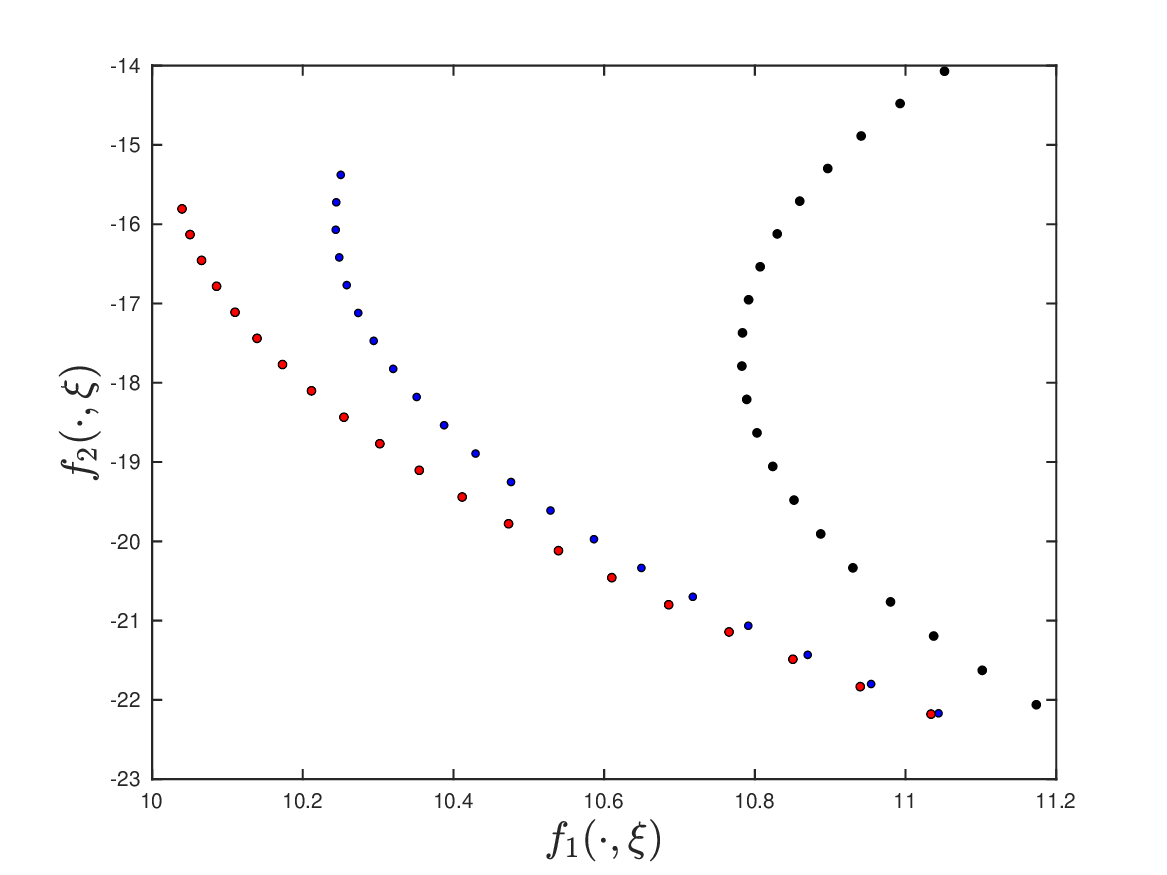}}  
\hspace{0.05cm}
\subfloat[Problem P8]
{\includegraphics[width=0.38\textwidth]{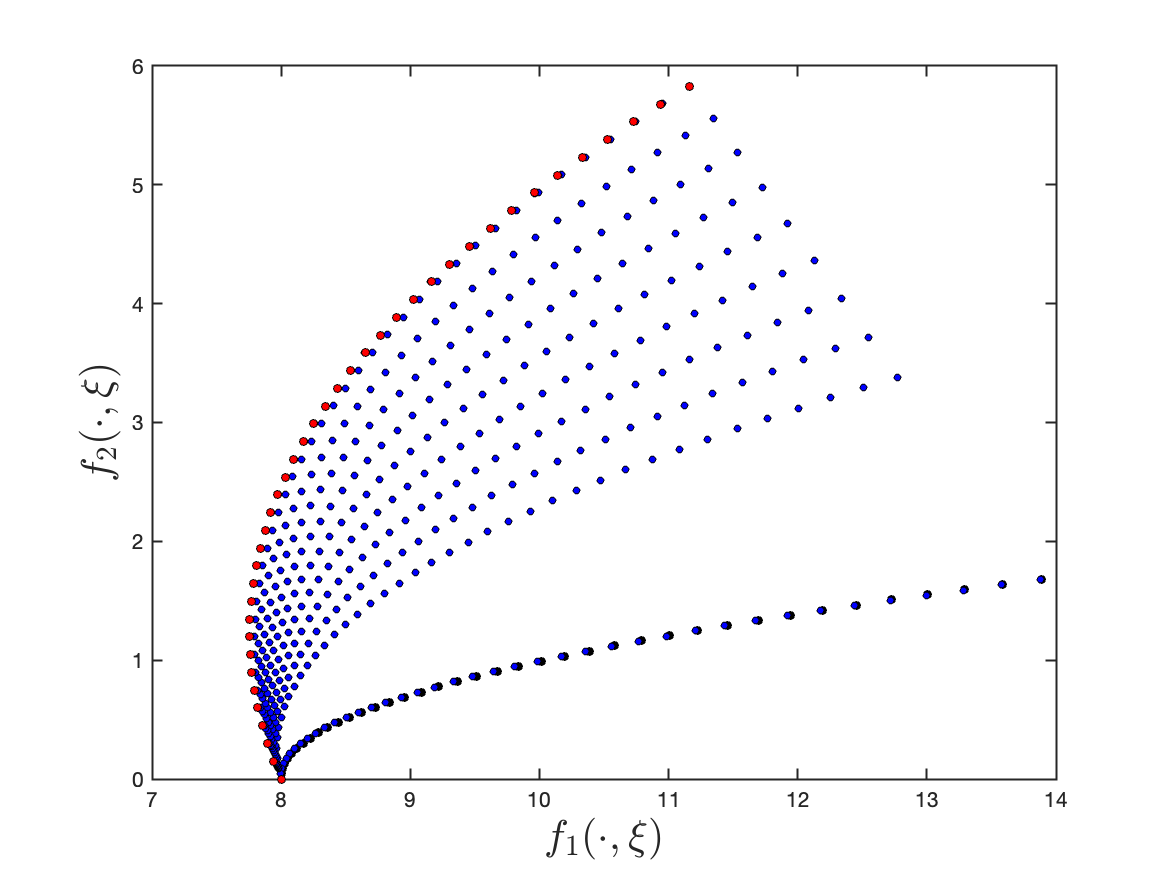}}\\
\subfloat[Problem P12]
{\includegraphics[width=0.38\textwidth]{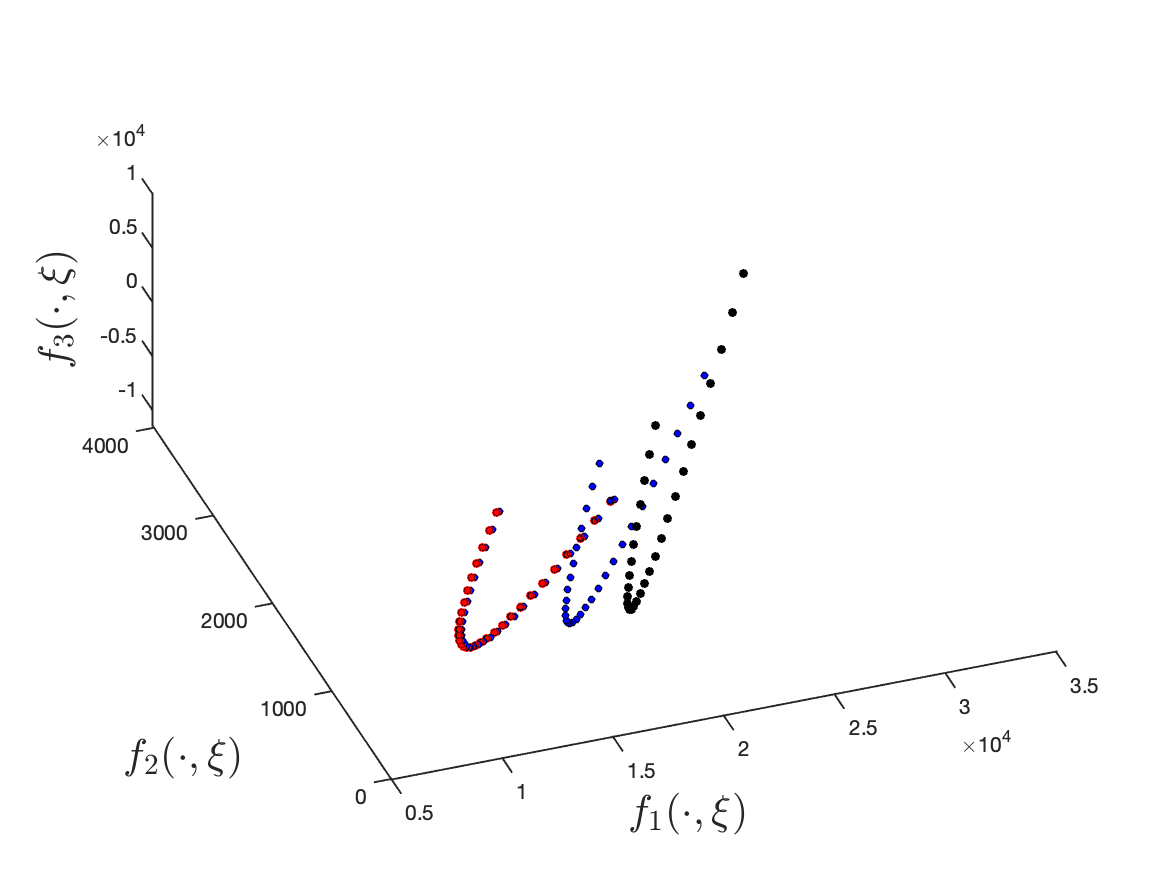}}
\hspace{0.05cm}
\subfloat[Problem P13]
{\includegraphics[width=0.38\textwidth]{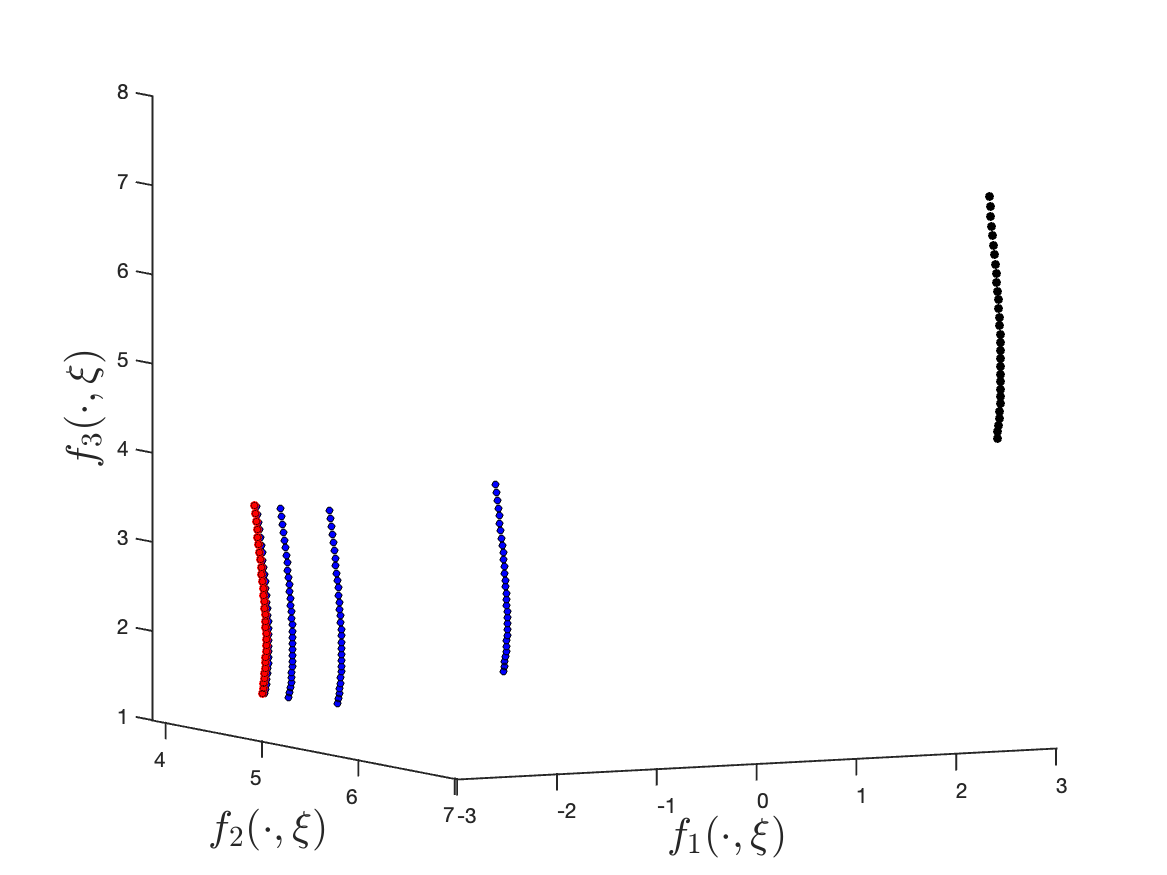}}
\hspace{0.05cm}
\subfloat[Problem P15]
{\includegraphics[width=0.38\textwidth]{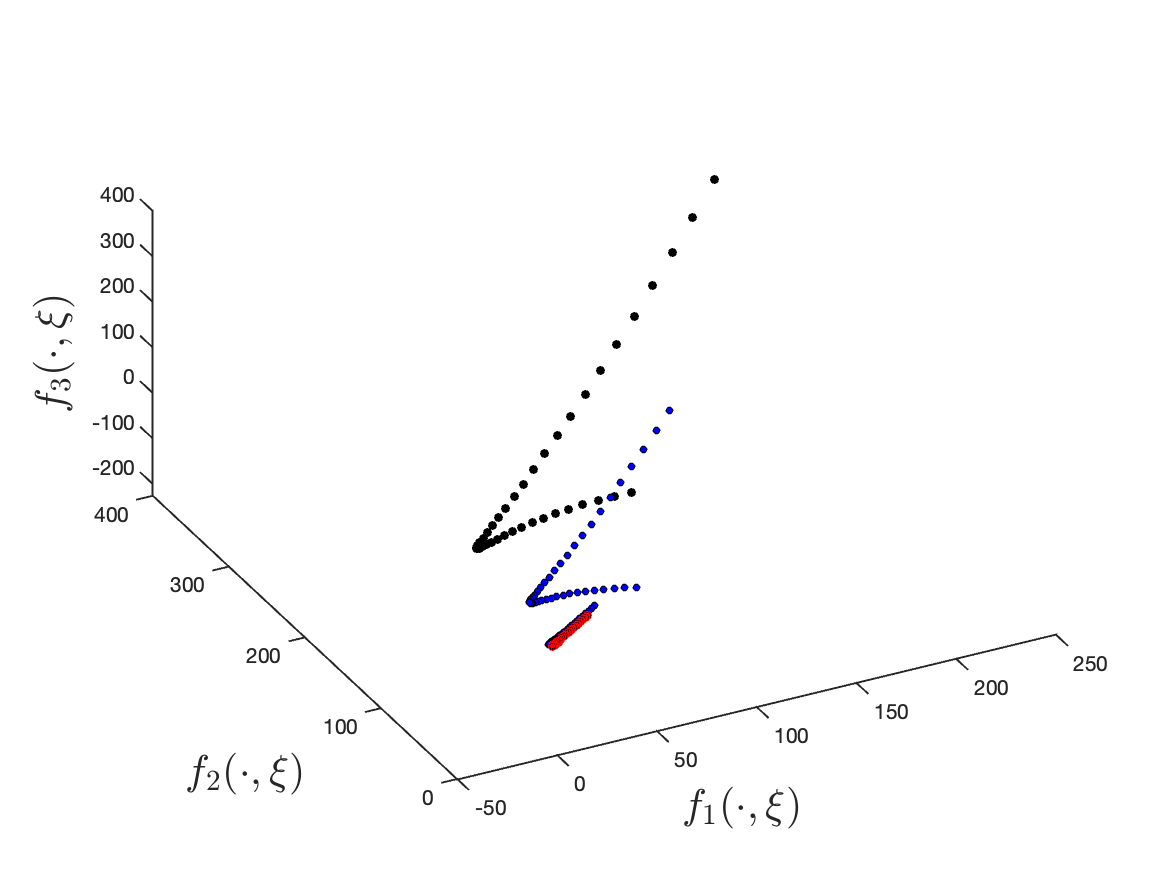}}
\hspace{0.05cm}
\caption{Robust weakly efficient points generated by Algorithm \ref{QNM_algoritm} for a few test problems from Table \ref{Table5}}\label{iterations_on_test_problems}
\end{figure}
\end{landscape}

\section{Conclusion}\label{section 6}
In this paper, we have introduced a quasi-Newton method to determine weakly robust efficient solutions for UVOPs with an uncertainty set of finite cardinality. We adopted a set-valued optimization perspective to reformulate the problem as a deterministic one. The deterministic set optimization problem was structured so that its efficient solutions, under the upper set less relation, correspond to the robust efficient solutions of the original UVOP. Utilizing the concept of the partition set, we developed a class of VOPs, which yields efficient solutions to the set optimization problem, thereby identifying robust efficient solutions for the UVOP. In the process of finding a weakly minimal solution of the UVOP, we have approximated the Hessian matrices corresponding to the given 
functions with the help of BFGS methods for vector optimization. To generate a sequence of $\beta_k$ from the partition set $P_{x_k}$ of the current iteration at $x_k$, we evaluated quasi-Newton direction $p_k$ (Step 2) with the help of concepts in \cite{27}. The process of generating iterates by Algorithm \ref{QNM_algoritm} continued until the stopping condition (Step 4) was met, and if not, then we find a step length $\tau_k$ in (Step 5) to progress to the next iterate. It has been established that if a weakly robust efficient point is a regular point, the method’s sequence exhibits global convergence (Theorem \ref{4.4}) and has a local superlinear convergence rate (see Theorem \ref{quadratic}).

In future research, we aim to explore the following directions:
\begin{itemize}
\item Extending these results and methods with respect to the other set less relations and through different scalarization functions. 
\item The goal is to create techniques for solving UMOP with uncertainty sets that are either infinite or continuous in nature.
\item Deriving results without requiring regularity assumption on the accumulation points of the sequence generated by the proposed algorithm. 
%\item The proposed work can be tested for practical problems. 
\end{itemize}

\vskip 6mm
\noindent{\bf Acknowledgments}

Debdas Ghosh acknowledges the research grant CRG (CRG/2022/001347) from SERB, India to carry out this research work. Krishna Gupta expresses gratitude to a research fellowship from IIT (BHU), Varanasi.

% \bibliographystyle{unsrt}
% \bibliography{biblio}

\end{document}